\newtheorem{thm}{Theorem}[section]
\newtheorem{lem}[thm]{Lemma}
\newtheorem{cor}[thm]{Corollary}
\newtheorem{prop}[thm]{Proposition}
\numberwithin{equation}{section}
\numberwithin{thm}{section}
\theoremstyle{definition}
\newtheorem{definition}[thm]{Definition}
\newtheorem{observation}[thm]{Observation}
\newcommand{\C}{\mathbb{C}}
\newcommand{\R}{\mathbb{R}}
\newcommand{\RH}{\widehat{\R}}
\newcommand{\D}{\mathbb{D}}
\newcommand{\T}{\partial\D}
\newcommand{\ClD}{\overline{\D}}
\newcommand{\N}{\mathbb{N}}
\newcommand{\U}{\mathbb{H}}
\newcommand{\A}{\mathcal{S}}
\newcommand{\K}{\mathcal{K}}
\newcommand{\p}{\varphi}
\newcommand{\pt}{\widetilde{\p}}
\newcommand{\pet}{\widetilde{\p_e}}
\newcommand{\s}{\sigma}
\newcommand{\st}{\widetilde{\s}}
\newcommand{\ol}{\overline}
\newcommand{\modK}{\quad \text{(mod $\K$)}}
\newcommand{\CF}{Carath\' eodory-Fej\' er }
\newcommand{\Pick}{\mathcal{P}}
\newcommand{\Pz}{\Pick_0}
\newcommand{\dCFP}{$\partial CF \mathcal{P}$}
\newcommand{\fdbf}{\cref{Faa di Bruno}(\nameref{Faa di Bruno})}
\DeclareMathOperator{\im}{Im}
\DeclareMathOperator{\re}{\operatorname{Re} }
\begin{document}

\title[Essentially Normal Composition Operators on $H^2$]{Essentially Normal Composition Operators on $H^2$}
\author{Mor Katz}
\address{Department of Mathematics, University of Virginia, PO Box 400137, Charlottesville, VA 22904-4137, United States}
\date{March 17, 2015}

\begin{abstract}
We prove a simple criterion for essential normality of composition operators on the Hardy space induced by maps  in a reasonably large class $\A$ of analytic self-maps of the unit disk.
By combining this criterion with boundary Carath\' eodory-Fej\' er  interpolation theory, we exhibit a parametrization for all rational self-maps of the unit disk which induce essentially normal composition operators.
\end{abstract}

\email{md3ny@virginia.edu}
\subjclass[2000]{Primary  47B33}
\thanks{\textit{Keywords:} composition operator, essentially normal, Hardy space}
\maketitle

\section{Introduction}
For $\p$ an analytic self-map of the unit disk $\D$, the composition operator $C_\p \colon f \to f \circ \p$ induced by $\p$ is a bounded operator on the Hardy space $H^2$. A bounded operator $A$ is said to be essentially normal if its self-commutator $[A^*, A] = A^*A - AA^*$ is a compact operator, and trivially essentially normal if $A$ is either normal ($[A^*, A] = 0$) or compact. 
Normal composition operators on $H^2$ were characterized by Schwartz \cite{MR2618707} and compact composition operators by Shapiro \cite{MR881273} and, via a different criterion, by Sarason \cite{MR1044808} and Shapiro-Sundberg \cite{MR994787}; see also Cima-Matheson \cite{MR1452525}.
In \cite{MR1972190}, Bourdon-Levi-Narayan-Shapiro characterize the class of linear fractional self-maps of $\D$ that induce non-trivially essentially normal composition operators; these maps are exactly the parabolic non-automorphisms of $\D$. These authors provide additional examples of essentially normal composition operators induced by maps which, like linear fractional non-automorphisms of $\D$, have order of contact $2$ with $\T$ at one point.
To the best of the author's knowledge, no non-trivially essentially normal composition operators with inducing maps having order of contact $n > 2$ with $\T$ were known prior to the present work.

In this paper we prove a simple criterion (\cref{ess normality criterion} below) for essential normality of composition operators induced by maps $\p$ in the class $\A$ introduced by Kriete-Moorhouse in \cite{LRC}. Roughly speaking, the class $\A$ consists of analytic self-maps $\p$ of $\D$ that have ``significant contact'' with $\T$ at only a finite number of points, with $\p$ having ``sufficient derivative data'' at every such point. As a corollary, we show that for a self-map $\p$ of $\D$ which extends analytically to a neighborhood of $\ClD$, $C_\p$ is non-trivially essentially normal if and only if $\p$ fixes one point of $\T$, has derivative equal to $1$ there, and maps the rest of $\T$ into $\D$. Note that for the linear fractional case this criterion is equivalent to the characterization described above. Our criterion, in conjunction with boundary \CF interpolation theory, yields a parametrization for all rational self-maps $\p$ of $\D$ that induce non-trivially essentially normal $C_\p$ on $H^2$.

We rely on results from three distinct areas, presented in Sections 3-5. First, we explore a special case of a boundary version of the \CF problem studied by Agler-Lykova-Young in \cite{CFP, MR2834887}. Second, we discuss relations in the Calkin Algebra using results by Kriete-Moorhouse \cite{LRC}. In particular, we derive a decomposition of a composition operator modulo the ideal $\K$ of compact operators into a sum of composition operators induced by ``basic'' rational functions (\cref{sum decomposition into basics}). Third, using formulas and ideas from Bourdon-Shapiro \cite{BS}, based on work of Cowen-Gallardo \cite{MR2253727} and Hammond-Moorhouse-Robbins \cite{MR2394110}, we obtain an operator formula for $C_{\psi}C_\p^*$ where $\p$ is rational and $\psi$ is an auxiliary map, and reduce this formula modulo $\K$. Additionally, Fa\`a di Bruno's formula, an identity generalizing the chain rule, plays a significant role.

The author thanks her advisor, Thomas Kriete, for sharing his vision and for his continuous guidance, and Paul Bourdon for his insightful suggestions. She also thanks Vladimir Bolotnikov for sharing his ideas about boundary interpolation on the unit disk.

\section{Preliminaries}

\subsection{Generalized Chain Rule - Fa\`a di Bruno's Formula} \label{Fa di Bruno section}
$ $

Fa\`a di Bruno's formula is an identity generalizing the chain rule that has been known since 1800. The following is the statement of the formula in combinatorial form.

\begin{thm}[Fa\`a di Bruno's formula]\cite{MR1903577}\label{Faa di Bruno}
If $g$ is analytic at $z$ and $f$ is analytic at $g(z)$, then
\begin{align} \label{Faa di Bruno less useful formula}
(f\circ g)^{(k)}(z)=\sum_{\pi\in\Pi} f^{(\left|\pi\right|)}(g(z))\cdot\prod_{B\in\pi}g^{(\left|B\right|)}(z),
\end{align}
where $\Pi$ is the set of partitions of \{$1$, ..., $k$\}.
\end{thm}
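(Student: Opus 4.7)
The plan is to prove Faà di Bruno's formula by induction on the order of differentiation $k$. The base case $k = 1$ reduces to the ordinary chain rule: the unique partition of $\{1\}$ is $\pi = \{\{1\}\}$, with $|\pi| = 1$ and a single block of size one, so the right-hand side of \eqref{Faa di Bruno less useful formula} collapses to $f'(g(z)) \cdot g'(z) = (f \circ g)'(z)$, as required.

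For the inductive step, assume the identity holds for some $k \geq 1$, and differentiate both sides with respect to $z$. On the left this produces $(f \circ g)^{(k+1)}(z)$. On the right, for each partition $\pi$ of $\{1, \ldots, k\}$, combining the product rule with the chain rule yields two kinds of contributions: (a) a term in which $f^{(|\pi|)}(g(z))$ itself is differentiated, producing an extra factor of $g'(z)$ and raising the order of $f$ to $|\pi| + 1$; and (b) for each block $B' \in \pi$, a term in which the factor $g^{(|B'|)}(z)$ is promoted to $g^{(|B'|+1)}(z)$, with the remaining factors and the total number of blocks left unchanged.

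The key step is then the combinatorial reindexing that matches these contributions with partitions of $\{1, \ldots, k+1\}$. Every such partition $\pi'$ arises uniquely from some partition $\pi$ of $\{1, \ldots, k\}$ in exactly one of two ways: either by adjoining $\{k+1\}$ as a new singleton block — yielding a partition with one additional block of size one, matching case (a) — or by inserting $k+1$ into an existing block $B' \in \pi$, which enlarges that block by one but preserves the number of blocks, matching case (b). Summing the differentiated right-hand side across both cases and over all $\pi \in \Pi_k$ therefore reproduces the desired sum over all $\pi' \in \Pi_{k+1}$, completing the induction.

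The main obstacle is the combinatorial bookkeeping in the inductive step: one must verify that the correspondence above is a bijection, and that it transports $|\pi'|$ and the block sizes $|B|$ of $\pi'$ correctly onto the exponents of $f$ and $g$ appearing in each differentiated term. Once this is checked, the remainder of the argument is a routine application of the product rule. An alternative route would be via exponential generating functions and Bell polynomials, which gives a cleaner non-inductive derivation, but the inductive approach above is self-contained and better suited to being quoted as a black box later in the paper.
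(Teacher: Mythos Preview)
Your inductive argument is correct and is the standard proof of Fa\`a di Bruno's formula in its combinatorial form; the bijection you describe between partitions of $\{1,\ldots,k+1\}$ and pairs consisting of a partition of $\{1,\ldots,k\}$ together with a choice of ``where to put $k+1$'' is exactly what is needed, and the block counts and sizes transport as you claim.

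That said, the paper does not actually prove this theorem: it is stated with a citation and used as a black box throughout. So there is no ``paper's own proof'' to compare against. Your write-up would serve perfectly well as a self-contained justification if one were desired, though given that the result is classical and well documented in the cited reference, the paper's choice to quote it without proof is also entirely appropriate.
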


We define the $n^{th}$ order data of a function $h$ at a point $z$ to be the vector
\[ D_n (h, z) = (h(z), h'(z), ..., h^{(n)}(z)).\]
Note that as a consequence of \nameref{Faa di Bruno}, $D_k(f \circ g, z)$ is determined by $D_k(g, z)$ and $D_k(f, g(z))$ when $g$ is analytic at $z$ and $f$ is analytic at $g(z)$. Furthermore, in this case we can rewrite \cref{Faa di Bruno less useful formula} as
\begin{align}  \label{useful Faa di Bruno}
(f \circ g)^{(k)}(z) = f^{(k)}(g(z))g'(z)^k + F(D_{k-1}(f, g(z)), D_{k-1}(g,z)) + f'(g(z))g^{(k)}(z),
\end{align}
where the first term originates from the partition $\pi = \{ \{1\}, \{2\},...,\{k\} \}$, the last term originates from the partition $\pi = \{ \{1, 2,...,k \} \}$, and $F$ is defined by
\[ F((a_0, a_1, ..., a_{k-1}), (b_0, b_1, ..., b_{k-1}))
= \sum_{\substack{ \pi \in \Pi \\ 1 < |\pi| < k }} a_{|\pi|}\cdot\prod_{B\in\pi}b_{|B|}. \]

\subsection{Order of Contact} \label{order of contact subsection}
$ $

Our treatment of order of contact is similar to that in \cite{LRC}. We define the notion of order of contact with the boundary both in the context of the unit disk $\D$ and in the context of the upper half-plane $\U$.

\begin{definition} Let $V$ be a neighborhood of some $\zeta \in \T$ and let $\p$ be analytic on $V \cap \D$ satisfying $\p(V \cap \D) \subset \D$. We say that $\p$ has contact with $\T$ at $\zeta$ of order $c > 0$ if the following conditions hold.
\begin{enumerate}
\item $\displaystyle \p(\zeta) := \lim_{z \to \zeta, z \in V \cap \D} \p(z)$ exists and $\p(\zeta) \in \T$.
\item $\dfrac{1-|\p(e^{i\theta})|^2}{|\p(\zeta) - \p(e^{i\theta})|^c}$

is essentially bounded above and away from zero as $e^{i\theta}\to\zeta$.
\end{enumerate}

A conformal mapping argument together with an application of Fatou's theorem shows that the non-tangential boundary values $\p(e^{i\theta})$ exist a.e.\ on $V \cap \T$, and similarly for the half-plane definition below.
\end{definition}

\begin{definition} Let $W$ be a neighborhood of $0$ and let $f$ be analytic on $W \cap \U$ satisfying $f(W \cap \U) \subset \U$. We say that $f$ has contact with $\R$ at $0$ if the following conditions hold.
\begin{enumerate}
\item $\displaystyle f(0) := \lim_{z \to 0, z \in W \cap \U} f(z)$ exists and $f(0) \in \R$.
\item $\dfrac{\im f(x)}{|f(0) - f(x)|^c}$

is essentially bounded above and away from zero as $x \to 0$ in $\R$.
\end{enumerate}
\end{definition}

We transfer $\D$ to $\U$ using the family of conformal maps $\tau_\alpha \colon \D \to \U$ for $\alpha \in \T$, defined by
$ \tau_\alpha \colon z \mapsto i\frac{\alpha-z}{\alpha+z}$. Note that
$\p$ has order of contact $c$ with $\T$ at $\zeta$ if and only if
$ f = \tau_{\p(\zeta)} \circ \p \circ \tau_\zeta^{-1}$ has order of contact $c$ with $\R$ at $0$.

Order of contact is more easily understood in the context of the upper half-plane. To gain some intuition, suppose that $f$ maps $W \cap \U$ into $\U$ for some neighborhood $W$ of $0$ and is analytic at $0$ with Taylor expansion $f(z) = \sum_{k=0}^{\infty}a_k z^k$ there. By taking imaginary parts, we see that $f$ maps an interval in $\R$ containing $0$ into $\R$ if and only if all the coefficients $a_k$ are real. Otherwise, for $n = \min \{ k \colon \im a_k \neq 0 \}$ we have that $0 < \im f(x) \sim \im(a_n) x^n$ as $x \to 0$ in $\R$. We see that $n$ must be even, $\im a_n >0$, and 
\[ \dfrac{\im f(x)}{|f(0) - f(x)|^n} \sim \dfrac{\im(a_n) x^n}{|a_1|^n x^n} = const,\]
so that $f$ has order of contact $n$ with $\R$ at $0$ (see discussion in \cite{LRC}).

We use Fa\`a di Bruno's Formula to determine the order of contact of composite maps which extend analytically to a neighborhood of the point of contact.

\begin{prop}\label{contact for composite} Let $f_1$ and $f_2$ be functions mapping $W \cap \U$ into $\U$ for some neighborhood $W$ of $0$ that are analytic at $0$, and suppose that $f_1$ fixes $0$. Then the following statements hold.
\begin{enumerate}
\item If for $i = 1, 2$, $f_i$ has order of contact $N_i$ with $\R$ at $0$, then $f_2 \circ f_1$ has order of contact equal to $\min (N_1, N_2)$ with $\R$ at $0$.
\item If one of $f_1$ and $f_2$ has order of contact $N$ with $\R$ at $0$ and the other maps an interval of $\R$ containing $0$ into $\R$, then $f_2 \circ f_1$ has order of contact $N$ with $\R$ at $0$.
\end{enumerate}
\end{prop}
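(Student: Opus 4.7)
My plan is to compare Taylor coefficients via Fa\`a di Bruno's formula. Write $f_1(z) = \sum_{k \geq 1} a_k z^k$ (so $a_0 = 0$ since $f_1(0) = 0$), $f_2(z) = \sum_{k \geq 0} b_k z^k$ with $b_0 = f_2(0) \in \R$, and $(f_2 \circ f_1)(z) = \sum_{k \geq 0} c_k z^k$. The discussion preceding the statement identifies the order of contact at $0$ of any function $h$ analytic at $0$ and satisfying the half-plane-mapping hypothesis (with $h(0) \in \R$) as the least index $k$ (necessarily even) such that $\im h^{(k)}(0) \neq 0$; when no such index exists, $h$ carries a real neighborhood of $0$ into $\R$. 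So the task reduces to locating the first $k$ with $\im c_k \neq 0$.

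The key observation is that \nameref{Faa di Bruno} expresses each $c_k$ as a polynomial with nonnegative integer coefficients in $a_1, \ldots, a_k, b_1, \ldots, b_k$, so $\im c_k$ is a real-linear combination of $\im a_j$ and $\im b_j$ for $j \leq k$. I will also need $a_1 = f_1'(0) > 0$ and $b_1 = f_2'(0) > 0$. If the least-index nonzero coefficient of $f_1$ had index $m \geq 2$, then $f_1(re^{i\theta}) \sim a_m r^m e^{im\theta}$ would force $\sin(\arg a_m + m\theta) > 0$ on the full interval $\theta \in (0, \pi)$, which is impossible since $\sin(m\theta)$ changes sign on $(0, \pi)$ for $m \geq 2$; so $a_1 \neq 0$, and for $m = 1$ the same constraint pins $\arg a_1 = 0$. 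Applying this argument to $\widetilde{f}_2(z) = f_2(z) - f_2(0)$ (which maps $W \cap \U$ into $\U$ because $f_2(0) \in \R$) gives $b_1 > 0$.

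For part (i) set $N = \min(N_1, N_2)$. For $1 \leq k < N$, every $a_j, b_j$ with $j \leq k$ is real (since $k < N_1$ and $k < N_2$), so $c_k \in \R$. For $k = N$, I invoke \eqref{useful Faa di Bruno} with $f = f_2$, $g = f_1$, $z = 0$ (using $f_1(0) = 0$):
\[ N!\, c_N = N!\, b_N a_1^N + F\bigl(D_{N-1}(f_2, 0), D_{N-1}(f_1, 0)\bigr) + N!\, b_1 a_N. \]
The middle term depends only on the real quantities $a_1, \ldots, a_{N-1}, b_0, \ldots, b_{N-1}$ and hence is real. So $\im c_N = a_1^N \im b_N + b_1 \im a_N$. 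A three-case analysis ($N_1 < N_2$, $N_1 > N_2$, $N_1 = N_2$) combined with $a_1, b_1 > 0$ shows each summand is nonnegative and at least one is strictly positive, so $\im c_N > 0$, giving the desired order of contact.

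Part (ii) is a direct specialization of the same computation. If all $b_j$ are real (the case $f_2$ takes a real interval into $\R$), then $c_k$ is real whenever $a_1, \ldots, a_k$ are, and $\im c_N = b_1 \im a_N > 0$; the symmetric case is analogous with $\im c_N = a_1^N \im b_N > 0$. I expect the only subtle step to be the strict positivity of $a_1$ and $b_1$; the rest is routine bookkeeping with Fa\`a di Bruno's formula.
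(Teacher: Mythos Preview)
Your proof is correct and follows essentially the same route as the paper's: both arguments use \nameref{Faa di Bruno} (specifically the decomposition in \eqref{useful Faa di Bruno}) to show the composite has real Taylor coefficients below index $K = \min(N_1, N_2)$ and then compute $\im c_K = a_1^K \im b_K + b_1 \im a_K > 0$. The only difference is that you supply an explicit argument for $a_1, b_1 > 0$, whereas the paper simply asserts this fact.
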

\begin{proof}
Let $f_1(z) = \sum_{j=0}^{j=\infty} a_j z^j$ and $f_2(z) = \sum_{j=0}^{j=\infty} b_j z^j$ be the Taylor series of $f_1$ and $f_2$ about $0$. Let $N_1 = \min \{ k \colon \im a_k \neq 0 \}$ and $N_2 = \min \{ k \colon \im b_k \neq 0 \}$, allowing for $\infty$ in the case where all coefficients are real. Let $K = \min \{ N_1, N_2 \}$, so that $K$ is finite under the assumption of either of the statements we aim to prove.

Since $a_1, ..., a_{K-1}, b_1, ..., b_{K-1} \in \R$, using \fdbf ~and induction, we get that ${(f_2 \circ f_1)'(0)}, ..., {(f_2 \circ f_1)^{(K-1)}(0)}$ are all real valued. By \cref{useful Faa di Bruno} at $z=0$ we have
\[ (f_2 \circ f_1)^{(K)}(0)
= f_2^{(K)}(0) f_1'(0)^K + F(D_{K-1}(f_2, 0), D_{K-1}(f_1,0)) + f_2'(0)f_1^{(K)}(0),\]
and taking imaginary parts we get
\[ \im (f_2 \circ f_1)^{(K)}(0) = f_1'(0)^K \im f_2^{(K)}(0) + f_2'(0) \im f_1^{(K)}(0). \]
Since $a_1, b_1 > 0$, and $\im a_K, \im b_K \geq 0$ with at least one of them positive by the definition of $K$, it follows that $\im (f_2 \circ f_1)^{(K)}(0) >0$. Thus $f_2 \circ f_1$ has order of contact $K$ with $\R$ at $0$.
\end{proof}

\subsection{The Class of Functions $\A$} \label{the class A}
$ $

We work in the class of functions $\A$, introduced by Kriete-Moorhouse in \cite{LRC}, consisting of analytic self-maps $\p$ of $\D$ with certain properties of boundary regularity. The motivating model for a function in $\A$ is an analytic self-map of $\D$ which extends analytically to a neighborhood of $\ClD$ and is not a finite Blaschke product. In particular, we restrict the number of points of significant contact with the unit circle, and require relatively nice behavior at these points of significant contact. To make a precise definition, we first discuss Clark measures. For $\zeta \in \T$, we let $\p(\zeta)$ denote the non-tangential limit of $\p$ at $\zeta$, and $\p'(\zeta)$ denote the angular derivative of $\p$ at $\zeta$. If $\p'(\zeta)$ does not exist we say that $|\p'(\zeta)| = \infty$. If $\p'(\zeta)$ does exists, then $\p'(\zeta) = \ol{\zeta}\p(\zeta)|\p'(\zeta)|$, and in particular $\p'(\zeta) > 0$ if $\p$ fixes $\zeta$ \cite{MR1397026}.
 
Let $\p$ be an analytic self-map of $\D$. If $|\alpha| = 1$, there exists a finite positive Borel measure $\mu_\alpha$ on $\T$ such that 
\[ \frac{1-|\p(z)|^2}{|\alpha - \p(z)|^2} = \re \left(\frac{\alpha+\p(z)}{\alpha - \p(z)} \right)
= \int_{\T}P_z(e^{it})d\mu_\alpha(t)\]
for $z$ in $\D$, where $P_z(e^{it}) = \frac{1-|z|^2}{|e^{it}-z|^2}$
is the Poisson kernel at $z$. The existence of $\mu_\alpha$ follows since the left side the equation above is a positive harmonic function. The measures $\mu_\alpha$ are called the Clark measures of $\p$ (see \cite{MR2215991}, \cite{MR1289670}).

The singular part of the measure, $\mu^s_\alpha$, is carried by $\p^{-1}(\{\alpha\})$, the set of those $\zeta \in \T$ where $\p(\zeta)$ exists and equals $\alpha$. The measure $\mu^s_\alpha$ is the sum of the pure point measure $\mu^{pp}_\alpha = \sum_{\p(\zeta)=\alpha} \frac{1}{|\p'(\zeta)|}\delta_\zeta$,
where $\delta_\zeta$ is the unit point mass at $\zeta$, and a continuous singular measure $\mu^{cs}_\alpha$, either of which can be zero.
We write $E(\p) = \ol{ \bigcup_{|\alpha|=1} spt(\mu^s_\alpha) }$,
where $spt(\mu)$ denotes the closed support of a measure $\mu$, and note that for any $\p$,
$F(\p) = \{ \zeta \colon \text{$\p$ has finite angular derivative at $\zeta$} \}$
is a subset of $E(\p)$. Furthermore, if $E(\p)$ is finite then the continuous singular measures $\mu^{cs}_\alpha$ all vanish, and we get that $E(\p) = F(\p)$. For any $\p$, $C_\p$ is a compact operator on $H^2$ if and only if $E(\p)$ is the empty set, see Sarason \cite{MR1044808}, Shapiro-Sundberg \cite{MR994787}, and Cima-Matheson \cite{MR2215991}.

\begin{definition} We define the class $\A$ to be the set of analytic self-maps $\p$ of $\D$ satisfying the following conditions.
\begin{enumerate}
\item \label{DefA1} $| \p(e^{i \theta})| < 1$ a.e.\ on $\T$.
\item \label{DefA2} $E(\p)$ is a finite set, so that $E(\p) = F(\p)$.
\item \label{DefA3} For each point $\zeta \in F(\p)$, there exists an even positive integer $n$ such that $\p$ has order of contact $n$ at $\zeta$, and complex numbers $a_0,a_1,...,a_n$ with
\[ \p(z) = a_0 +a_1(z-\zeta)+...+ a_{n}(z-\zeta)^{n} +o(|z-\zeta|^{n}) \]
as $z \to \zeta$ unrestrictedly in $\D$.
\end{enumerate}
Note: it can be shown that for any $\p$, \ref{DefA2} implies \ref{DefA1}.
\end{definition}

For $\p \in \A$ with order of contact $n$ at $\zeta$, we define the derivatives of $\p$ at $\zeta$ by
\[ \p^{(j)}(\zeta) := \angle \lim_{z \to \zeta} \p^{(j)}(z) = j!a_j \]
for $j = 1, ..., n$ and note that these non-tangential limits do exist (see the argument in \cite[p~47]{MR1289670}).

\begin{prop}\label{A is big} $\A$ contains all self-maps of $\D$ that extend analytically to a neighborhood of $\ClD$ and are not finite Blaschke products.
\end{prop}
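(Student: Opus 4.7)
The plan is to verify conditions (i)--(iii) of the definition of $\A$ directly. The pervasive tool is a single initial observation: if $\p$ is analytic in a neighborhood of $\ClD$ and is not a finite Blaschke product, then $\theta \mapsto |\p(e^{i\theta})|^2 - 1$ is a nonzero real-analytic function on $\T$. Indeed, if it vanished identically then $\p$ would map $\T$ into $\T$, and Schwarz reflection together with the finiteness of the zero set of $\p$ inside $\D$ would force $\p$ to be a finite Blaschke product. Consequently the set $B := \{\zeta \in \T : |\p(\zeta)| = 1\}$ is finite, which immediately delivers condition (i).

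For (ii), at every $\zeta \in B$ the map $\p$ is analytic with $|\p(\zeta)| = 1$, so its ordinary complex derivative $\p'(\zeta)$ is finite and supplies the angular derivative; conversely $F(\p) \subseteq B$ by definition of the angular derivative. Hence $F(\p) = B$ is finite. The support of any singular Clark measure $\mu^s_\alpha$ lies in $\bigcup_{|\alpha|=1} \p^{-1}(\{\alpha\}) \subseteq B$, so $E(\p) \subseteq \overline{B} = B$ is finite, and the remark preceding the definition of $\A$ then yields $E(\p) = F(\p)$.

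For (iii), fix $\zeta \in F(\p)$. Since $\p$ is analytic at $\zeta$, its Taylor series converges in a full disk around $\zeta$, so for every $n$ the polynomial-plus-$o(|z-\zeta|^n)$ expansion required by the definition is automatic. The substantive task, and the main obstacle, is to produce an \emph{even} positive integer that serves as the order of contact at $\zeta$. The plan is to transfer to the upper half-plane by setting $f = \tau_{\p(\zeta)} \circ \p \circ \tau_\zeta^{-1}$, obtaining a function analytic in a full neighborhood $W$ of $0$ that sends $W \cap \U$ into $\U$ and fixes $0$. Writing $f(z) = \sum_k c_k z^k$, I would then invoke the discussion preceding \cref{contact for composite}: once some $c_k$ has nonzero imaginary part, the minimum such index is automatically even, equals the order of contact of $f$ with $\R$ at $0$, and transfers back to the order of contact of $\p$ with $\T$ at $\zeta$ via the conformal maps $\tau_\zeta$ and $\tau_{\p(\zeta)}$.

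To close (iii), I need to rule out the case that every $c_k$ is real. If that held, then Schwarz reflection would give $f(W \cap \R) \subseteq \R$, and pulling back through $\tau_\zeta$ and $\tau_{\p(\zeta)}$ would show that $\p$ sends a small open sub-arc of $\T$ through $\zeta$ into $\T$. Since this sub-arc has positive Lebesgue measure, the real-analytic function $|\p(e^{i\theta})|^2 - 1$ would vanish on a set with an accumulation point and therefore identically on $\T$, contradicting the initial observation and the hypothesis that $\p$ is not a finite Blaschke product. This completes the argument.
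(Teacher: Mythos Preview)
Your proof is correct and follows essentially the same approach as the paper's. The only cosmetic difference is in the finiteness step: the paper shows $B$ is finite by observing it lies in the zero set of the analytic function $\p - \p_e$ (with $\p_e = \rho\circ\p\circ\rho$), whereas you use real-analyticity of $\theta\mapsto|\p(e^{i\theta})|^2-1$; and your treatment of the order-of-contact condition~(iii) spells out via the half-plane transfer what the paper compresses into a single sentence.
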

\begin{proof}
Let $\p$ be such a map defined on a neighborhood $V$ of $\ClD$, and define $A = \{ \zeta \in \T \colon  \ |\p(\zeta)|=1 \}$ to be the set of points where $\p$ has contact with $\T$. Then $A$ consists of zeros of the analytic function $f(z) = \p(z) - \p_e(z)$, where $\p_e = \rho \circ \p \circ \rho$, and $\rho \colon z \mapsto 1/\ol{z}$ is inversion in the unit circle. To obtain a contradiction, suppose that $A$ is infinite. Then $f$ must be identically $0$ on $V$ and $\p$ must be a finite Blaschke product, contradicting our assumption. Thus $A$ is finite and $E(\p) \subset A$ is finite as well. Now let $\zeta \in F(\p) = E(\p)$. Note that $\p$ is analytic at $\zeta$ and maps a small arc containing $\zeta$ onto a curve with contact with $\T$ at exactly one point. Thus $\p$ has finite (necessarily even) order of contact, say $n$, with $\T$ at $\zeta$. To complete the proof, we use the Taylor coefficients of $\p$ at $\zeta$ to write
\[ \p(z) = a_0 +a_1(z-\zeta)+...+ a_{n}(z-\zeta)^{n} +o(|z-\zeta|^{n}). \]
\end{proof}

\subsection{The Denjoy-Wolff Point} \label{Denjoy-Wolff Point}
$ $

If $\p$ is an analytic self-map of $\D$, not the identity and not an elliptic automorphism, then $\p$ has a unique attractive fixed point $\omega$ in $\ClD$.  If $\omega$ lies on $\T$, it is characterized by $\p(\omega) = \omega$ and $0 < \p'(\omega) \leq 1$.  As above, $\p(\omega)$ is interpreted in the sense of nontangential limit and $\p'(\omega)$ is the angular derivative at $\omega$; see \cite[Section~2.3]{MR1397026}.

\section{The Boundary \CF Problem} \label{The CF Problem} \label{The CF Problem - section}

The \CF problem \cite{OriginalCFP, MoreCFP} is to determine whether a given finite sequence of complex numbers comprises the initial Taylor coefficients of an analytic map $f$ mapping the unit disk $\D$ to the upper half-plane $\U$. 
In this section, we explore a special case of a boundary version of the \CF problem studied by Agler-Lykova-Young in \cite{CFP, MR2834887}, where the functions considered are analytic self-maps of $\U$. We note that Bolotnikov studies an alternative version of boundary interpolation, where the functions considered are self-maps of $\D$ \cite{Bolotnikov2011568, Bolotnikov20123123}.

For any $x$ in $\R$ we let $\Pick_x$ denote the set of maps in $\Pick$ that extend analytically to a neighborhood of $x$, where $\Pick$ is the Pick class consisting of maps $f$ analytic on $\U$ which satisfy $\im f(z) \geq 0$ on $\U$.
In \cite{CFP}, Agler-Lykova-Young study a boundary interpolation problem, denoted \dCFP, where the interpolation node $x$ lies on $\R$ and solutions lie in $\Pick_x$. In the subsequent paper, \cite{MR2834887}, weaker solutions to \dCFP, having non-tangential pseudo-Taylor expansions, are considered. For our purposes, unrestricted pseudo-Taylor expansions will suffice. We restrict attention to the case where solutions to \dCFP ~have even order of contact $n$ with $\R$ at $0$. The following is a consequence of \cite[Theorem~1.2(2)]{CFP} and \cite[Theorem~5.2]{MR2834887}.

\begin{thm}\label{CFP condition}\cite{CFP, MR2834887} Let $n = 2m$ be an even positive integer, $a_0, ... ,a_{n-1} \in \R$ and $a_n \in \U$, and let $H_m(a_1,...,a_{n-1})$ be the Hankel matrix defined by
\[  H_m(a_1,...,a_{n-1}) = \begin{bmatrix} 
a_1 & a_2 & ... & a_m     \\
a_2 & a_3 & ... & a_{m+1} \\
.   & .   & ... & .       \\
a_m & a_{m+1} & ... & a_{n-1} \end{bmatrix}. \]
Then the following are equivalent:
\begin{enumerate}
\item There exists a function $f \in \Pz$ that has initial Taylor coefficients $a_0,...,a_n$ at $0$.
\item There exists a function $f \in \Pick$ satisfying 
\[ f (z) = a_0 + a_1 z + ... +a_n z^n +o(|z|^n) \]
as $z \to 0$ unrestrictedly in $\U$.
\item $H_m(a_1,...,a_{n-1}) >0$, i.e., this matrix is positive definite.
\end{enumerate}
\end{thm}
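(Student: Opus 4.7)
The plan is to synthesize the two Agler--Lykova--Young results cited just before the statement, restricted to our special case of even order of contact $n = 2m$ and real initial data $a_0, \ldots, a_{n-1}$. I would proceed in the cycle (i)$\Rightarrow$(ii)$\Rightarrow$(iii)$\Rightarrow$(i).

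The easy step (i)$\Rightarrow$(ii) is immediate from analyticity: if $f \in \Pz$ has the stated initial Taylor coefficients at $0$, then
\[ f(z) - (a_0 + a_1 z + \cdots + a_n z^n) = o(|z|^n) \]
as $z \to 0$ in some full neighborhood of $0$ in $\C$, hence a fortiori as $z \to 0$ unrestrictedly in $\U$.

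For (ii)$\Rightarrow$(iii) I would apply \cite[Theorem~5.2]{MR2834887}, which supplies a necessary positivity condition of Hankel type for the existence of a Pick-class function with an unrestricted pseudo-Taylor expansion at a real interpolation node. Under our hypotheses---$a_0, \ldots, a_{n-1} \in \R$, $\im a_n > 0$, and $n$ even---the general matrix appearing in that theorem should collapse to the symmetric Hankel matrix $H_m(a_1, \ldots, a_{n-1})$ stated here, and the semidefiniteness condition there should sharpen to strict positive definiteness. For (iii)$\Rightarrow$(i) I would invoke \cite[Theorem~1.2(2)]{CFP}, which supplies the strongest conclusion of the three: strict positivity of $H_m$ yields an honest analytic Pick-class solution, i.e., a function in $\Pz$, whose first $n+1$ Taylor coefficients match the prescribed data. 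Notably, the hypothesis $\im a_n > 0$ does not constrain $H_m$ at all; rather, strict positivity of $H_m$ leaves one full degree of freedom at the top coefficient $a_n$, which one selects to equal the prescribed value in $\U$.

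The main obstacle I expect is not conceptual but notational. The results in \cite{CFP, MR2834887} are stated in greater generality---allowing complex data and arbitrary order $n$---and sometimes in terms of Pick-type rather than Hankel-type matrices. The work is in verifying that under our restriction to real $a_0, \ldots, a_{n-1}$ with $n = 2m$, their positivity condition reduces to strict positive definiteness of the single $m \times m$ Hankel matrix $H_m(a_1, \ldots, a_{n-1})$ written above, and that the freedom in the top coefficient is indeed parametrized by $\U$. Once those identifications are in place, the three implications follow by direct citation.
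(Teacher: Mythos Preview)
Your proposal is correct and matches the paper's approach exactly: the paper does not give its own proof of this theorem but simply records it as ``a consequence of \cite[Theorem~1.2(2)]{CFP} and \cite[Theorem~5.2]{MR2834887},'' and your cycle (i)$\Rightarrow$(ii)$\Rightarrow$(iii)$\Rightarrow$(i) is precisely the intended synthesis of those two citations. Your remarks about the notational translation (from the general Pick-matrix setting to the real-data Hankel matrix $H_m$) are appropriate caveats for anyone wanting to verify the reduction in detail.
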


\subsection{Parametrization of Solutions to the Contact-$n$ Case}
$ $

In \cite{CFP}, Agler-Lykova-Young give a parametrization of all solutions in the case where $a_0, ..., a_n$ are real.
We apply the same techniques to the order of contact $n$ case, that is, the case where $a_0, ... ,a_{n-1} \in \R$ and $a_n \in \U$, and arrive at a similar parametrization. The main tool used is a technique for passing from a function in the Pick class to a simpler one and back again due to G. Julia \cite{MR1555173}. Reduction and augmentation (at $0$) of a function are defined as follows.

\begin{definition} For any non-constant function $f \in \Pz$ such that $f(0) \in \R$, we define the \textbf{reduction} of $f$ (at $0$) to be the function $g$ on $\U$ given by the equation
\[ g(z) = -\frac{1}{f(z) - f(0)} + \frac{1}{f'(0)z}. \]
\end{definition}

\begin{definition} For any function $g \in \Pz$ and any $a_0 \in \R$, $a_1 > 0$, we define the \textbf{augmentation} of $g$ (at $0$) by $a_0, a_1$ to be the function $f$ on $\U$ given by
\[ f(z) = a_0 +\frac{1}{\frac{1}{a_1 z} - g(z)}. \]
\end{definition}

Reduction and augmentation preserve the Pick class (see \cite[Theorem~3.4]{RedAug}) and are inverse operations for functions in $\Pz$, that is,
\begin{enumerate}
\item if $f \in \Pz$ is non-constant and $f(0) \in \R$ then the reduction $g$ of $f$ is in $\Pz$ as well, and $f$ is the augmentation of $g$ by $f(0), f'(0)$;
\item if $g \in \Pz$ and $a_0 \in \R$, $a_1 > 0$ then the augmentation $f$ of $g$ by $a_0, a_1$ is in $\Pz$ as well and satisfies $f(0)=a_0$ and $f'(0)=a_1$, and $g$ is the reduction of $f$.
\end{enumerate}

The relationship between the Taylor coefficients of a function and those of its reduction is explicitly expressed in \cite[Proposition~2.5]{CFP}. The following corollary to \cite[Proposition~2.5]{CFP} includes a statement contained in \cite[Corollary~3.3]{CFP}.

\begin{cor}\label{Taylor coeff relation} Let $f \in \Pz$ satisfy $f'(0) >0$, and let $g$ be the reduction of $f$. Let the Taylor expansions of $f$ and $g$ about $0$ be $f(z) = \sum_{j=0}^{\infty}a_jz^j$ and $g(z) = \sum_{j=0}^{\infty}b_jz^j$. Then the following statements hold.
\begin{enumerate}
\item \label{TCR1} For any $n \geq 2$, $a_1,...,a_n$ determine $b_0,...,b_{n-2}$ and in the other direction $a_1, b_0, ... , b_{n-2}$ determine $a_2,...a_n$.

\item \label{TCR2} If $a_0 \in \R$ then for any $k \geq 2$, $a_k$ is the first non-real Taylor coefficient of $f$ if and only if $b_{k-2}$ is the first non-real Taylor coefficient of $g$, that is to say, reduction reduces the order of contact by $2$.

\item \label{TCR3} $H_m(a_1,...,a_{2m-1}) > 0$ if and only if ${H_{m-1}(b_1,...,b_{2m-3}) > 0}$.
\end{enumerate}
\end{cor}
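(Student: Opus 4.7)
The plan is to handle the three parts sequentially: (i) by a direct formal power-series manipulation of the reduction formula, (ii) by tracking imaginary parts through that manipulation, and (iii) by invoking \cref{CFP condition} together with the Pick-class preservation of reduction and augmentation.

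For (i), I would write $f(z) - a_0 = a_1 z + a_2 z^2 + \cdots$ and invert this as a formal Laurent series
\[ \frac{1}{f(z)-a_0} = \frac{c_{-1}}{z} + c_0 + c_1 z + c_2 z^2 + \cdots . \]
Matching Taylor coefficients gives $c_{-1} = 1/a_1$ and the recursion $a_1 c_n = -\sum_{j=2}^{n+2} a_j c_{n+1-j}$ for $n \geq 0$. The $1/(a_1 z)$ term in the definition of $g$ exactly cancels the Laurent pole, so $b_n = -c_n$. An induction on this recursion shows $b_n$ is a rational function of $a_1, \ldots, a_{n+2}$ with only powers of $a_1$ in the denominator, giving the forward direction. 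Solving the same recursion for $a_{n+2}$ in terms of $a_1, \ldots, a_{n+1}$ and $c_{-1}, c_0, \ldots, c_n$ yields the reverse direction.

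For (ii) I would track where the first non-real $a_k$ enters the recursion. Under the hypotheses $a_0 \in \R$, $a_1 > 0$, and $a_2, \ldots, a_{k-1} \in \R$, induction on the recursion gives $c_0, \ldots, c_{k-3} \in \R$, so $b_0, \ldots, b_{k-3} \in \R$. The defining equation
\[ a_1 c_{k-2} = -a_2 c_{k-3} - a_3 c_{k-4} - \cdots - a_{k-1} c_0 - a_k c_{-1} \]
then shows that the only non-real contribution to $c_{k-2}$ comes from $-a_k c_{-1} = -a_k/a_1$, so $\im b_{k-2} = \im(a_k)/a_1^2$. This proves both directions of (ii).

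For (iii) I would invoke \cref{CFP condition} twice, passing through an auxiliary Pick function rather than using $f$ directly. Because $b_1, \ldots, b_{2m-3}$ depend only on $a_1, \ldots, a_{2m-1}$ by (i), any $\tilde f \in \Pz$ with the same first $2m$ Taylor coefficients as $f$ produces the same first $2m-2$ coefficients under reduction. Now, given $H_m(a_1, \ldots, a_{2m-1}) > 0$, pick any $a'_{2m} \in \U$; \cref{CFP condition} yields $\tilde f \in \Pz$ with Taylor data $a_0, a_1, \ldots, a_{2m-1}, a'_{2m}$. Its reduction $\tilde g \in \Pz$ has Taylor coefficients $b_0, b_1, \ldots, b_{2m-3}$ (the same as for $g$) followed by some $b'_{2m-2} \in \U$ by (ii), so applying \cref{CFP condition} to $\tilde g$ gives $H_{m-1}(b_1, \ldots, b_{2m-3}) > 0$. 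The reverse direction is symmetric, using augmentation by $a_0, a_1$. The main technical obstacle will be matching the hypotheses of \cref{CFP condition}, which requires the ``next'' coefficient strictly in $\U$, whereas the Hankel condition is a statement about real numbers alone; part (ii) is precisely the bookkeeping that makes this transfer clean, since it guarantees the first non-real index shifts by exactly two under reduction.
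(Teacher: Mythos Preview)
Your argument is correct. For parts (i) and (ii) you are carrying out explicitly the formal power-series computation that the paper outsources to \cite[Proposition~2.5]{CFP}; the recursion $a_1 c_n = -\sum_{j=2}^{n+2} a_j\, c_{n+1-j}$ and the imaginary-part bookkeeping $\im b_{k-2} = \im(a_k)/a_1^2$ are exactly what that reference encodes, so here your route coincides with the paper's.

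Part (iii) is where you genuinely diverge. The paper simply cites \cite[Corollary~3.3]{CFP}, which proves the Hankel equivalence by a direct matrix factorization (a Schur-complement / congruence argument on $H_m$). You instead bootstrap from \cref{CFP condition}: realize the real data $a_0,\dots,a_{2m-1}$ together with an artificially chosen $a'_{2m}\in\U$ as the jet of some $\tilde f\in\Pz$, reduce, and read off $H_{m-1}>0$ from the existence of $\tilde g\in\Pz$ with the right jet. This is a legitimate and rather elegant alternative---it stays entirely within the tools already quoted in the paper and avoids any explicit Hankel algebra. The trade-off is that your argument tacitly uses $a_0,\dots,a_{2m-1}\in\R$ (needed for the hypotheses of \cref{CFP condition}); this is harmless since $a_0\in\R$ is forced by the definition of reduction and reality of $a_1,\dots,a_{2m-1}$ is implicit in the positive-definiteness statement, but you should say so. The paper's cited approach, by contrast, gives an explicit congruence between $H_m$ and $\mathrm{diag}(a_1, H_{m-1})$ (up to scaling), which is more informative if one later needs quantitative control on the Hankel matrices.
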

\begin{proof} 
\ref{TCR1} and \ref{TCR2} follow from \cite[Proposition~2.5]{CFP} and a calculation, and \ref{TCR3} is contained in \cite[Corollary~3.3]{CFP}.
\end{proof}

For a matrix $A = \begin{bmatrix} a_{11} & a_{12}\\ a_{21} & a_{22} \\ \end{bmatrix}$, we denote the corresponding linear fractional transformation by $L[A]$:
\[ L[A]h = \dfrac{a_{11}h+a_{12}}{a_{21}h+a_{22}}. \]
With this notation, introduced in \cite{CFP}, we can express the augmentation $f$ of $g$ by $a_0, a_1$ by $f(z) = L[A(a_0, a_1)(z)]g(z)$, where $A(a_0, a_1)(z)$ is defined by
\[ A(a_0, a_1)(z) = \begin{bmatrix} a_0 a_1 z &  - a_0 - a_1 z \\  a_1 z & -1 \\ \end{bmatrix}. \]

Note that composition of linear fractional transformations corresponds to matrix multiplication, and so this notation enables conversion of multiple augmentations into matrix multiplication.

\begin{thm}\label{dCFP contact param yucky} Let $n=2m$ be an even positive integer and let $a_0,...,a_{n-1} \in \R$ and $a_n \in \U$ be such that $H_m(a_1,...,a_{n-1})>0$. Let
\[ a_0 = a^{(0)}_0, a^{(1)}_0,..., a^{(m-1)}_0 \in \R, \quad a^{(m)}_0 \in \U, \qquad a_1 = a^{(0)}_1, a^{(1)}_1,...,a^{(m-1)}_1 > 0, \]
be the parameters determined by $a_0, ..., a_n$ via the procedure in the proof below. Then a functions $f \in \Pz$ has initial Taylor coefficients $a_0, ..., a_n$ if and only if $f$ is of the form
\[ f(z) = L[A(a_0^{(0)}, a^{(0)}_1)(z) \cdots A(a^{(m-1)}_0, a^{(m-1)}_1)(z)] g(z) \]
where $g \in \Pz$ and satisfies $g(0) = a^{(m)}_0$.
\end{thm}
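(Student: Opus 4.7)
The plan is to construct the parameters $(a_0^{(k)}, a_1^{(k)})$ for $k = 0, \ldots, m-1$ together with $a_0^{(m)} \in \U$ by $m$-fold iterated reduction, and then invert the procedure via $m$-fold augmentation.

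Set $f^{(0)} := f$, $a_0^{(0)} := a_0$, and $a_1^{(0)} := a_1$. Since $H_m(a_1, \ldots, a_{n-1}) > 0$ forces its $(1,1)$ entry $a_1$ to be positive, reduction at $0$ is well defined and produces $f^{(1)} \in \Pz$. By parts (i), (ii), and (iii) of \cref{Taylor coeff relation}, the initial Taylor coefficients $b_0, \ldots, b_{n-2}$ of $f^{(1)}$ are determined by $a_0, \ldots, a_n$, satisfy $b_0, \ldots, b_{n-3} \in \R$ and $b_{n-2} \in \U$, and have $H_{m-1}(b_1, \ldots, b_{n-3}) > 0$; in particular $b_1 > 0$. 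Set $a_0^{(1)} := b_0$, $a_1^{(1)} := b_1$, and iterate. A straightforward induction shows that after $k$ steps $f^{(k)} \in \Pz$ has its first $n - 2k$ Taylor coefficients real, its $(n-2k)$-th coefficient in $\U$, and an associated positive definite Hankel matrix of size $m-k$, so the next reduction is legitimate and the two parameters extracted at each stage depend only on $a_0, \ldots, a_n$. After $m$ reductions, $g := f^{(m)} \in \Pz$ satisfies $g(0) \in \U$, and we set $a_0^{(m)} := g(0)$.

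Because augmentation inverts reduction within $\Pz$, undoing the $m$ reductions one at a time expresses $f$ as the $m$-fold augmentation of $g$ by the collected parameters. The identification of a single augmentation with the linear fractional action $h \mapsto L[A(a_0^{(k)}, a_1^{(k)})(z)]\,h$, together with the correspondence between linear fractional composition and matrix multiplication, collapses the iterated augmentations into
\[
f(z) = L[A(a_0^{(0)}, a_1^{(0)})(z) \cdots A(a_0^{(m-1)}, a_1^{(m-1)})(z)]\, g(z),
\]
which is the claimed formula, establishing necessity. For sufficiency, any $g \in \Pz$ with $g(0) = a_0^{(m)}$ produces, via the same matrix-product formula, a function $f$ obtained from $g$ by $m$ successive augmentations, each preserving $\Pz$, so $f \in \Pz$; reducing $f$ back $m$ times returns $g$, and part (i) of \cref{Taylor coeff relation} applied in reverse at each stage recovers $a_0, \ldots, a_n$ as the initial Taylor coefficients of $f$.

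The main technical hurdle is purely bookkeeping: confirming that each $a_1^{(k)}$ is strictly positive (so every reduction and augmentation is legitimate) and that each parameter depends only on $a_0, \ldots, a_n$ rather than on deeper coefficients of $f$. Both are delivered by the induction sketched above, using parts (i) and (iii) of \cref{Taylor coeff relation} combined with the elementary fact that a positive definite Hankel matrix has strictly positive $(1,1)$ entry.
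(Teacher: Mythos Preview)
Your proof is correct and follows essentially the same route as the paper: define the parameters by iterated reduction, use \cref{Taylor coeff relation} to verify at each stage that the extracted parameters are real/positive and depend only on $a_0,\dots,a_n$, and then invert via iterated augmentation. The one small point the paper makes explicit and you leave implicit is that the procedure defining the parameters must be launched from \emph{some} function with the prescribed coefficients; the paper invokes \cref{CFP condition} to guarantee such an $F_0$ exists before extracting parameters, whereas you extract them from the $f$ given in the forward direction---this is harmless since you do observe the parameters depend only on the data, but strictly speaking the parameters in the theorem statement need to be defined prior to and independently of the $f$ being tested.
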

\begin{proof} Note that by \cref{CFP condition}, there exists a function $F_0 \in \Pz$ with initial Taylor coefficients $a_0,...a_n$. We inductively define $F_{k+1} \in \Pz$ to be the reduction of $F_k$ for $k=1,...,m$, and $a_0^{(k)}, a_1^{(k)},...$ to be the Taylor coefficients of $F_k$ at $0$. Notice that for each $k$, $F_k$ is the augmentation of $F_{k+1}$ by $a_0^{(k)}, a_1^{(k)}$, so that $F_k(z) = L[A(a^{(k)}_0, a^{(k)}_1)]F_{k+1}(z)$, and so $F_0$ can be written as
\begin{align*}
F_0(z) = L[A(a^{(0)}_0, a^{(0)}_1)(z) \cdots A(a^{(m-1)}_0, a^{(m-1)}_1)(z)]F_m(z).
\end{align*}

By \cref{Taylor coeff relation}\ref{TCR1}, all the Taylor coefficients listed below are determined by $a_0,...,a_n$ and do not depend on our choice of $F_0$.
\[ \begin{CD}
F_0 @>{\text{red}}>> F_1   @>{\text{red}}>> . ~ . ~ .  @>{\text{red}}>> F_{m-1}   @>{\text{red}}>> F_m \\
a_0 @.               a_0^{(1)} @. \cdots        @.               a_0^{(m-1)} @.               \mathbf{a_0^{(m)}} \\
a_1 @.               a_1^{(1)} @. \cdots        @.               a_1^{(m-1)} \\
a_2 @.               a_2^{(1)} @. \cdots        @.               \mathbf{a_2^{(m-1)}} \\
\vdots @.            \vdots @. \iddots \\
a_{n-2} @.           \mathbf{a_{n-2}^{(1)}} \\
a_{n-1}\\
\mathbf{a_n}
\end{CD}\]
In general, for $k=1,...,m$ we have that $a^{(k)}_0,...a^{(k)}_{n-2k}$ are determined. It follows from \cref{Taylor coeff relation}\ref{TCR2} that $a^{(k)}_0,...a^{(k)}_{n-2k -1} \in \R$ and $a^{(k)}_{n-2k} \in \U$, i.e., all the non-bold coefficients above are in $\R$ and all the bold coefficient are in $\U$. In particular, we get that
\[ a_0 = a^{(0)}_0, a^{(1)}_0,..., a^{(m-1)}_0 \in \R, \qquad a^{(m)}_0 \in \U. \]
Additionally, \cref{Taylor coeff relation}\ref{TCR3} implies that for $k=1,..., m-1$ the Hankel matrix $H_{m-k}(a^{(k)}_1,...,a^{(k)}_{n-2k-1})$ is positive, and so in particular we have
\[ a_1 = a^{(0)}_1, a^{(1)}_1,...,a^{(m-1)}_1 > 0. \]
Note that since $F_0, ..., F_{m-1}$ are all real valued at $0$ and non-constant, taking the above reductions makes sense.

To prove the forward implication, suppose $f = f_0 \in \Pz$ has initial Taylor coefficients $a_0,...,a_n$, and let $f_k$ denote the $k^{th}$ reduction of $f_0$. As discussed above, the first $n-2k +1$ Taylor coefficients of $f_k$ are $a^{(k)}_0,...a^{(k)}_{n-2k}$, and so $f_0$ can be written as
\[ f_0(z) = L[A(a^{(0)}_0, a^{(0)}_1)(z) \cdots A(a^{(m-1)}_0, a^{(m-1)}_1)(z)]f_m(z), \]
with $f_m$ satisfying $f_m(0) = a_0^{(m)}$.

To prove the backward implication, let $g = f_m \in \Pz$ satisfy $g(0)= a^{(m)}_0$ and inductively define $f_k$ for $k = m-1 ,...,0$ to be the augmentation of $f_{k+1}$ by $a_0^{(k)}, a_1^{(k)}$, i.e.,
$f_k(z) = L[A(a^{(k)}_0, a^{(k)}_1)]f_{k+1}(z)$. In order to complete the proof note that it follows from \cref{Taylor coeff relation}\ref{TCR1} that for each $k$ the first $n-2k+1$ Taylor coefficients of $f_k$ are again $a^{(k)}_0,...a^{(k)}_{n-2k}$.
\end{proof}

Note that by multiplying the matrices in \cref{dCFP contact param yucky}, we get that functions $f \in \Pz$ with initial Taylor coefficients $a_0,...,a_n$ are of the form
\[ f(z) = \frac{p(z)h(z)+q(z)}{r(z)h(z)+s(z)}, \]
where $p,q,r,s$ are polynomials with real coefficients of degree at most $m$ determined by $a_0,...,a_n$
and $h \in \Pz$ satisfies $h(0) = h_0$, where $h_0$ is determined by $a_0,...,a_n$.
Additionally, a calculation of determinants shows that for some $K>0$ the polynomials $p,q,r,s$ satisfy $(ps - qr)(z) = Kz^n$.

We let the Taylor coefficients $a_0,...a_n$ vary to obtain the following corollary.

\begin{prop}\label{param of contact n} Let $f \in \Pz$. Then $f$ has order of contact $n=2m$ with $\R$ at $0$ if and only if $f$ is of the form
\[ f(z) = L[A(a_0^{(0)}, a^{(0)}_1)(z) \cdots A(a^{(m-1)}_0, a^{(m-1)}_1)(z)] g(z), \]
where $g \in \Pz$ satisfies $g(0) \in \U$ and
\[ a^{(0)}_0, a^{(1)}_0,..., a^{(m-1)}_0 \in \R, \qquad a^{(0)}_1, a^{(1)}_1,...,a^{(m-1)}_1 > 0.\]
Furthermore, for any $f \in \Pz$ this representation is unique.
\end{prop}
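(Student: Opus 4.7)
The plan is to deduce this parametrization from \cref{dCFP contact param yucky} by letting the prescribed Taylor data vary. For the forward implication, suppose $f \in \Pz$ has order of contact $n = 2m$ with $\R$ at $0$. From the analysis in \cref{order of contact subsection}, the initial Taylor coefficients $a_0, \ldots, a_n$ of $f$ then satisfy $a_0, \ldots, a_{n-1} \in \R$ and $a_n \in \U$. Applying \cref{CFP condition} with $f$ itself as the interpolating function forces $H_m(a_1, \ldots, a_{n-1}) > 0$, so all hypotheses of \cref{dCFP contact param yucky} are satisfied, and the theorem produces the desired representation with real parameters $a_0^{(k)}$ and positive $a_1^{(k)}$ for $k = 0, \ldots, m-1$, together with $g \in \Pz$ satisfying $g(0) = a_0^{(m)} \in \U$.

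For the backward implication, set $f_m = g$ and inductively let $f_k$ be the augmentation of $f_{k+1}$ by $a_0^{(k)}, a_1^{(k)}$ for $k = m-1, \ldots, 0$, so that $f = f_0$. Augmentation preserves the Pick class, and because $f_{k+1}$ is analytic at $0$, the denominator $\tfrac{1}{a_1^{(k)} z} - f_{k+1}(z)$ has a simple pole at $0$ with nonzero residue, making $f_k$ analytic at $0$; hence every $f_k$ lies in $\Pz$ and in particular $f \in \Pz$. To count the order of contact, observe that $f_m = g$ has its first non-real Taylor coefficient at position $0$ since $g(0) \in \U$. Reading \cref{Taylor coeff relation}\ref{TCR2} in reverse, each augmentation with real $a_0^{(k)}$ and positive $a_1^{(k)}$ advances the position of the first non-real Taylor coefficient by $2$. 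After $m$ such steps, $f_0 = f$ has first non-real Taylor coefficient at position $2m$, which is precisely order of contact $n$ at $0$.

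Uniqueness follows because $f$ determines its first $n+1$ Taylor coefficients $a_0, \ldots, a_n$, and these in turn determine every $a_0^{(k)}$ and $a_1^{(k)}$ via the coefficient-propagation diagram in the proof of \cref{dCFP contact param yucky} (using \cref{Taylor coeff relation}\ref{TCR1}); the auxiliary function $g$ is then recovered as the $m$-fold iterated reduction of $f$. The main point to watch is the base case of the backward induction, where \cref{Taylor coeff relation}\ref{TCR2} is applied at $k = 2$ with $g = f_m$ satisfying $g(0) \in \U$ rather than $g(0) \in \R$; this falls within the scope of the corollary (the first non-real coefficient simply sits at position $0$), but is the step that departs from the usual context in which reduction/augmentation are analyzed.
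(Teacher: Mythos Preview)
Your proof is correct and follows essentially the same approach as the paper: both directions invoke \cref{CFP condition} and \cref{dCFP contact param yucky} for the forward implication, use \cref{Taylor coeff relation}\ref{TCR2} to track the order of contact through $m$ augmentations for the backward implication, and recover uniqueness by identifying the parameters with the data of successive reductions of $f$. Your added remarks about analyticity of the augmentation at $0$ and the base case of \ref{TCR2} are fine but not strictly needed beyond what the paper already states.
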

\begin{proof}
For the first direction, assume $f$ has order of contact $n=2m$ with $\R$ at $0$, and let $a_0,...,a_{n-1} \in \R$ and $a_n \in \U$ be the initial Taylor coefficients of $f$ at $0$. Then by \cref{CFP condition} we get that $H_m(a_1,...,a_{n-1})>0$, and so by \cref{dCFP contact param yucky} $f$ is of the desired form.

For the other direction note that any $f$ of this form is obtained by applying $m$ augmentations to $g \in \Pz$, and so $f \in \Pz$. Recall that by \cref{Taylor coeff relation}\ref{TCR2}, augmentation increases the order of contact with $\R$ by $2$, and so $g(0) \in \U$ implies that $f$ has order of contact $2m = n$.

To see uniqueness, notice that if
\[f(z) = L[A(a_0^{(0)}, a^{(0)}_1)(z) \cdots A(a^{(m-1)}_0, a^{(m-1)}_1)(z)] g(z) \]
then $a_0^{(k)} = f_k(0), a_1^{(k)} = f_k'(0)$ and $g = f_m$ where $f_k$ denotes the $k^{th}$ reduction of $f$.
\end{proof}

\subsection{Rational Functions with Specified Taylor Coefficients} \label{basic functions}
$ $

We turn our attention to construction of simple solutions to \dCFP. We construct rational maps $f \in \Pz$ which map $0$ into $\R$ and the rest of $\RH = \R \cup \{ \infty \}$ into $\U$ and have specified initial Taylor coefficients at $0$. Additionally we require that $0$ be a regular value for $f$, i.e., that $f^{-1}(\{0\})$ consists of $d$ distinct points where $d$ is the degree of $f$.

\begin{observation} Reduction and augmentation preserve rationality and boundary behavior of functions in the sense that if $g$ is the reduction of $f$, then the following assertions hold.
\begin{enumerate}
\item $g$ is a rational function of degree $d$ if and only if $f$ a rational function of degree $d+1$.
\item For any $x \in \R \setminus \{ 0 \}$, $\im g(x) > 0$ if and only if $\im f(x) > 0$.
\end{enumerate}
\end{observation}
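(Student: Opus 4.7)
The plan is to exploit the fact that reduction and augmentation are inverse operations on $\Pz$ (as noted just above the observation), so both parts can be deduced by direct computation from the augmentation formula applied to $g$. Write $g = P/Q$ in lowest terms with $Q(0) \neq 0$ (since $g \in \Pz$ implies $g(0)$ is finite), and set $a_0 = f(0) \in \R$, $a_1 = f'(0) > 0$. Substituting into the augmentation formula gives
\[ f(z) = \frac{a_0\bigl(Q(z) - a_1 z P(z)\bigr) + a_1 z Q(z)}{Q(z) - a_1 z P(z)}. \]
For the forward direction of (i), I would first verify that these two polynomials are coprime: any common root $\alpha$ forces $a_1 \alpha Q(\alpha) = 0$ after subtracting $a_0$ times the denominator from the numerator, and then $Q(0) \neq 0$ combined with $\gcd(P,Q) = 1$ rules out both $\alpha = 0$ and $Q(\alpha) = 0$. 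A short case split on whether $\deg P = d$ (so the denominator's $-a_1 z P$ term has degree $d+1$) or $\deg Q = d$ (so the numerator's $a_1 z Q$ term has degree $d+1$) then shows $\deg f = d+1$ exactly.

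For the converse, note that $g = -1/(f - f(0)) + 1/(f'(0)z)$ is manifestly rational whenever $f$ is, and a brief Taylor expansion at $0$ shows $g(0)$ is finite, so $g \in \Pz$ is rational of some degree $e$. Since augmenting $g$ by $f(0), f'(0)$ recovers $f$, the forward direction yields $\deg f = e+1$, forcing $e = d$.

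For (ii), I would substitute any $x \in \R \setminus \{0\}$ that is not a pole of $f$ or $g$ into the augmentation formula to obtain $f(x) - a_0 = 1/\bigl(1/(a_1 x) - g(x)\bigr)$. Since $a_0$ and $1/(a_1 x)$ are real, the identity $\im(1/w) = -\im w / |w|^2$ gives
\[ \im f(x) = \frac{\im g(x)}{\left|\tfrac{1}{a_1 x} - g(x)\right|^2}, \]
so $\im f(x)$ and $\im g(x)$ have the same sign. The only real obstacle I anticipate is the coprimality and degree-tracking in (i), which is routine polynomial bookkeeping; once that is in place, the rest follows cleanly from the inverse relationship between reduction and augmentation.
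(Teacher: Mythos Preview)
The paper states this result as an Observation and supplies no proof, so there is nothing to compare your argument against. Your proposal is correct: the coprimality check and the degree bookkeeping in (i) go through exactly as you outline (in each of your two cases at least one of the numerator and denominator attains degree $d+1$, and coprimality then forces $\deg f = d+1$; the converse via the inverse relationship is clean), and the imaginary-part identity in (ii) follows immediately from the augmentation formula as you wrote. The only minor omission is the pole case in (ii), which is harmless: if $g$ has a pole at $x$ then $f(x) = a_0 \in \R$, and if $f$ has a pole at $x \neq 0$ then $g(x) = 1/(a_1 x) \in \R$, so in neither situation is either imaginary part positive and the equivalence holds trivially.
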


\begin{prop}\label{existence of basic functions}Let $f$ be in $\Pick$ and suppose that for some $n = 2m$, $f$ has pseudo Taylor coefficients
\[ \lim_{ z \to 0} \frac{f^{(k)}(z)}{k!} = a_k \quad \text{for $k=0,1,...,n$}, \]
where the limits are taken unrestrictedly in $\U$, such that $a_0, a_1,...,a_{n-1} \in \R$ and $a_n \in \U$.
Then there exists a degree $m+1$ rational function $f_0 \in \Pz$ that has $0$ as a regular value, maps $\RH \setminus \{0\}$ into $\U$ and has initial Taylor coefficients $a_0,...,a_n$ at $z=0$.
\end{prop}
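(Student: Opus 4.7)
The plan is to specialize the parametrization of \cref{param of contact n} with a carefully chosen auxiliary function $g$. The hypothesis together with \cref{CFP condition} gives $H_m(a_1,\dots,a_{n-1}) > 0$, hence \cref{param of contact n} applies: every $f_0 \in \Pz$ with the prescribed Taylor coefficients has the form $f_0 = L[M(z)]\, g(z)$, where $M(z) = A(a_0^{(0)},a_1^{(0)})(z) \cdots A(a_0^{(m-1)},a_1^{(m-1)})(z)$ is a $2 \times 2$ matrix of polynomials of degree at most $m$ with real coefficients determined by $a_0,\dots,a_n$, and $g$ ranges over all of $\Pz$ with $g(0) = a_0^{(m)} \in \U$. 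By the Observation preceding this proposition, $m$ augmentations of a rational $g$ of degree $d$ produce $f_0$ rational of degree $d + m$; so to obtain $\deg f_0 = m+1$ one is forced to take $g$ rational of degree one.

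The auxiliary function I would use is
\[ g(z) = c - \frac{\lambda}{z - \omega}, \qquad c := a_0^{(m)} - \frac{\lambda}{\omega}, \]
for $\lambda > 0$ small and $\omega \in \C$ with $\im\omega < 0$. Then $g(0) = a_0^{(m)}$, $\im c > 0$ for small enough $\lambda$, and $g \in \Pz$ by a direct computation of $\im g$. Crucially, the pole of $g$ lies strictly in the lower half-plane, so $g$ extends analytically across $\RH$ with $g(\RH) \subset \U$. Setting $f_0 := L[M] g$, \cref{param of contact n} guarantees $f_0 \in \Pz$ with the required Taylor coefficients and degree $m + 1$. For $f_0(\RH \setminus \{0\}) \subset \U$, compute $\det A(a,b)(z) = b^2 z^2$, hence $\det M(z) = K z^{2m}$ with $K = (\prod_k a_1^{(k)})^2 > 0$. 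Then for any $x \in \R \setminus \{0\}$, $M(x)$ is a real $2 \times 2$ matrix with positive determinant, so $L[M(x)]$ preserves $\U$; combined with $g(x) \in \U$ this forces $f_0(x) \in \U$. The analogous leading-coefficient argument at $\infty$ uses $g(\infty) = c \in \U$.

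The remaining claim is that $0$ is a regular value of $f_0$, i.e., the numerator polynomial $N(z) := M_{11}(z) p(z) + M_{12}(z) q(z)$ (where $g = p/q$ with $p, q$ coprime polynomials of degree one) has $m+1$ distinct roots. A short manipulation yields $N(z) = (z - \omega) P(z) - (\lambda/\omega)\, z M_{11}(z)$ with $P(z) := a_0^{(m)} M_{11}(z) + M_{12}(z)$ of degree $m$. At $\lambda = 0$ the roots of $N$ are $\omega$ together with the $m$ roots of $P$; for $\omega$ (with $\im\omega < 0$) avoiding the roots of $P$ and $\lambda > 0$ small, any simple root of $P$ perturbs to a simple root of $N$, and any $k$-fold root splits generically into $k$ distinct roots of $N$ of order $\lambda^{1/k}$ apart. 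The main obstacle in a rigorous proof is this last step: showing that the discriminant $\Delta(\lambda,\omega)$ of $N$, as an analytic function of the free parameters, is not identically zero, so that an admissible $(\lambda,\omega)$ producing $m+1$ distinct roots of $N$ can be selected.
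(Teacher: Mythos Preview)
Your approach is essentially the paper's: invoke \cref{CFP condition}, feed the parametrization (this is really \cref{dCFP contact param yucky}, not \cref{param of contact n}, since the $a_j$ are fixed), and plug in a degree-one $g\in\Pz$ with $g(0)=a_0^{(m)}$ whose pole lies in the lower half-plane. Your argument that $f_0(\RH\setminus\{0\})\subset\U$ via ``$M(x)$ real with positive determinant for $x\in\R\setminus\{0\}$'' is a pleasant alternative to the paper's use of the Observation on augmentation; both are fine, though the case $z=\infty$ needs a word about leading coefficients.

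The one substantive difference is the choice of $g$ and how it feeds the regular-value step. The paper takes
\[
g_w(z)=L\bigl[A\bigl(a_0^{(m)},1\bigr)(z)\bigr]\,w = a_0^{(m)}+\frac{z}{1-wz},\qquad w\in\U,
\]
i.e.\ \emph{one further augmentation} of a constant. This buys exactly what you are missing: writing the full product matrix as $\begin{bmatrix}p&q\\ r&s\end{bmatrix}$, the numerator of $F_w$ is $p(z)w+q(z)$, \emph{linear} in the single parameter $w$, with $(ps-qr)(z)=Kz^{n+2}$. The zeros of $F_w$ are then precisely the $w$-fibers of the nonconstant rational map $z\mapsto -q(z)/p(z)$, and a nonconstant rational map has only finitely many critical values; hence for all but finitely many $w\in\C$ the fiber consists of $m+1$ distinct points. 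That disposes of the discriminant issue in one line.

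Your two-parameter family $g(z)=c-\lambda/(z-\omega)$ works too, but the acknowledged gap is real work, not a formality. The cleanest way to close it within your setup: fix $\omega$ (with $\im\omega<0$, $\omega$ not a root of $P$, $M_{11}(\omega)\neq0$) and set $\mu=\lambda/\omega$. Then $N(z)=0$ reads $\mu=\dfrac{(z-\omega)P(z)}{zM_{11}(z)}=:R(z)$, a nonconstant rational function of $z$; its critical values are finite, so for $\mu$ on the ray $\{\lambda/\omega:\lambda>0\}$ avoiding that finite set (and small enough that $\im c>0$), the fiber $R^{-1}(\mu)$ consists of simple points. This is exactly the paper's mechanism, transported to your parametrization.
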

\begin{proof} By \cref{CFP condition}, $H_m(a_1,...,a_{n-1})>0$ and so by \cref{dCFP contact param yucky} we have that for any $g \in \Pz$ with $g(0) = a_0^{(m)}$,
\[ F(z) = L[A(a^{(0)}_0, a^{(0)}_1)(z) \cdots A(a^{(m-1)}_0, a^{(m-1)}_1)(z)]g(z), \]
is in $\Pz$ with the desired Taylor coefficients. Here $a^{(0)}_0=a_0, a^{(1)}_0,..., a^{(m-1)}_0 \in \R$, $a^{(m)}_0 \in \U$ and $a^{(0)}_1 = a_1, a^{(1)}_1,...,a^{(m-1)}_1 > 0$ are determined by $a_0,...,a_n$.

For any $w \in \U$, we define $g_w(z) = L[A(a^{(m)}_0, 1)(z)]w = a_0^{(m)} + \frac{z}{1-wz}$, so that $g$ is a degree $1$ rational function in $\Pz$. Note that $g_w$ can be written as the sum of $\im a_0^{(m)}$ and an augmentation of the constant function $w$, and so by the observation above $g_w$ maps $\RH$ into $\U$. Let $F_w$ denote the function $F$ above resulting from the choice $g = g_w$ and note that $F_w$ is obtained by applying $m$ augmentations to $g_w$. Thus $F_w$ is a degree $m+1$ rational function mapping $\RH \setminus \{ 0\}$ into $\U$.
                                                                                                       
It remains to find a $w \in \U$ such that $0$ is a regular value for $F_w$. We write
\[ \begin{bmatrix} p(z) & q(z) \\ r(z) & s(z) \end{bmatrix}
= A(a^{(0)}_0, a^{(0)}_1)(z) \cdots A(a^{(m-1)}_0, a^{(m-1)}_1)(z) A(a^{(m)}_0, 1)(z),\]
so that $p, q, r, s$ are polynomials of degree at most $m+1$, and $F_w(z) = \frac{p(z)w+q(z)}{r(z)w+s(z)}$.
Note that a calculation of determinants shows that $p, q, r, s$ satisfy $(ps-qr)(z) = (a^{(0)}_1)^2z^2 \cdots (a^{(m)}_1)^2z^2 = Kz^{n+2}$ for some $K \neq 0$, and recall that $F_w$ has degree $m+1$. An elementary argument considering degrees and common factors of polynomials shows that $F_w(z) = 0$ has $m+1$ distinct solutions for all but finitely many choices of $w \in \C$.
\end{proof}

Note that if we forgo the requirement that $0$ be a regular value of $f_0$, the choice $g(z) \equiv a_0^{(m)}$ in the above proof suffices, and the degree of $f_0$ is reduced to $m$.

\section{Relations In The Calkin Algebra} \label{Relations In The Calkin Algebra}

In \cite{LRC}, Kriete-Moorhouse investigate compactness of linear combinations of composition operators where the inducing maps lie in the class $\A$. We review some definitions and results from \cite{LRC}, then apply our results from \cref{The CF Problem} to obtain a decomposition of such a composition operator, modulo the ideal $\K$ of compact operators, into a sum of composition operators induced by basic or rational functions (see \cref{definition of basic functions}).

Additionally, we review a result from \cite{LRC} regarding weighted composition operators modulo $\K$ and use this result in the proof of a similar result concerning weighted adjoints of composition operators.

\subsection{Linear Relations in the Calkin Algebra for Composition Operators}
$ $

In \cite{LRC}, Kriete-Moorhouse show that information relating to compactness of a linear combination of compositions operators $c_1 C_{\p_1} + ... +c_r C_{\p_r}$, where $\p_1,...,\p_r \in \A$, is carried by the behavior of the functions $\p_j$ at their points of contact with the unit circle. More precisely, the relevant information for $\p$ at a point of contact $\zeta$ is $D_n(\p, \zeta) = (\p(\zeta), \p'(\zeta)...,\p^{(n)}(\zeta))$, where $n$ is the order of contact of $\p$ with the unit circle at $\zeta$. The following result determines compactness of a linear combination of composition operators for operators induced by functions in $\A$.

\begin{thm}\label{first relation in K}\cite[Theorem~5.13]{LRC} Let $\p_1, . . . , \p_r$ in $\A$ and write $F$ for the union $F(\p_1) \cup ... \cup F(\p_r)$, a finite set. For $\zeta$ in $F$ and $k = 2, 4, 6, . . . ,$ let
\[ \N_k(\zeta) = \{j : \text{ $F(\p_j)$ contains $\zeta$ and $k$ is the order of contact of $\p_j$ at $\zeta$} \} \]
and let
\[ \mathcal{E}_k(\zeta) = \{ D_k(\p_j, \zeta) : \text{$j$ is in $\N_k(\zeta)$} \} .\]
Given complex numbers $c_1, . . . , c_r$, the following are equivalent:
\begin{enumerate}
\item $c_1C_{\p_1} + · · · + c_nC_{\p_r}$ is compact;
\item $\displaystyle \sum_{\substack{j\in \N_k(\zeta) \\ D_k(\p_j ,\zeta)=\textbf{d} }} c_j = 0$ for every $\zeta$ in $F$, every even $k \geq 2$ and every $\textbf{d}$ in $\mathcal{E}_k(\zeta)$.
\end{enumerate}
\end{thm}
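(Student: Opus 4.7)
The plan is to test compactness via normalized reproducing kernels. With $K_w(z)=(1-\bar w z)^{-1}$ and $k_w=(1-|w|^2)^{1/2}K_w$, the kernels $k_w$ tend to $0$ weakly as $|w|\to 1$, so for any compact operator $A$ we have $\|A^* k_{w_n}\|\to 0$ along every sequence $w_n\to\T$. For finite sums $T=\sum_j c_j C_{\p_j}$ of composition operators the converse also holds (a standard fact drawing on the reproducing-kernel structure of $H^2$), giving an equivalent compactness criterion. Since $C_\p^* K_w=K_{\p(w)}$,
\[ \|T^* k_w\|^2=(1-|w|^2)\sum_{i,j} c_i\bar c_j\,\frac{1}{1-\overline{\p_i(w)}\p_j(w)}. \]
If $w_n\to w_0\in\T\setminus F$ then every $\p_j(w_n)$ sits in a compact subset of $\D$, so the factor $1-|w|^2$ drives the sum to $0$. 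Hence the problem localizes to sequences $w_n\to\zeta$ for $\zeta\in F$, and among these only indices $j\in\bigcup_k\N_k(\zeta)$ contribute asymptotically.

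For the reverse direction (ii)$\Rightarrow$(i), partition $\{1,\ldots,r\}$ by their \emph{global contact pattern}: $j\sim j'$ iff at every $\zeta\in F$ either both or neither lies in $F(\p_\bullet)$, and at each common contact point both have the same order of contact $k$ with $D_k(\p_j,\zeta)=D_k(\p_{j'},\zeta)$. Hypothesis (ii) forces $\sum_{j\sim j_0} c_j=0$ in each class, so picking a representative $j_0$ per class we can rewrite
\[ T=\sum_{\text{classes}}\sum_{j\sim j_0} c_j\,(C_{\p_j}-C_{\p_{j_0}}). \]
It then suffices to show $C_{\p_1}-C_{\p_2}$ is compact whenever $\p_1,\p_2$ share a global contact pattern. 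By the kernel test this reduces to
\[ (1-|w_n|^2)\,\frac{|\p_1(w_n)-\p_2(w_n)|^2}{|1-\overline{\p_1(w_n)}\p_2(w_n)|^2}\longrightarrow 0 \]
as $w_n\to\zeta\in F$. Matching data to order $k$ at $\zeta$ gives $\p_1(w)-\p_2(w)=o(|w-\zeta|^k)$ for $w\in\D$ near $\zeta$, the denominator is bounded below by a positive constant times $|w-\zeta|^{2k}$ (this is the $H^2$ translation of order-of-contact $k$), and $1-|w|^2\lesssim|w-\zeta|^2$ nontangentially; combining these three estimates gives the decay.

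For the forward direction (i)$\Rightarrow$(ii), fix $\zeta\in F$ and transfer to the upper half-plane via $\tau_\zeta$. For each $j\in\bigcup_k\N_k(\zeta)$ with order of contact $n_j$, the map $f_j=\tau_{\p_j(\zeta)}\circ\p_j\circ\tau_\zeta^{-1}$ has a pseudo-Taylor expansion at $0$ whose coefficients $a_{0,j},\ldots,a_{n_j-1,j}$ are real and whose leading nonreal coefficient $a_{n_j,j}$ has positive imaginary part. Using the two-parameter test family $w_n(s)=\tau_\zeta^{-1}(\epsilon_n(s+i))$ with $s\in\R$ and $\epsilon_n\to 0^+$, substitute these expansions into the kernel sum and extract the asymptotic expansion in $\epsilon_n$. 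The leading $\epsilon_n$-scale picks out the indices of maximal order of contact; within that scale the remaining expression is a rational function of $s$ whose coefficients (in $s$) are sums of the $c_j$ restricted to fixed values of $D_{n_j}(\p_j,\zeta)$. Vanishing for every $s$ forces the per-data-class cancellation at maximal contact; subtracting off these groups (each a compact combination by the backward direction already proved) yields a combination with strictly smaller effective order of contact at $\zeta$, and one iterates down through the even orders $2,4,\ldots$, then across the points $\zeta\in F$.

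The main obstacle will be the linear independence step underlying the forward direction: at each order-of-contact level one must verify that, as $s$ varies, the kernel pieces associated to distinct data vectors $\mathbf{d}\in\mathcal{E}_k(\zeta)$ are linearly independent, so that the overall vanishing really splits into the per-$\mathbf{d}$ cancellation. The combinatorial content is supplied by \nameref{Faa di Bruno}, which puts the expansion in a normal form where $\mathbf{d}$ can be read off from the coefficients of the resulting $s$-polynomial; this normal-form bookkeeping is the technical core of the Kriete-Moorhouse argument in \cite{LRC}.
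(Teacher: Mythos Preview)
The paper does not prove this statement; Theorem~\ref{first relation in K} is quoted from Kriete--Moorhouse \cite[Theorem~5.13]{LRC} and used as a black box, so there is no in-paper argument to compare your proposal against.

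Your outline is in the spirit of the original Kriete--Moorhouse approach (reproducing-kernel localization, then separation by local data), but as written it has real gaps. The converse kernel criterion you invoke---that $\|T^{*}k_{w_n}\|\to 0$ along every boundary sequence forces $T$ compact---is not a general fact about $H^{2}$ operators; in \cite{LRC} the corresponding step for the class $\A$ goes through Clark-measure and Carleson-measure machinery and is a substantial piece of the argument, not something one can wave past as ``standard.'' Your nontangential estimate $1-|w|^{2}\lesssim|w-\zeta|^{2}$ is off by a power (radially $1-|w|^{2}\sim|w-\zeta|$), and the claimed lower bound $|1-\overline{\p_1(w)}\p_2(w)|^{2}\gtrsim|w-\zeta|^{2k}$ is not what order of contact $k$ gives you directly; one needs the identity $|1-\bar a b|^{2}=(1-|a|^{2})(1-|b|^{2})+|a-b|^{2}$ together with the precise boundary asymptotics built into the definition of $\A$, and the exponents have to be tracked more carefully than your sketch does. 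Finally, you yourself flag that the linear-independence step in (i)$\Rightarrow$(ii)---showing that distinct data vectors $\mathbf d\in\mathcal{E}_k(\zeta)$ produce asymptotically independent kernel pieces---is the technical core, and you defer it to \cite{LRC}; that is honest, but it means what you have is a roadmap rather than a proof.
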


Our goal is to decompose $C_\p$, modulo $\K$, into a sum of composition operators induced by basic functions.

\begin{definition} \label{definition of basic functions} A function $\p$ analytic on $\D$ is a basic function with contact at $\zeta$ if the following hold.
\begin{enumerate}
\item $\p$ is a rational function mapping the unit disk $\D$ into itself.
\item $\p(\zeta)$ is on the unit circle, and $\p$ maps the rest of the unit circle into $\D$.
\item $\p(\zeta)$ is a regular value for $\p$.
\end{enumerate}
\end{definition}

\begin{lem} \label{basic building blocks} Let $\p \in \A$ and $\zeta \in F(\p)$. Let $n=2m$ denote the order of contact of $\p$ with $\T$ at $\zeta$. Then there exists a degree $m+1$ basic function $\p_0$ with order of contact $n$ with $\T$ at $\zeta$ which satisfies $D_{n}(\p_0 ,\zeta) = D_{n}(\p ,\zeta)$.
\end{lem}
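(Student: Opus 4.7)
The plan is to pass to the upper half-plane via the conformal maps $\tau_\alpha$, invoke \cref{existence of basic functions} to build a basic function there, and transfer back.

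First I would set $\alpha = \p(\zeta) \in \T$ and consider $f = \tau_\alpha \circ \p \circ \tau_\zeta^{-1}$. By the discussion in \cref{order of contact subsection}, $f$ maps a neighborhood-of-$0$ piece of $\U$ into $\U$ and has order of contact $n = 2m$ with $\R$ at $0$; in particular, using \fdbf ~applied to the pseudo-Taylor expansion of $\p$ at $\zeta$ and the analytic maps $\tau_\alpha, \tau_\zeta^{-1}$, one sees that $f$ admits a pseudo-Taylor expansion $f(z) = a_0 + a_1 z + \cdots + a_n z^n + o(|z|^n)$ as $z \to 0$ in $\U$, with $a_0, \ldots, a_{n-1} \in \R$ and $a_n \in \U$.

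Next I would apply \cref{existence of basic functions} to the pseudo-Taylor data $a_0, \ldots, a_n$ of $f$, obtaining a rational function $f_0 \in \Pz$ of degree $m+1$ that has $0$ as a regular value, maps $\RH \setminus \{0\}$ into $\U$, and has initial Taylor coefficients $a_0, \ldots, a_n$ at $0$. I would then define
\[ \p_0 = \tau_\alpha^{-1} \circ f_0 \circ \tau_\zeta. \]
Since $\tau_\alpha, \tau_\zeta$ are linear fractional, $\p_0$ is rational of degree $m+1$; since $f_0$ is non-constant with $\im f_0 \geq 0$ on $\U$, the open mapping theorem forces $f_0(\U) \subset \U$, so $\p_0(\D) \subset \D$. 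The boundary behavior of $f_0$ translates to $\p_0(\zeta) = \alpha \in \T$ and $\p_0(\T \setminus \{\zeta\}) \subset \D$, and regularity of $0$ as a value of $f_0$ transfers to regularity of $\alpha$ as a value of $\p_0$. Thus $\p_0$ is a basic function with contact at $\zeta$ in the sense of \cref{definition of basic functions}.

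Finally I would verify $D_n(\p_0, \zeta) = D_n(\p, \zeta)$. By construction $D_n(f_0, 0) = D_n(f, 0)$, and since $\p_0 = \tau_\alpha^{-1} \circ f_0 \circ \tau_\zeta$ and $\p = \tau_\alpha^{-1} \circ f \circ \tau_\zeta$ (on the boundary/pseudo-Taylor level at $\zeta$), \fdbf ~implies $D_n(\p_0, \zeta)$ and $D_n(\p, \zeta)$ are the same function of this common data together with the derivative data of $\tau_\alpha^{-1}$ at $f(0) = a_0$ and of $\tau_\zeta$ at $\zeta$; hence they agree, and the order of contact of $\p_0$ at $\zeta$ is again $n$. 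The only mildly delicate point, and what I would expect to be the main obstacle, is justifying the Faà di Bruno manipulations using only pseudo-Taylor expansions for $\p$ rather than genuine analyticity at $\zeta$; this reduces to noting that the finitely many terms of order $\leq n$ in the formal composition are unaffected by the $o(|z-\zeta|^n)$ remainder, exactly as in the discussion following \cref{Faa di Bruno}.
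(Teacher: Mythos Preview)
Your proposal is correct and follows essentially the same route as the paper's proof: conjugate by the $\tau$-maps to move to the upper half-plane, apply \cref{existence of basic functions} to the pseudo-Taylor data of $f$, and pull the resulting rational $f_0$ back to $\D$, with Fa\`a di Bruno's formula giving $D_n(\p_0,\zeta)=D_n(\p,\zeta)$. The only cosmetic differences are notation ($\alpha$ versus $\lambda$, $f_0$ versus $F$) and your more detailed verification of the basic-function axioms, which the paper leaves implicit.
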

\begin{proof} We begin by defining $\lambda = \p(\zeta)$ and $f = \tau_{\lambda} \circ \p \circ \tau^{-1}_{\zeta}$. Then $f \in \Pick$ and since $\p \in \A$ has contact with $\T$ of order $n$ at $\zeta$, there exist $a_1, a_2, ..., a_{n-1} \in \R$ and $a_n \in \U$ such that $f$ satisfies
\[ f (z) = 0 + a_1 (z-x) + ... +a_n (z-x)^n +o(|z-x|^n)\]
as $z \to 0$ unrestrictedly in $\U$. By \cref{existence of basic functions}, there exists a degree $m+1$ rational function $F \in \Pz$ that maps $\RH \setminus \{0\}$ into $\U$, has initial Taylor coefficients $a_0=0, a_1, ..., a_n$ at $z=0$, and has $z=0$ as a regular value.

We define $\p_0 = \tau_{\lambda}^{-1} \circ F \circ \tau_{\zeta}$ and get that $\p_0$ is a degree $m+1$ basic function with order contact $n$ with $\T$ at $\zeta$. Note that as a consequence of \fdbf ,~ $D_{n}(F, 0) = D_{n}(f, 0)$ implies that $\p_0$ satisfies $D_{n}(\p_0 ,\zeta) = D_{n}(\p ,\zeta)$ as desired.
\end{proof}

\begin{thm}\label{sum decomposition into basics}  Let $\p \in \A$ with $F(\p) = \{ \zeta_1, ..., \zeta_r \}$ and let $n_j = 2 m_j$ denote the order of contact of $\p$ with the unit circle at $\zeta_j$. Then there exists a decomposition,
\[ C_{\p} \equiv C_{\p_1} + ... + C_{\p_r} \modK, \]
where for each $j=1,...,r$, $\p_j$ is a basic function of degree $m_j+1$ which has contact of order $n_j$ at $\zeta_j$ and satisfies $D_{n_j}(\p_j, \zeta_j) = D_{n_j}(\p, \zeta_j)$.
\end{thm}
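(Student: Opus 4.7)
The strategy is straightforward: manufacture the $\p_j$'s using \cref{basic building blocks}, then appeal to the Kriete-Moorhouse compactness criterion (\cref{first relation in K}) to conclude that $C_\p - \sum_{j=1}^r C_{\p_j}$ is compact. The essential point is that each basic function $\p_j$ has contact with $\T$ only at $\zeta_j$ (by its very definition), so assembling them does not create spurious contact data at other contact points of $\p$.

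First, for each $j=1,\dots,r$, apply \cref{basic building blocks} at $\zeta_j$ to produce a basic function $\p_j$ of degree $m_j+1$ with order of contact $n_j$ at $\zeta_j$ and with $D_{n_j}(\p_j,\zeta_j)=D_{n_j}(\p,\zeta_j)$. Next, check that each $\p_j$ lies in $\A$: as a rational self-map of $\D$, the only possible obstruction to extending analytically across $\T$ is a pole on $\T$, which is ruled out by the fact that $\p_j$ continuously maps $\T\setminus\{\zeta_j\}$ into $\D$ and $\zeta_j\mapsto\p_j(\zeta_j)\in\T$. Since $\p_j$ maps $\T\setminus\{\zeta_j\}$ strictly inside $\D$, it cannot be a finite Blaschke product. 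So by \cref{A is big}, $\p_j\in\A$, and in particular $F(\p_j)=\{\zeta_j\}$.

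Now apply \cref{first relation in K} to the combination $C_\p - C_{\p_1} - \cdots - C_{\p_r}$, with coefficients $c_0=1$ (for $\p$) and $c_j=-1$ (for $\p_j$). The union of fixed-point sets is $F=\{\zeta_1,\dots,\zeta_r\}$, since each $F(\p_j)=\{\zeta_j\}\subset F(\p)$. Fix $\zeta_i\in F$ and an even integer $k\ge 2$. If $k\ne n_i$ then neither $\p$ (whose order of contact at $\zeta_i$ is $n_i$) nor any $\p_j$ (with contact at $\zeta_j\ne\zeta_i$ for $j\ne i$, and contact of order $n_i$ for $j=i$) belongs to $\mathcal{N}_k(\zeta_i)$, so the sum is vacuously zero. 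If $k=n_i$ then $\mathcal{N}_{n_i}(\zeta_i)$ consists precisely of the indices corresponding to $\p$ and $\p_i$; both carry the same data $D_{n_i}(\p,\zeta_i)=D_{n_i}(\p_i,\zeta_i)$ by the construction of $\p_i$, and their coefficients $1$ and $-1$ sum to $0$. For any other data vector $\mathbf{d}\in\mathcal{E}_{n_i}(\zeta_i)$ the sum is again empty. Hence condition (ii) of \cref{first relation in K} holds, and $C_\p - \sum_{j=1}^r C_{\p_j}$ is compact, giving the stated congruence mod $\K$.

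The main (mild) obstacle is verifying the bookkeeping: ensuring every basic function $\p_j$ really lies in $\A$ so that \cref{first relation in K} is applicable, and checking that the data-matching at $\zeta_i$ combined with the basic functions having contact only at a single boundary point causes every relevant sum to cancel. Both are clean once one remembers that a basic function has contact with $\T$ at exactly one point.
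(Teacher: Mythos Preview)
Your proof is correct and follows exactly the same approach as the paper: construct the $\p_j$ via \cref{basic building blocks} and then apply \cref{first relation in K} with coefficients $1,-1,\dots,-1$. You have simply fleshed out the verification that each $\p_j\in\A$ and the bookkeeping for condition (ii), which the paper leaves implicit.
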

\begin{proof}
Existence of $\p_1,...,\p_r$ follows from \cref{basic building blocks}. The result follows by applying \cref{first relation in K} to $\p, \p_1, ... \p_n$ with constants $1, -1,...,-1$.
\end{proof}

Note that we can reduce the degree of the rational maps $\p_1, ..., \p_r$ in the above decomposition to $m_1, ..., m_r$ respectively by relaxing the condition that these functions be basic and requiring only that they be rational self-maps of $\D$ having contact with $\T$ at exactly one point.

\subsection{Weighted Composition Operators and Adjoints in the Calkin Algebra}
$ $

Given a bounded measurable function $w$ on $\T$, we consider the multiplication operator $M_w \colon f \mapsto wf$ which can be viewed as mapping $L^2$ to $L^2$, $H^2$ to $L^2$, or if $w$ is in $H^\infty$, $H^2$ to $H^2$. Additionally, we consider the Toeplitz operator $\displaystyle T_w = \left.PM_w\right|_{H^2}$ (where $P$ is the orthogonal projection of $L^2$ onto $H^2$) which maps $H^2$ to $H^2$ regardless of the choice of $w$.

In \cite{LRC}, it is shown that the coset of the weighted composition operator $M_w C_\p$ modulo the subspace of compact operators from $H^2$ to $L^2$ (also denoted by $\K$ here for convenience) is in some sense determined by the values of $w$ on $E(\p)$. For the special case that $\p \in \A$ has contact with $\T$ at exactly one point, we have the following corollary.

\begin{cor}\label{M_w C_p formula} Let $\p \in \A$ be such that $F(\p) = \{\zeta\}$. Suppose $w$ is a bounded measurable function on $\T$ such that $w$ is continuous at $\zeta$. Then the $H^2$ to $L^2$ operator $M_w C_{\p}$ satisfies
\[ M_w C_{\p} \equiv w(\zeta)C_{\p} \modK. \]
\end{cor}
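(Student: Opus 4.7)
The plan is to reduce the corollary to a single compactness statement. Writing
\[ M_w C_\p = w(\zeta)\, C_\p + M_{w - w(\zeta)} C_\p, \]
the claim is equivalent to showing that $M_u C_\p$ is a compact operator from $H^2$ to $L^2$ whenever $u$ is a bounded measurable function on $\T$ that is continuous at $\zeta$ with $u(\zeta)=0$. Here I am using the standing hypothesis $F(\p) = \{\zeta\}$, which for $\p \in \A$ implies $E(\p) = F(\p) = \{\zeta\}$ since there are no continuous singular Clark measures when $E(\p)$ is finite.

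To handle the remainder $M_u C_\p$, I would invoke the Kriete--Moorhouse result on cosets of weighted composition operators modulo $\K$, mentioned in the paragraph preceding the corollary: the coset $M_w C_\p + \K$ is determined by the boundary values of $w$ on $E(\p)$. Applied to $u$, which is continuous at the single point $\zeta = E(\p)$ and vanishes there, this immediately gives $M_u C_\p \equiv 0 \pmod{\K}$. Adding $w(\zeta) C_\p$ back yields the stated congruence.

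The main obstacle is not the algebra but a careful citation: one must verify that the version of the Kriete--Moorhouse theorem being invoked really does apply to arbitrary bounded measurable $w$ (as opposed to, say, $w \in C(\T)$ or $w \in H^\infty$), and that its hypothesis of ``vanishing on $E(\p)$'' is interpreted consistently with ``continuous at $\zeta$ with $u(\zeta) = 0$.'' Since $E(\p)$ is a single point, any reasonable notion of vanishing there is implied by pointwise continuity and vanishing at $\zeta$, so the translation should be routine. Beyond this bookkeeping, no new idea is required; the corollary is a direct specialization of the general theorem to the case of one contact point.
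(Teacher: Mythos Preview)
Your proposal is correct and is essentially identical to the paper's proof: the paper sets $v = w - w(\zeta)$, notes that $v$ is bounded, continuous at $\zeta$, and vanishes on $E(\p) = F(\p) = \{\zeta\}$, and then invokes \cite[Theorem~3.1]{LRC} to conclude that $M_v C_\p$ is compact. Your only extra content is the meta-commentary about checking the hypotheses of the cited theorem, which the paper takes for granted.
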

\begin{proof}
Let $v(z) := w(z) - w(\zeta)$. Then $v$ is bounded on $\T$, continuous at $\zeta$ and satisfies $v \equiv 0$ on $E(\p) = F(\p) = \{ \zeta \}$. Thus by \cite[Theorem~3.1]{LRC}, $M_v C_{\p} = M_w C_{\p} - w(\zeta)C_{\p}$ is compact.
\end{proof}

In \cref{M_w C_p star formula} we prove a similar result for weighted adjoints of composition operators. The proof relies on existence of an $H^{\infty}$ function which satisfies several boundary conditions.

\begin{lem}\label{technical lemma} Let $I$ be an open arc in $\T$ and suppose $\lambda \in I$. Let $v$ be a non-negative bounded function on $\T$ which is continuous on $I$, continuously differentiable on $I \setminus \{ \lambda \}$ and satisfies $v(\lambda)=0$. Then there exists an analytic function $b$ on $\D$ which extends continuously to $\ClD$ and satisfies both $b(\lambda) = 0$ and $|b(e^{i\theta})| \geq v(e^{i\theta})$ on $\T$.
\end{lem}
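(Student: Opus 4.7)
My plan is to take $b$ as the outer function whose boundary modulus is a carefully chosen continuous dominating function of $v$. Such a $b$ lies automatically in $H^\infty$ with $|b|\geq v$ on $\T$; the real work is arranging enough regularity in the dominating function that $b$ extends continuously to $\ClD$ with $b(\lambda)=0$.

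First I would construct a continuous function $V\colon \T\to [0,\infty)$ satisfying $V\geq v$ on $\T$, $V(\lambda)=0$, $V>0$ on $\T\setminus\{\lambda\}$, $V$ continuously differentiable on $\T\setminus\{\lambda\}$, and $\log V\in L^1(\T)$. Fix nested open arcs $I'\Subset I''\Subset I$ containing $\lambda$, a $C^\infty$ cutoff $\chi\colon \T\to [0,1]$ with $\chi=1$ on $\overline{I'}$ and $\chi=0$ off $I''$, a constant $M_0>\|v\|_\infty$, and a small $\epsilon>0$. Define
\[
V(\zeta) = \chi(\zeta)\,v(\zeta) + \bigl(1-\chi(\zeta)\bigr)M_0 + \epsilon\bigl(1-\re(\bar\lambda\zeta)\bigr).
\]
Each listed property follows from routine checks using the $C^1$-regularity of $v$ on $I\setminus\{\lambda\}$, smoothness of $\chi$, the inequality $\chi v + (1-\chi)M_0\geq v$ (valid since $v\leq M_0$), and the estimate $1-\re(\bar\lambda\zeta)\gtrsim |\zeta-\lambda|^2$ near $\lambda$ to get log-integrability.

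Next let $b$ be the outer function with $|b|=V$ on $\T$,
\[
b(z) = \exp\!\left(\frac{1}{2\pi}\int_0^{2\pi}\frac{e^{it}+z}{e^{it}-z}\log V(e^{it})\,dt\right),
\]
so that $b\in H^\infty$ and $|b(e^{i\theta})|=V(e^{i\theta})\geq v(e^{i\theta})$ a.e. I would then verify that $b$ extends continuously to $\ClD$ by treating $\lambda$ and the remaining boundary points separately. At $\lambda$: for each $K>0$, continuity of $V$ at $\lambda$ with $V(\lambda)=0$ gives a neighborhood on which $\log V<-K$, and as $z\to\lambda$ inside $\D$ the Poisson kernel $P_z$ concentrates there, so a standard splitting argument gives $\re\log b(z)=P[\log V](z)\to -\infty$, hence $|b(z)|\to 0$ and we may set $b(\lambda)=0$. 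At any $\zeta_0\in \T\setminus\{\lambda\}$: $\log V$ is $C^1$ in a neighborhood of $\zeta_0$, hence Dini continuous there, so by a localized form of Privalov's theorem (split $\log V$ into a Dini piece supported near $\zeta_0$, whose harmonic conjugate is continuous at $\zeta_0$, and a remainder supported away from $\zeta_0$, whose Poisson extension is real-analytic at $\zeta_0$) the harmonic conjugate of $\log V$ extends continuously across $\zeta_0$, and combined with continuity of $\log|b|$ this yields continuity of $b$ at $\zeta_0$.

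The main obstacle is continuity of $b$ on the full boundary, not merely inside $\D$: the singularity of $\log V$ at $\lambda$ drives the vanishing of $b$ there and must be handled via Poisson concentration, while at the other boundary points continuity of $\arg b$ is the delicate ingredient and motivates engineering $C^1$-smoothness of $V$ off $\lambda$ into the construction so that Privalov's theorem applies.
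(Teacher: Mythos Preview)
Your argument is correct and follows essentially the same route as the paper: construct a continuous dominating function that is $C^1$ on $\T\setminus\{\lambda\}$, vanishes only at $\lambda$, and has integrable logarithm, then take the associated outer function and verify continuity on $\ClD$ by Poisson concentration at $\lambda$ and a Privalov-type conjugate-function argument elsewhere. The only cosmetic differences are in the construction of the dominant---the paper adds $|e^{i\theta}-\lambda|$ on a subarc and extends, while you use a smooth cutoff plus $\epsilon(1-\re(\bar\lambda\zeta))$---but the analytic content is identical.
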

\begin{proof}
Let $J$ be a closed sub-interval of $I$ whose interior contains $\lambda$, and define $u$ on $J$ by $u(e^{i \theta}) = v(e^{i \theta}) + |e^{i \theta} - \lambda|$ for  $e^{i \theta} \in J$. We extend $u$ to all of $\T$ in such a way that $u$ is continuously differentiable on $\T \setminus \{ \lambda \}$ and satisfies
\[ u(e^{i \theta}) \geq v(e^{i \theta}) + |e^{i \theta} - \lambda| \]
for all $e^{i \theta} \in \T$.
Then $\log u$ is integrable on $\T$ and so we can define an analytic function $h$ on $\D$ by the Herglotz integral
\[ h(z) = \int_0^{2\pi} \frac{e^{i\theta} + z}{e^{i\theta} - z} \log u(e^{i\theta})\frac{d\theta}{2\pi}. \]
Note that since $\log u$ is continuously differential on $\T \setminus \{ \lambda \}$, $h$ extends continuously to $\T \setminus \{ \lambda \}$, see \cite[pp~78-80]{MR0133008}.

Let $b = e^h$, so that $b$ is a bounded analytic function on $\D$ that extends continuously to $\T \setminus \{ \lambda \}$. Moreover,
\[ |b(z)| = \exp \left( \int_0^{2\pi} P_z(e^{i\theta}) \log u(e^{i\theta})\frac{d\theta}{2\pi} \right), \]
where $P_z(e^{i\theta})$ is the Poisson kernel at $z$. Since $\log u(e^{i\theta}) \to -\infty$ as $e^{i\theta} \to \lambda$, standard estimates on $P_z$ show that $b(z) \to 0$ as $z \to \lambda$. Thus $b$ extends continuously to all of $\T$ with $b(\lambda) = 0$. Finally, we note that $|b(e^{i\theta})| = |u(e^{i\theta})| \geq v(e^{i\theta})$ on $\T$.

\end{proof}

\begin{prop}\label{M_w C_p star formula} Let $\p \in \A$ be such that $F(\p) = \{\zeta\}$ and denote $\lambda = \p(\zeta)$. Suppose that $w$ is a bounded measurable function on $\T$ such that $w$ is continuous on $I$ and continuously differentiable on $I \setminus \{ \lambda \}$ for some open arc $I$ in $\T$ containing $\lambda$. Then
\[ M_w C_{\p}^* \equiv w(\lambda)C_{\p}^* \modK, \]
where $M_w$ is viewed as an operator from $H^2$ to $L^2$, and in particular,
\[ T_w C_{\p}^* \equiv w(\lambda)C_{\p}^* \modK. \]
\end{prop}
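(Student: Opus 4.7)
The plan is to show $M_v C_\p^*$ is compact for $v = w - w(\lambda)$, from which the congruence follows by adding $w(\lambda) C_\p^*$ back. Here $v$ is bounded measurable on $\T$, continuous on $I$, $C^1$ on $I \setminus \{\lambda\}$, and vanishes at $\lambda$. The strategy is to use \cref{technical lemma} to factor $v = \bar b h$ with $b \in A(\D)$, $b(\lambda) = 0$, and $\|h\|_\infty \leq 1$, and then reduce compactness of $M_v C_\p^*$ to compactness of $M_{\bar b} C_\p^*$, which is handled via a Toeplitz--Hankel decomposition.

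Apply \cref{technical lemma} to a non-negative $C^1$ majorant $V$ of $|v|$ vanishing at $\lambda$---for example, $V(e^{i\theta}) = \sqrt{|v(e^{i\theta})|^2 + |e^{i\theta}-\lambda|^2}$ on $I$, suitably extended to a bounded $C^1$ function on $\T \setminus \{\lambda\}$---to obtain $b \in A(\D)$ with $b(\lambda) = 0$ and $|b| \geq |v|$ on $\T$. Set $h = v/\bar b$ where $b \neq 0$ and $h = 0$ elsewhere; then $v = \bar b h$ a.e.\ on $\T$ with $\|h\|_\infty \leq 1$, so $M_v C_\p^* = M_h M_{\bar b} C_\p^*$, and since $M_h$ is bounded it suffices to show $M_{\bar b} C_\p^* \colon H^2 \to L^2$ is compact. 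Decompose $M_{\bar b}|_{H^2} = T_{\bar b} + H_{\bar b}$ into Toeplitz and Hankel parts. Since $\bar b \in C(\T)$, Hartman's theorem gives that $H_{\bar b}$ is compact, so $H_{\bar b} C_\p^*$ is compact. For the Toeplitz piece, taking adjoints on $H^2$ gives $(T_{\bar b} C_\p^*)^* = C_\p T_b$, and a direct computation (using $b \in H^\infty$) yields $C_\p T_b = M_{b \circ \p} C_\p$. The function $b \circ \p$ is bounded, continuous at $\zeta$ (using continuity of $b$ on $\ClD$ and continuity of $\p|_\T$ at $\zeta$, which follows from the pseudo-Taylor expansion for $\p \in \A$), and satisfies $(b \circ \p)(\zeta) = b(\lambda) = 0$; \cref{M_w C_p formula} then gives compactness of $M_{b \circ \p} C_\p$, hence of $T_{\bar b} C_\p^*$. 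Combining, $M_{\bar b} C_\p^*$ is compact, which establishes the first claim. The Toeplitz version follows by applying $P$: $T_w C_\p^* = P(M_w C_\p^*) \equiv P w(\lambda) C_\p^* = w(\lambda) C_\p^* \modK$.

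I expect the main obstacle to be the construction of the $C^1$ majorant needed for \cref{technical lemma}: $|v|$ itself may fail to be $C^1$ at zeros of $v$ in $I \setminus \{\lambda\}$, so one cannot apply the lemma directly to $|v|$. The hypothesis that $w$ be continuously differentiable on $I \setminus \{\lambda\}$ appears tailored precisely to make this construction possible.
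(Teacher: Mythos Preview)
Your proof is correct and follows essentially the same route as the paper: reduce to $v = w - w(\lambda)$, dominate $|v|$ by $|b|$ via \cref{technical lemma}, split $M_{\bar b}|_{H^2}$ into its Toeplitz and Hankel parts, dispose of the Hankel piece by Hartman's theorem, and handle $T_{\bar b}C_\p^*$ by passing to the adjoint $C_\p M_b = M_{b\circ\p}C_\p$ and invoking \cref{M_w C_p formula}. Your explicit $C^1$ majorant $V = \sqrt{|v|^2 + |e^{i\theta}-\lambda|^2}$ is in fact more careful than the paper, which applies \cref{technical lemma} directly to the complex-valued $v$ and thereby glosses over precisely the differentiability issue you flag.
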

\begin{proof}
It suffices to prove that $M_v C_{\p}^*$ is compact where $v = w - w(\lambda)$ since
\[ M_w C_{\p}^* = (M_{w-w(\lambda)} + M_{w(\lambda)})C_{\p}^* = M_{v}C_{\p}^* +w(\lambda)C_{\p}^*. \]
Note that $v$ is continuous on $I$ and continuously differentiable on $I \setminus \{ \lambda \}$ and satisfies $v(\lambda) = 0$, and so by \cref{technical lemma} there exists a $b \in H^{\infty}$ which extends continuously to $\T$ and satisfies $b(\lambda) = 0$ and $|b(e^{i\theta})| \geq v(e^{i\theta})$ on $\T$. We get that for all $f \in H^2$,
\begin{align*}
\|M_v C_{\p}^*f\|^2 &= \int_0^{2\pi} |v(e^{i\theta})|^2 |(C_{\p}^*f)(e^{i\theta})|^2 \frac{d\theta}{2\pi} \\
&\leq \int_0^{2\pi} |\ol{b(e^{i\theta})}|^2 |(C_{\p}^*f)(e^{i\theta})|^2 \frac{d\theta}{2\pi} = \|M_{\ol{b}} C_{\p}^*f\|^2,
\end{align*}
and so it suffices to show that $M_{\ol{b}} C_{\p}^*$ is a compact operator from $H^2$ to $L^2$. We write
\[ M_{\ol{b}} C_{\p}^* = PM_{\ol{b}} C_{\p}^* +(I-P)M_{\ol{b}} C_{\p}^* \]
and show that both terms on the right hand side are compact.

We first note that since $\ol{b}$ is a continuous function on $\T$, the $L^2$ operator ${(I-P)M_{\ol{b}}P}$ is compact (see the version of Hartman's theorem in \cite[p~214, Theorem~2.2.5]{MR1864396}). Thus, the term $(I-P)M_{\ol{b}} C_{\p}^* = (I-P)M_{\ol{b}}P C_{\p}^*$ is compact from $H^2$ to $L^2$.

We now show $PM_{\ol{b}} C_{\p}^* = T_{\ol{b}} C_{\p}^*$ is compact on $H^2$ by looking at its adjoint $(T_{\ol{b}} C_{\p}^*)^* = C_{\p} T_b$. Since $b \in H^{\infty}$, we have $C_{\p} T_b = C_{\p} M_b = M_{b \circ \p} C_{\p}$. We wish to apply \cref{M_w C_p formula} to $M_{b \circ \p} C_{\p}$. Recall that $\p \in \A$ with $F(\p) = \{ \zeta \}$, and note that although the non-tangential boundary function $\p(e^{i\theta})$ is in general defined only almost everywhere, we can extend it to all of $\T$ by setting $\p(e^{i\theta}) = \p(\zeta)$ on the remaining set of measure zero. It follows from the definition of $\A$ that this extension (which we also call $\p$) is continuous at $\zeta$. Thus $b \circ \p$ is continuous at $\zeta$ and $b(\p(\zeta)) = b(\lambda) = 0$, and so \cref{M_w C_p formula} implies that $M_{b \circ \p} C_{\p}$ is compact as desired.
\end{proof}

\section{Adjoint Formula for Rationally Induced Composition Operators} \label{Adjoint Formula for Rational Case}

Recent work of Cowen-Gallardo \cite{MR2253727}, Hammond-Moorhouse-Robbins \cite{MR2394110} and Bourdon-Shapiro \cite{BS} has produced pointwise formulas for $C_\p^{*}$, where the inducing map $\p$ is rational. The constituent parts of these pointwise formulas contain multiple-valued analytic functions which do not necessarily represent well-defined operators individually. We show how to work with these pointwise formulas to produce legitimate operator equations involving $C_\p^{*}$ for the rational case. We then consider the case where $\p$ is basic and reduce our equations to the Calkin algebra.

\subsection{From Pointwise Formula To Operator Equation} \label{rational inducing map}
$ $

Let $\p$ be a rational self-map of $\D$ of degree $d$. We associate with $\p$ its exterior map $\p_e := \rho \circ \p \circ \rho$, where $\rho \colon z \to 1/\ol{z}$ is the inversion in the unit circle. Then $\p_e$ maps $\D_e := \{ z \in \C \colon |z|>1 \}$ into itself, and so $\p_e^{-1}(\D) \subset \D$.

For any simply connected domain $V$ consisting of regular values of $\p_e$, there exist $d$ distinct branches $\s_1,...,\s_d$ of $\p_e^{-1}$ defined on $V$, and we have that the sets $\s_1(V), ..., \s_d(V)$ are pairwise disjoint (see \cite{BS}). 
Note that one possible choice of $V$ is the unit disk with radial slits from each critical value of $\p_e$ to the unit circle removed. A choice that may be much smaller but sufficient for our needs is a small neighborhood of a regular value of $\p_e$.

We use the following variant of the pointwise formula for $C_\p^*$ introduced by Bourdon-Shapiro in \cite{BS}.

\begin{prop}\label{pointwise adjoint formula}\cite[Corollary~8]{BS} Suppose that $V$ is a set on which $d$ distinct branches $\s_1,...,\s_d$ of $\p_e^{-1}$ are defined. Then for all $f \in H^2$ and all $z \in V \cap \D$,
\begin{align}\label{pointwise adjoint formula - the formula}
C_\p^*f(z) = \frac{f(0)}{1-\ol{\p(0)}z} +\sum_{j=1}^{d} z \s'_j(z)S^*f(\s_j(z)),
\end{align}
where $S^*$ is the adjoint of the shift operator $S$ defined by $(S f)(z) = zf(z)$.
\end{prop}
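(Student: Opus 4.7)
The plan is to derive the pointwise formula by rewriting $C_\p^*$ as a contour integral and evaluating it by residues. Starting from the reproducing-kernel identity $C_\p^*f(z) = \langle f,\, C_\p K_z\rangle$, with $K_z(\zeta) = (1-\ol{z}\zeta)^{-1}$, one obtains
\[ C_\p^*f(z) = \int_0^{2\pi}\frac{f(e^{it})}{1-z\,\ol{\p(e^{it})}}\,\frac{dt}{2\pi}. \]
Since $\rho$ fixes $\T$ pointwise, the boundary identity $\ol{\p(\zeta)} = 1/\p_e(\zeta)$ holds a.e.\ on $\T$, so the integral becomes the rational contour integral
\[ C_\p^*f(z) = \frac{1}{2\pi i}\oint_{\T}\frac{\p_e(\zeta)\,f(\zeta)}{\zeta\bigl(\p_e(\zeta)-z\bigr)}\,d\zeta. \]

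I would first establish the formula for $f$ analytic in a neighborhood of $\ClD$ (a dense subspace of $H^2$) via the residue theorem, then extend by density. Inside $\D$, the only poles of the integrand are $\zeta = 0$ and the points $\zeta = \s_j(z)$ for $j=1,\ldots,d$: the factor $\p_e(\zeta)$ appearing in both the numerator and in the denominator factor $\p_e(\zeta)-z$ cancels any zero or pole of $\p_e$ inside $\D$. Using $\p_e(\s_j(z)) = z$ together with the inverse-function relation $\s_j'(z) = 1/\p_e'(\s_j(z))$, the residue at $\s_j(z)$ simplifies to $z\s_j'(z)f(\s_j(z))/\s_j(z)$. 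The identity $S^*f(w) = (f(w)-f(0))/w$ then splits each such residue as
\[ \frac{z\s_j'(z) f(\s_j(z))}{\s_j(z)} = z\s_j'(z)\,S^*f(\s_j(z)) + f(0)\cdot\frac{z\s_j'(z)}{\s_j(z)}, \]
reducing the target formula to the $f$-independent scalar identity
\[ \operatorname{Res}_{\zeta=0}\frac{\p_e(\zeta)}{\zeta(\p_e(\zeta)-z)} + \sum_{j=1}^{d}\frac{z\,\s_j'(z)}{\s_j(z)} = \frac{1}{1-\ol{\p(0)}\,z}. \]

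To prove this identity I would write $\p_e = P/Q$ with $P,Q$ explicit polynomials, realize $\s_1(z),\ldots,\s_d(z)$ as the roots of $P(\zeta)-zQ(\zeta) = 0$, and apply Vieta's formula so that $\prod_j \s_j(z)$ equals, up to sign, the ratio of the constant and leading coefficients of $P - zQ$ in $\zeta$, both of which are affine in $z$. Then $\sum_j \s_j'(z)/\s_j(z) = \tfrac{d}{dz}\log\prod_j \s_j(z)$ becomes an explicit rational function of $z$, which combines telescopically with the residue at $\zeta = 0$ (evaluating to $1/(1-z\ol{\p(\infty)})$ under the standard convention for $\p(\infty)\in\CH$) to produce the right-hand side. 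The density extension to all $f\in H^2$ is automatic, because at fixed $z\in V\cap\D$ both sides of the formula are bounded linear functionals of $f$ on $H^2$; here one uses that each $\s_j(z)$ lies strictly inside $\D$, since $\p_e$ maps $\T$ off of $\D$. The main obstacle is the minor case analysis needed for the algebraic identity, depending on whether $\p_e$ has a pole, a zero, or a regular point at $\zeta = 0$ (equivalently, on the comparison of $\deg p$ and $\deg q$ when $\p = p/q$).
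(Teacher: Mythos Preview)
The paper does not give a proof of this proposition; it is quoted verbatim as \cite[Corollary~8]{BS} and used as input. So there is no ``paper's own proof'' to compare against.

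That said, your argument is correct and is essentially the method used in the cited literature (Cowen--Gallardo, Hammond--Moorhouse--Robbins, Bourdon--Shapiro): pair with the reproducing kernel, use $\ol{\p(e^{it})}=1/\p_e(e^{it})$ to rewrite the inner product as a contour integral over $\T$, and evaluate by residues. Two small points worth recording. First, the claim that the only poles in $\D$ are at $0$ and at the $\s_j(z)$ is justified because for $z\in\D$ the equation $\p_e(\zeta)=z$ has exactly $d$ finite solutions (since $\p_e(\infty)=1/\ol{\p(0)}\notin\ClD$), all lying in $\D$ by the inclusion $\p_e^{-1}(\D)\subset\D$, while poles of $\p_e$ cancel in the ratio $\p_e/(\p_e-z)$. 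Second, the scalar identity you isolate follows cleanly from Vieta exactly as you outline: writing $\p_e=P/Q$ with $\gcd(P,Q)=1$, the polynomial $P(\zeta)-zQ(\zeta)$ has degree $d$ in $\zeta$ for every $z\in\D$, and Vieta gives
\[
\prod_{j=1}^d \s_j(z)=\text{const}\cdot\frac{\p_e(0)-z}{\p_e(\infty)-z},
\]
so that
\[
z\,\frac{d}{dz}\log\prod_{j}\s_j(z)=\frac{z}{\p_e(\infty)-z}-\frac{z}{\p_e(0)-z}.
\]
Adding the residue $\p_e(0)/(\p_e(0)-z)$ at $\zeta=0$ collapses this to $\p_e(\infty)/(\p_e(\infty)-z)=1/(1-\ol{\p(0)}z)$, since $\p_e(\infty)=1/\ol{\p(0)}$. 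The degenerate cases ($\p_e(0)=0$, $\p_e(0)=\infty$, or $\p(0)=0$) are handled by the same computation with the obvious limiting conventions, so the ``minor case analysis'' you anticipate really is minor. The density extension is valid for the reason you give.
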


Note that \cref{pointwise adjoint formula - the formula} can be rewritten, at least formally, as
\begin{align} \label{wishful thinking formula}
C_\p^* = \Lambda + \sum_{j=1}^{d} M_{h_j}C_{\s_j}S^*,
\end{align}
where $h_j(z) = z \s_j'(z)$, and $\Lambda$ is the rank one operator defined by $\Lambda(f) := \frac{f(0)}{1-\ol{\p(0)}z}$. However, the maps $\s_j$ are not in general analytic on all of $\D$, and so the operators on the right hand side are not in general ``legitimate'' operators.

Bourdon-Shapiro define $\p$ as outer regular when its critical values all lie in $\D$. Note that for outer regular functions $\p$ we can choose $V = r\D$ for some $r>1$. Then, restricting domains to $\D$, we have that $\s_1,...,\s_d$ are analytic self-maps of $\D$ and $h_1,...,h_d$ are $H^{\infty}$ functions. Thus, equation \cref{wishful thinking formula} is a legitimate operator equation in the outer regular case \cite[Theorem~13(a)]{BS}. Unfortunately, the outer regular case is only possible for order of contact $2$ functions.

\begin{prop} If $\p$ is a rational self-map of $\D$ having order of contact $n>2$ with the unit circle at $\zeta$, then $\p$ is not outer regular.
\end{prop}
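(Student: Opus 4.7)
The plan is to transfer to the upper half-plane and run a Riemann--Hurwitz count on $f^{-1}(\RH)$. Set $\lambda = \p(\zeta)$ and $f = \tau_\lambda \circ \p \circ \tau_\zeta^{-1}$; this is a rational self-map of $\U$ of degree $d = \deg \p$ with $f(0) = 0$, and by the conformal transfer discussed after the half-plane definition of order of contact it has order of contact $n > 2$ with $\R$ at $0$. Because $\tau_\lambda$ is a M\"obius bijection of $\CH$ carrying $\D$ onto $\U$ and critical values of $\p$ to critical values of $f$, the map $\p$ is outer regular if and only if every critical value of $f$ lies in $\U$.

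Suppose for contradiction that $f$ is outer regular. Since $\RH \subset \CH$ then contains no critical value of $f$, the preimage $f^{-1}(\RH)$ is a smooth real $1$-manifold in $\CH \cong S^2$, hence a disjoint union of $c$ simple closed curves cutting $\CH$ into $c+1$ open disk regions. Each region $R_i$ is carried by $f$ (by continuity and connectedness) onto either $\U$ or the open lower half-plane as a proper holomorphic branched cover of some degree $d_i$, and Riemann--Hurwitz applied to $f|_{R_i}$ between topological disks contributes $d_i - 1$ interior ramification. Since every point of $\U$ (respectively, of the lower half-plane) has exactly $d$ preimages under $f$, one has $\sum d_i = 2d$, so the total ramification equals $2d - (c+1)$. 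Comparing with the global Riemann--Hurwitz count $2d - 2$ for $f \colon \CH \to \CH$ forces $c = 1$, giving exactly two regions $U_1 \supseteq \U$ and $U_2$ that cover $\U$ and the lower half-plane, respectively, each with degree $d$.

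But then $f|_{U_2}$ contains $d - 1$ critical points whose critical values lie in the lower half-plane, contradicting outer regularity unless $d = 1$. Finally, a M\"obius self-map $f$ of $\U$ with $f(0) = 0$ and $f'(0) > 0$ has the form $f(z) = \alpha z / (\beta z + 1)$ with $\alpha > 0$ and $\im \beta \le 0$, and its Taylor coefficients $a_k = \alpha(-\beta)^{k-1}$ show that either $\beta \in \R$ (so $f$ preserves $\R$ and has no finite order of contact) or $\im a_2 \ne 0$ (so the order of contact equals $2$); in either case $n > 2$ is impossible. The main obstacle is the Riemann--Hurwitz bookkeeping: verifying smoothness of $f^{-1}(\RH)$ under the outer-regular assumption, that summing interior ramifications region-by-region reproduces the global count, and that surjectivity of $f$ forces exactly one of the two regions to cover each half-plane.
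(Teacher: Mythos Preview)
Your Riemann--Hurwitz approach has a genuine gap at the step where you assert that $c$ disjoint simple closed curves cut $\CH$ into $c+1$ \emph{open disk} regions. Removing $c$ disjoint circles from the sphere gives $c+1$ components, but they are not all disks in general (two nested circles produce an annulus). Once you drop the disk assumption, the bookkeeping no longer forces $c=1$. In fact, under outer regularity the correct picture is this: since $\RH$ contains no critical values, each component of $f^{-1}(\RH)$ covers $\RH$, and each component $R_i$ mapping to the lower half-plane $L$ is an unbranched cover of the simply connected region $L$, hence a degree-$1$ disk. There are $d$ such disks (their boundaries are $d$ degree-$1$ circles over $\RH$, exhausting the total degree), so $c=d$. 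The one remaining region $R_1 \supseteq \U$ has $\chi(R_1) = 2 - d$ and carries all $2d-2$ critical points, with critical values in $\U$. This is perfectly consistent with outer regularity for every $d \ge 1$, so no contradiction is reached and you never arrive at the M\"obius case. The hypothesis $n>2$ is not used anywhere in the main argument.

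The paper's proof is entirely different and genuinely exploits $n>2$. It uses that if $\p$ is outer regular then a branch $\s$ of $\p_e^{-1}$ fixing the contact point is defined on all of $\D$; both $\pt$ and $\st$ then lie in $\Pz$ with order of contact $n$ at $0$, so by the boundary \CF criterion (\cref{CFP condition}) the Hankel matrices $H_m$ built from their Taylor coefficients are positive definite, and in particular the $2\times 2$ leading minors $a_1a_3-a_2^2$ and $b_1b_3-b_2^2$ are positive. Using $\pet \circ \st = \mathrm{id}$ and $D_{n-1}(\pt,0)=D_{n-1}(\pet,0)$ (\cref{derivatives of p and pe}), one computes via Fa\`a di Bruno that $b_1b_3-b_2^2 = -(a_1a_3-a_2^2)/a_1^6$, a contradiction. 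This argument needs the coefficients $a_3, b_3$, which is exactly where $n>2$ (i.e.\ $n\ge 4$) enters.
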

\begin{proof} Let $\p$ be as in the assumption and in order to obtain a contradiction suppose that $\p$ is outer regular. Then there exists a branch $\s$ of $\p_e^{-1}$ mapping $\p(\zeta)$ to $\zeta$ and defined on all of $\D$. We transfer the maps to the upper half-plane and work with
$\pt = \tau_{\p(\zeta)}^{-1} \circ \p \circ \tau_\zeta$,
$\pet = \tau_{\p(\zeta)}^{-1} \circ \p_e \circ \tau_\zeta$ and
$\st = \tau_{\zeta}^{-1} \circ \s \circ \tau_{\p(\zeta)}$, noting that all three maps fix $0$ and $\pt, \st \in \Pz$.

Let $a_0, a_1, a_2, a_3$ denote the initial Taylor coefficients of $\pt$ at $z=0$, and $b_0,b_1,b_2,b_3$ denote those of $\st$.
In \cref{order of contact of sigma}, we show that $\s$ has order of contact $n$ at $\p(\zeta)$, and so by \cref{CFP condition} the Hankel matrices $H_m(a_1,...,a_{n-1})$ and $H_m(b_1,...,b_{n-1})$ are both positive, and in particular, their $2$nd leading principal minors, $a_1 a_3 -a_2^2$ and $b_1 b_3 -b_2^2$, are positive.

In \cref{derivatives of p and pe} we show that $\pt$ and $\pet$ have equal Taylor coefficients ${a_0,...,a_{n-1}}$. Noting that $\pet \circ \st = id$, we can therefore express $b_1,b_2,b_3$ in terms of $a_1,a_2,a_3$ using \fdbf. We reach a contradiction by calculating that $b_1b_3-b_2^2 = \frac{-(a_1 a_3 - a_2^2 )}{a_1^6} < 0$.
\end{proof}

\subsection{Generalized Adjoint Formula in the Calkin Algebra} \label{Adjoint Formula for Basic Case}
$ $

In general, the set $V$ on which $\s_1,...,\s_d$ are analytic can not be chosen to contain all of $\D$, and so the formal operators $C_{\s_1}, ..., C_{\s_d}$ in \cref{wishful thinking formula} are not legitimate operators. We overcome this difficulty by pre-composing with a map $\psi$ with image contained in $V \cap \D$ to obtain analytic self-maps of $\D$, $\s_1 \circ \psi,..., \s_d \circ \psi$. This enables us to write a legitimate operator formula for $C_{\psi}C_{\p}^*$.

\begin{prop}\label{adjoint operator formula} Let $\psi$ be an analytic self-map of $\D$ satisfying $\ol{\psi(\D)} \subset V$. Then
\[ C_{\psi}C_\p^* = C_{\psi}\Lambda + \sum_{j=1}^{d} M_{h_j \circ \psi}C_{\s_j \circ \psi}S^* \]
where $h_j(z) = z \s'_j(z)$, $S^*$ is the adjoint of the shift operator and $\Lambda$ is the rank one operator defined by $ \Lambda(f) := \frac{f(0)}{1-\ol{\p(0)}z}$.

In particular, $h_j \circ \psi$ are $H^{\infty}$ functions and $\s_j \circ \psi$ are analytic self-maps of $\D$.
\end{prop}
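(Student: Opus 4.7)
The plan is to substitute $z \mapsto \psi(z)$, for $z \in \D$, into the pointwise identity of \cref{pointwise adjoint formula} and then recognize each term as the value at $z$ of an honest bounded operator on $H^2$ applied to $f$. Since $\psi$ is an analytic self-map of $\D$ with $\overline{\psi(\D)} \subset V$, every $\psi(z)$ lies in $V \cap \D$, so \cref{pointwise adjoint formula - the formula} yields
\[ (C_\p^* f)(\psi(z)) = \frac{f(0)}{1-\overline{\p(0)}\psi(z)} + \sum_{j=1}^d \psi(z)\sigma_j'(\psi(z))\, S^*\!f(\sigma_j(\psi(z))) \]
for every $z \in \D$, and the left-hand side is by definition $(C_\psi C_\p^* f)(z)$.

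Next I would verify that each term on the right is the evaluation at $z$ of a bounded operator applied to $f$; this is the heart of the matter and the entire reason for introducing $\psi$. The branches $\sigma_j$ are in general defined only on $V$ (a possibly proper subset of $\D$), and the coefficients $h_j(w) = w\sigma_j'(w)$ may blow up as $w$ approaches $\partial V$, so the formal expression \cref{wishful thinking formula} need not define actual operators on $H^2$. However, the hypothesis that $\overline{\psi(\D)}$ is a compact subset of $V$ confines the relevant arguments of $\sigma_j$ and $h_j$ to a set on which both are analytic and bounded. Consequently $\sigma_j \circ \psi$ is an analytic self-map of $\D$ (it lands in $\D$ because $\sigma_j(V) \subset \D$), so $C_{\sigma_j \circ \psi}$ is a bounded composition operator on $H^2$; likewise $h_j \circ \psi \in H^\infty$ (with the bound depending on $\sup_{\overline{\psi(\D)}}|\sigma_j'|$), so $M_{h_j \circ \psi}$ is a bounded multiplication operator. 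The operator $C_\psi \Lambda$ is bounded as a product involving the rank-one $\Lambda$.

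Finally, I would match terms. By the definition of $\Lambda$ the first summand equals $(C_\psi \Lambda f)(z)$, and the $j$-th summand factors as
\[ (h_j \circ \psi)(z) \cdot (S^*\!f)(\sigma_j(\psi(z))) = (M_{h_j \circ \psi}\, C_{\sigma_j \circ \psi}\, S^* f)(z). \]
Since the pointwise identity then holds for every $z \in \D$ and every $f \in H^2$, the operator equation follows. The main (and essentially only) obstacle is the well-definedness verification in the previous paragraph; the rest is bookkeeping. The ``in particular'' clause of the proposition, asserting that $h_j \circ \psi \in H^\infty$ and $\sigma_j \circ \psi$ is an analytic self-map of $\D$, is precisely what was established along the way.
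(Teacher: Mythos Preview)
Your proposal is correct and follows essentially the same route as the paper's proof: substitute $\psi(z)$ into the pointwise formula of \cref{pointwise adjoint formula}, verify that $\sigma_j\circ\psi$ are analytic self-maps of $\D$ and $h_j\circ\psi\in H^\infty$ using the compactness of $\overline{\psi(\D)}\subset V$, and then identify each term as the value of a bounded operator. One small imprecision: the reason $\sigma_j\circ\psi$ lands in $\D$ is not that $\sigma_j(V)\subset\D$ (which need not hold if $V$ meets $\D_e$), but that $\psi(\D)\subset V\cap\D$ and $\varphi_e^{-1}(\D)\subset\D$, exactly as the paper recalls.
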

\begin{proof} Since $\psi$ maps $\D$ into $V$ and $\s_1,...,\s_d$ are analytic on $V$, we get that $\s_1 \circ \psi ,...,\s_d \circ \psi$ are analytic self-maps of $\D$ (recall that $\p_e^{-1}(\D) \subset \D$). The functions $\s'_1,...,\s'_d$ are analytic on $V$ and so bounded on $\psi(\D)$, so $h_1 \circ \psi, ..., h_d \circ \psi$ are $H^{\infty}$ functions. To complete the proof, note that by the pointwise formula given in \cref{pointwise adjoint formula}, for all $f \in H^2$ and all $z \in \D$ we have
\begin{align*}
C_{\psi}C_\p^*f(z) = C_\p^*f(\psi(z)) &= \frac{f(0)}{1-\ol{\p(0)}\psi(z)} +\sum_{j=1}^{d} \psi(z) \s'_j(\psi(z))S^*f(\s_j(\psi(z))) \\
&= (C_{\psi}\Lambda f)(z) + \sum_{j=1}^{d} (M_{h_j \circ \psi}C_{\s_j \circ \psi}S^*f)(z).
\end{align*}
\end{proof}

We turn to the case where $\p$ is a basic function of degree $d$ with order of contact $n$ at $\zeta$. We denote $\lambda = \p(\zeta)$ and note that $\rho(\lambda) = \lambda$ is a regular value for $\p_e$, and so there exists a neighborhood $V(\p)$ of $\lambda$ consisting of regular values of $\p_e$. Recall that $\s_1(V(\p)),...,\s_d(V(\p))$ are pairwise disjoint, and let $\s$ denote the unique branch of $\p_e^{-1}$ that maps $\lambda$ to $\zeta$. The following is a generalization of \cite[Corollary~15]{BS}.

\begin{lem}\label{adjoint formula mod compacts 0.5} Let $\p$ be a basic function with contact with $\T$ at $\zeta$, and let $\psi$ be an analytic self map of $\D$ satisfying $\ol{\psi(\D)} \subset V(\p)$. Then 
\[C_\psi C_\p^* \equiv M_{h \circ \psi} C_{\s \circ \psi} S^* \modK \]
where $h(z) := z \s'(z)$.

In particular, $h \circ \psi$ is an $H^{\infty}(\D)$ function and $\s \circ \psi$ is an analytic self-map of $\D$.
\end{lem}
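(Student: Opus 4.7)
The plan is to apply \cref{adjoint operator formula} to expand $C_\psi C_\p^*$ over all $d$ branches $\s_1, \ldots, \s_d$ of $\p_e^{-1}$ on $V(\p)$, and then to argue that every summand except the one coming from the distinguished branch $\s$ is compact. Taking $\s_1 = \s$ and writing $h_j(z) = z \s_j'(z)$, the proposition yields
\[ C_\psi C_\p^* = C_\psi \Lambda + M_{h \circ \psi} C_{\s \circ \psi} S^* + \sum_{j=2}^{d} M_{h_j \circ \psi} C_{\s_j \circ \psi} S^*. \]
The rank-one operator $C_\psi \Lambda$ is automatically compact, so the task reduces to showing that $M_{h_j \circ \psi} C_{\s_j \circ \psi} S^* \in \K$ for every $j \geq 2$.

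The central step is to pin down where the non-distinguished preimages of $\lambda$ under $\p_e$ actually sit, and this is where the basic hypothesis is essential. Since $\p$ is basic, $\p$ sends $\D$ into $\D$ and $\T \setminus \{\zeta\}$ into $\D$, so $\zeta$ is the only preimage of $\lambda$ under $\p$ lying in $\ClD$. Because $\lambda$ is a regular value, $\p^{-1}(\lambda)$ has exactly $d$ distinct points, so the remaining $d-1$ of them must lie in $\D_e$. Applying the inversion $\rho$ then gives $\p_e^{-1}(\lambda) = \{\zeta, z_2, \ldots, z_d\}$ with $z_j \in \D$ for each $j \geq 2$, and after relabeling $\s_j(\lambda) = z_j$.

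With the preimages located, I will take $V(\p)$ to be a small enough neighborhood of $\lambda$ that $\ol{\s_j(V(\p))}$ is a compact subset of $\D$ for every $j \geq 2$; this is possible because $\s_j$ is continuous at $\lambda$ with $\s_j(\lambda) = z_j \in \D$. Since $\ol{\psi(\D)} \subset V(\p)$, the self-map $\s_j \circ \psi$ of $\D$ has image in this fixed compact subset of $\D$, so $\|\s_j \circ \psi\|_\infty < 1$. The composition operator $C_{\s_j \circ \psi}$ is therefore compact on $H^2$ by the classical Sarason/Shapiro-Sundberg/Cima-Matheson criterion recalled in \cref{the class A}, since $E(\s_j \circ \psi)$ is empty. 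Because $\K$ is a two-sided ideal, $M_{h_j \circ \psi} C_{\s_j \circ \psi} S^*$ is compact as well, and combining this with the compactness of $C_\psi \Lambda$ gives the claimed congruence modulo $\K$.

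I expect the second paragraph to be the main obstacle: translating the definition of a basic function into the concrete geometric statement that every preimage of $\lambda$ under $\p_e$ other than $\zeta$ lies inside $\D$. Once that fact is in hand, the compactness of every non-distinguished term is a routine combination of the continuity of $\s_j$ at $\lambda$, the standard compactness criterion for composition operators, and the ideal property of $\K$.
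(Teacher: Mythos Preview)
Your proposal is correct and follows the same strategy as the paper: apply \cref{adjoint operator formula}, discard the rank-one term $C_\psi\Lambda$, and show that each non-distinguished branch $\s_j$ ($j\ge 2$) yields a compact summand because $\|\s_j\circ\psi\|_\infty<1$. The only cosmetic difference is that you obtain this last inequality by shrinking $V(\p)$ so that $\s_j(V(\p))\subset\D$ (legitimate, since $V(\p)$ is introduced existentially), whereas the paper argues directly for the given $V(\p)$ that the pairwise disjoint compact sets $\s_j(\ol{\psi(\D)})\subset\ClD$ miss $\T$ because $\p_e$ sends only $\zeta\in\T$ into $\T$.
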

\begin{proof} 

Notice that since $\rho$ is the identity on $\T$, $\p_e$ maps exactly one point in $\T$ (the point $\zeta$) into $\T$ and so $\p_e^{-1}(\T) = \{ \zeta \}$. We also have that the branches $\s_1,...,\s_d$ map $\ol{\psi(\D)}$ to pairwise disjoint closed subsets of $\ClD$. Thus, for $\s_j \neq \s$ the closed set $\s_j(\ol{\psi(\D)})$ does not intersect $\T$ and so $\| \s_j \circ \psi \|_\infty < 1$ and we get that the composition operator $C_{\s_j \circ \psi}$ is compact.

Noting that $\Lambda \colon f \to \frac{f(0)}{1-\ol{\p(0)}}z$ is rank one, reducing \cref{adjoint operator formula} modulo the compacts gives
\[ C_{\psi}C_\p^* \equiv M_{h \circ \psi}C_{\s \circ \psi}S^* \modK \]
where $h$ is defined by $h(z) = z \s'(z)$, and by the same proposition $h$ satisfies $h \circ \psi$ is an $H^{\infty}(\D)$ function and $\s \circ \psi$ is an analytic self-map of $\D$.
\end{proof}

For a basic function $\p$, there exists a neighborhood $W(\p)$ of $\lambda$ such that $\ol{W(\p)} \subset V(\p)$, and on which $\s$ is bounded away from zero (recall that $\s(\lambda) = \zeta$).
Restricting to this neighborhood enables the removal of $S^*$ from the formula. In essence, working modulo $\K$, we transfer a term originating from a summand in the variant formula (\cref{pointwise adjoint formula}), to a term resembling a summand in the original formula \cite[Theorem~7]{MR2394110}.

\begin{lem}\label{adjoint formula mod compacts 1} Let $\p$ be a basic function with contact with $\T$ at $\zeta$, and $\psi$ be an analytic self map of $\D$ satisfying $\psi(\D) \subset W(\p)$. Then 
\[C_\psi C_\p^* \equiv M_{g \circ \psi} C_{\s \circ \psi} \modK \]
where $g(z) := \frac{z \s'(z)}{\s(z)}$.

In particular, $g \circ \psi$ is an $H^{\infty}(\D)$ function and $\s \circ \psi$ is an analytic self-map of $\D$.
\end{lem}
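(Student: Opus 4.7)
The plan is to start from \cref{adjoint formula mod compacts 0.5}, which gives $C_\psi C_\p^* \equiv M_{h \circ \psi} C_{\s \circ \psi} S^*$ (mod $\K$) with $h(z) = z\s'(z)$, and remove the $S^*$ factor at the cost of a rank-one error. The key observation is that $S^*$ acts by $S^* f(z) = (f(z)-f(0))/z$ on $H^2$, so we have the identity
\[
M_{h \circ \psi} C_{\s \circ \psi} S^* f(z) = \frac{h(\psi(z))}{\s(\psi(z))}\bigl(f(\s(\psi(z))) - f(0)\bigr) = (M_{g \circ \psi} C_{\s \circ \psi} f)(z) - f(0)\,g(\psi(z)),
\]
where $g(z) = z\s'(z)/\s(z)$. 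The second term on the right is the rank-one operator $R\colon f \mapsto f(0)\,(g\circ\psi)$, which is automatically compact.

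First I would verify that $g$ and $g\circ\psi$ make sense as bounded functions under the standing assumption $\psi(\D) \subset W(\p)$. By the definition of $W(\p)$ immediately preceding the lemma, $\ol{W(\p)} \subset V(\p)$ and $\s$ is bounded away from zero on $W(\p)$. Since $\s$ is analytic on $V(\p)$, both $\s'$ and $1/\s$ are bounded on the compact set $\ol{W(\p)}$, hence $g$ is analytic and bounded on $W(\p)$. Consequently $g \circ \psi \in H^\infty(\D)$, and $\s \circ \psi$ is an analytic self-map of $\D$ (as already noted in \cref{adjoint formula mod compacts 0.5}), so both operators $M_{g\circ\psi}$ and $C_{\s\circ\psi}$ are legitimate bounded operators on $H^2$.

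With these pieces in hand, the proof reduces to a short calculation: apply \cref{adjoint formula mod compacts 0.5}, substitute the pointwise identity above, and observe that the leftover term is the rank-one operator $R$, hence compact. Combining congruences gives
\[
C_\psi C_\p^* \equiv M_{h \circ \psi} C_{\s \circ \psi} S^* = M_{g \circ \psi} C_{\s \circ \psi} - R \equiv M_{g \circ \psi} C_{\s \circ \psi} \modK.
\]

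There is no real obstacle here; the only point requiring care is confirming that $g$ is bounded on $\psi(\D)$, which is precisely why the lemma shrinks the domain from $V(\p)$ (used in \cref{adjoint formula mod compacts 0.5}) to the smaller neighborhood $W(\p)$ on which $\s$ stays away from $0$. In effect, this step converts a term of the ``variant'' Bourdon--Shapiro form $M_{h\circ\psi}C_{\s\circ\psi}S^*$ into one matching the original Hammond--Moorhouse--Robbins form $M_{g\circ\psi}C_{\s\circ\psi}$ modulo compacts, which will be the useful shape for subsequent applications.
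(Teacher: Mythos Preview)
Your proposal is correct and follows essentially the same argument as the paper: invoke \cref{adjoint formula mod compacts 0.5}, expand $S^*$ via $(S^*f)(z) = (f(z)-f(0))/z$, and identify the difference between $M_{h\circ\psi}C_{\s\circ\psi}S^*$ and $M_{g\circ\psi}C_{\s\circ\psi}$ as the rank-one operator $f \mapsto f(0)\,(g\circ\psi)$. The only cosmetic difference is that the paper verifies $g\circ\psi \in H^\infty$ directly from $\s\circ\psi$ being bounded away from zero on $\D$ (together with $h\circ\psi \in H^\infty$ from the previous lemma), rather than appealing to compactness of $\ol{W(\p)}$.
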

\begin{proof} By \cref{adjoint formula mod compacts 0.5}, there exists a $K_1 \in \K$ such that
\[ C_{\psi}C_\p^* = M_{h \circ \psi}C_{\s \circ \psi}S^* + K_1, \]
where $h$ is defined by $h(z) = z \s'(z)$ and satisfies $h \circ \psi \in H^{\infty}(\D)$, and $\s \circ \psi$ is an analytic self-map of $\D$. Note that since $\psi$ maps $\D$ into $W(\p)$, the map $\s \circ \psi$ is bounded away from zero on $\D$, and so $g \circ \psi$ is an $H^{\infty}(\D)$ function. Recall that for any $f \in H^2$ and any nonzero $z \in \D$, we have that $(S^* f)(z) = \dfrac{f(z) - f(0)}{z}$, and so for any $f \in H^2$ and $z \in \D$ we have 
\begin{align*}
(C_{\psi}C_\p^*f)(z) &=(M_{h \circ \psi}C_{\s \circ \psi}S^*f)(z) + (K_1f)(z) \\
&= \psi(z)\s'(\psi(z)) \frac{f(\s(\psi(z)))-f(0)}{\s(\psi(z))} + (K_1f)(z) \\
&= g(\psi(z))f(\s(\psi(z))) - g(\psi(z))f(0) + (K_1f)(z).
\end{align*}
Thus $C_{\psi}C_\p^* = M_{g \circ \psi} C_{\s \circ \psi} - K_2 + K_1$, where $K_2$ is the rank one operator defined by $K_2 \colon f \mapsto f(0) \cdot g \circ \psi$, and the proof is complete.
\end{proof}

For a linear fractional map $\p(z) = \dfrac{az+b}{cz+d}$, we have that $\p_e(z)$ is invertible, and $\p_e^{-1}(z) = \s(z)$ is the Krein adjoint of $\p$. In \cite{LFADJ}, Kriete-MacCluer-Moorhouse developed the adjoint formula modulo $\K$ for this case, which states
\[ C_\p^* \equiv \frac{1}{|\p'(\zeta)|} C_\s \modK. \]
This can easily be extended to $\tau \in \A$ with $F(\tau) = \{ \zeta \}$, to produce a formula for $C_\tau^*$ provided $\tau$ has order of contact $2$ with $\T$ at $\zeta$. We can now generalize this adjoint formula to higher orders of contact.

For what follows, recall that $f(\xi)$ denotes the non-tangential limit of $f$ at $\xi$ for a function $f$ of $\D$ and $\xi \in \T$.

\begin{prop}\label{super nice adjoint formula mod compacts} Let $\p$ be a basic function with contact with $\T$ at $\zeta$ and let $\psi$ be a self-map of $\D$ satisfying for some $\eta \in \T$:
\begin{enumerate}
\item \label{one}$\psi(\D) \subset W(\p)$;
\item \label{two}$\psi$ is analytic at $\eta$ and $\psi(\eta) = \lambda$;
\item \label{three} $\psi^{-1}( \{ \lambda \} ) :=  \{ \beta \in \T \colon \psi(\beta) \text{ exists and is equal to } \lambda \} = \{ \eta \}$.
\end{enumerate}
Then we have that the map $\s \circ \psi$ is in the class $\A$ with $F(\s \circ \psi) = \{ \eta \}$ and that
\[C_{\psi} C_\p^* \equiv \frac{1}{|\p'(\zeta)|}C_{\s \circ \psi} \modK. \]
\end{prop}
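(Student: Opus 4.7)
The plan is to start from \cref{adjoint formula mod compacts 1}, which already provides the operator identity $C_\psi C_\p^* \equiv M_{g \circ \psi} C_{\s \circ \psi} \modK$ with $g(z) = z \s'(z)/\s(z)$. From here the task is twofold: first, verify that $\s \circ \psi$ belongs to the class $\A$ with $F(\s \circ \psi) = \{\eta\}$, so that \cref{M_w C_p formula} can be applied to absorb the multiplier $g \circ \psi$ into a scalar; second, compute that scalar $g(\lambda)$ and identify it with $1/|\p'(\zeta)|$.

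For the first step I would check the defining conditions of $\A$ for $\s \circ \psi$. Both the ``modulus less than $1$ a.e.'' requirement and the finiteness of $E(\s \circ \psi)$ reduce to showing that $(\s \circ \psi)^{-1}(\T) \cap \T \subset \{\eta\}$. The boundary values of $\psi$ lie in $\ol{W(\p)}$ a.e.\ since $\psi(\D) \subset W(\p)$; because $\p_e^{-1}(\T) \cap \T = \{\zeta\}$, the local branch $\s$ sends only $\lambda$ of $\T \cap \ol{W(\p)}$ back into $\T$; and hypothesis (iii) ensures that $\psi$ takes the boundary value $\lambda$ only at $\eta$. Thus the singular Clark measures of $\s \circ \psi$ are all supported in $\{\eta\}$, so $E(\s \circ \psi) \subset \{\eta\}$. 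To see that $\eta$ is actually in $F(\s \circ \psi)$, note that $\s \circ \psi$ is analytic at $\eta$ with value $\zeta \in \T$, and transferring to the upper half-plane and applying \cref{contact for composite} gives an even order of contact at $\eta$ together with the required Taylor expansion. Since $g$ is analytic on $W(\p)$ (where $\s$ is analytic and nonzero by construction of $W(\p)$) and $\psi$ is continuous at $\eta$ with $\psi(\eta) = \lambda$, the weight $g \circ \psi$ is continuous at $\eta$, and \cref{M_w C_p formula} yields
\[ C_\psi C_\p^* \equiv M_{g \circ \psi} C_{\s \circ \psi} \equiv g(\lambda) C_{\s \circ \psi} \modK. \]

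For the second step, the value $g(\lambda) = \lambda \s'(\lambda)/\zeta$ is most transparently computed in the upper half-plane. Setting $\st = \tau_\zeta \circ \s \circ \tau_\lambda^{-1}$ and $\pet = \tau_\lambda \circ \p_e \circ \tau_\zeta^{-1}$ (with the analogous $\pt$ for $\p$), a direct chain-rule computation using $\tau_\zeta'(\zeta) = -i/(2\zeta)$ and $(\tau_\lambda^{-1})'(0) = 2i\lambda$ gives $\st'(0) = \lambda \s'(\lambda)/\zeta = g(\lambda)$. Since $\s = \p_e^{-1}$ locally near $\lambda$, $\st = \pet^{-1}$ near $0$, and hence $\st'(0) = 1/\pet'(0)$. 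The lemma identifying the first $n-1$ Taylor coefficients of $\pt$ and $\pet$ (referenced earlier in the excerpt) gives $\pet'(0) = \pt'(0)$, while the angular-derivative identity $\p'(\zeta) = \ol\zeta \lambda |\p'(\zeta)|$ combined with the chain rule gives $\pt'(0) = |\p'(\zeta)|$. Hence $g(\lambda) = 1/|\p'(\zeta)|$, and the proposition follows.

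The main obstacle I anticipate is the bookkeeping in the first step: hypothesis (iii) is exactly what prevents boundary values of $\psi$ from hitting $\lambda$ off of $\eta$, which would otherwise enlarge $F(\s \circ \psi)$ and obstruct the application of \cref{M_w C_p formula}. The half-plane identification of $g(\lambda)$ with $1/|\p'(\zeta)|$ is then a routine chain-rule exercise once the conjugations are aligned correctly.
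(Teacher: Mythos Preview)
Your proposal is correct and follows essentially the same route as the paper: start from \cref{adjoint formula mod compacts 1}, verify $\s\circ\psi\in\A$ with $F(\s\circ\psi)=\{\eta\}$ via the inclusion $(\s\circ\psi)^{-1}(\T)\subset\{\eta\}$ (using $\p_e^{-1}(\T)=\{\zeta\}$, univalence of $\s$, and hypothesis (iii)), then apply \cref{M_w C_p formula} and evaluate $g(\lambda)$. The only cosmetic difference is that the paper computes $g(\lambda)=\lambda\s'(\lambda)/\zeta$ directly from $\s'(\lambda)=1/\p_e'(\zeta)=1/\p'(\zeta)$ (via \cref{derivatives of p and pe}) and then invokes the Julia--Carath\'eodory identity, whereas you pass through the half-plane conjugates $\st,\pet,\pt$; both arrive at $1/|\p'(\zeta)|$ by the same ingredients. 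One small point: your appeal to \cref{contact for composite} for the order-of-contact check at $\eta$ is fine, but the paper's phrasing is slightly cleaner---since $\s\circ\psi$ is analytic at $\eta$ and (by what you already showed) does not send any arc of $\T$ into $\T$, finite even order of contact and the Taylor expansion are immediate.
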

\begin{proof}
By \cref{adjoint formula mod compacts 1} we have that
\begin{align} \label{from adjoint formula}
C_\psi C_\p^* \equiv M_{g \circ \psi}C_{\s \circ \psi} \modK,
\end{align}
where $g(z) = \frac{z \s'(z)}{\s(z)}$, $g \circ \psi$ is an $H^{\infty}(\D)$ function and $\s \circ \psi$ is an analytic self-map of $\D$. Note that it follows from \ref{one} and \ref{two} and the definition of $W(\p)$ that the maps $\s \circ \psi$ and $g \circ \psi$ are both analytic at $\eta$. In order to apply \cref{M_w C_p formula} to $M_{g \circ \psi}C_{\s \circ \psi}$, it remains to show that $\s \circ \psi \in \A$.

Recall that $E(\s \circ \psi) = \ol{ \bigcup_{|\alpha|=1} spt(\mu^s_\alpha) }$, 
where $\mu^s_\alpha$ is the singular part of the Clark measure for $\s \circ \psi$ which is carried by
\[ (\s \circ \psi)^{-1}(\{\alpha\}) = \{ \beta \in \T \colon (\s \circ \psi)(\beta) \text{ exists and is equal to } \alpha \}. \]
Suppose $(\s \circ \psi)(\beta) = \alpha$ for some $\alpha, \beta \in \T$. Then applying $\p_e$ we get that $\psi(\beta)$ exists and $\s(\psi(\beta)) = \alpha$. Recall that $\p_e^{-1}(\T) = \{ \zeta \}$, $\s(\lambda) = \zeta$ and $\s$ is univalent on $V(\p)$. Thus $\s(\psi(\beta)) = \alpha$ implies that $\alpha = \zeta$ and $\psi(\beta) = \lambda$, and so by \ref{three} we have $\beta = \eta$. Therefore $spt(\mu^s_\alpha)$ is empty for $\alpha \neq \zeta$, and $spt(\mu^s_\zeta) \subset \{ \eta \}$. We conclude that $E(\s \circ \psi) \subset \{ \eta \}$ is finite and so $E(\s \circ \psi) = F(\s \circ \psi) = \{ \eta \}$. Since $\s \circ \psi$ is analytic at $\eta$ and does not map an arc of $\T$ containing $\eta$ into $\T$, $\s \circ \psi$ has finite order of contact at $\eta$ and a Taylor expansion to that order about $\eta$.
Thus $\s \circ \psi$ is in the class $\A$.

Now applying \cref{M_w C_p formula} to $M_{g \circ \psi}C_{\s \circ \psi}$, \cref{from adjoint formula} becomes
\[C_\psi C_\p^* \equiv (g \circ \psi)(\lambda)C_{\s \circ \psi} \modK.\]
In order to complete the proof, we calculate $(g \circ \psi)(\eta) = g(\lambda) = \frac{\lambda}{\zeta \p'(\zeta)}$ and note that $|\p'(\zeta)| = \zeta \ol{\lambda} \p'(\zeta)$ by the Julia Carath\' eodory Theorem \cite{MR1397026}.
\end{proof}

\subsection{Relationship Between $\p$ and $\s$} \label{section almost inverse} 
$ $

Let $\p$ be a rational function with order of contact $n$ with the unit circle at $\zeta$, mapping $\zeta$ to $\lambda$, and suppose that $\s$ is a branch of $\p_e^{-1}$ defined on some neighborhood of $\lambda$ and mapping $\lambda$ to $\zeta$. Then $\s \circ \p_e = id$ near $\zeta$ and ${\p_e \circ \s = id}$ near $\lambda$, so that
$D_n(\s \circ \p_e, \zeta) = (\zeta, 1, 0,...,0)$ and $D_n(\p_e \circ \s, \lambda) = (\lambda, 1, 0,...,0)$.
Although $\p$ and $\s$ are not inverse functions, we show that they are ``almost inverse'' in the sense that $D_{n-1}(\s \circ \p, \zeta) = (\zeta, 1, 0,...,0)$, and $D_{n-1}(\p \circ \s, \lambda) = (\lambda, 1, 0,...,0)$.
However $D_{n}(\s \circ \p, \zeta)$ and $D_{n}(\p \circ \s, \lambda)$ are not generally equal, and the precise way that they can differ is one key to our main result.

Throughout this section, we use \fdbf. We transfer $\D$ to $\U$ using the family of conformal maps $\tau_\alpha \colon \D \to \U$ for $\alpha \in \T$, defined by
$ \tau_\alpha \colon z \mapsto i\frac{\alpha-z}{\alpha+z}$, and analyze the relationships of
$\pt = \tau_{\lambda} \circ \p \circ \tau_{\zeta}^{-1}$,
$\pet = \tau_{\lambda} \circ \p_e \circ \tau_{\zeta}^{-1}$ and
$\st = \tau_{\zeta} \circ \s \circ \tau_{\lambda}^{-1}$.
Note that $\pt, \st \in \Pz$ and that $\pet$ is the upper half-plane exterior map associated with $\pt$, that is, $\pet(z) = \ol{ \pt (\ol{z})}$.

\begin{lem}\label{derivatives of p and pe} $D_{n-1}(\p, \zeta) = D_{n-1}(\p_e, \zeta)$ and $\p^{(n)}(\zeta) \neq \p_e^{(n)}(\zeta)$.
\end{lem}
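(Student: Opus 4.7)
The plan is to work in the upper half-plane via the conformal maps $\tau_\zeta$ and $\tau_\lambda$, where the relationship between $\p$ and $\p_e$ becomes the Schwarz reflection $z \mapsto \overline{(\cdot)(\bar z)}$. A direct computation using $\alpha\bar{\alpha}=1$ for $\alpha \in \T$ shows
\[ \tau_\alpha(\rho(w)) = \tau_\alpha(1/\bar{w}) = \overline{\tau_\alpha(w)}, \]
so $\tau_\alpha \circ \rho = \overline{(\cdot)} \circ \tau_\alpha$. Applying this at both $\zeta$ and $\lambda$ yields
\[ \pet(z) = \tau_\lambda(\rho(\p(\rho(\tau_\zeta^{-1}(z))))) = \overline{\pt(\bar z)}. \]

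Since $\p$ is rational and analytic at $\zeta$ (as $\p(\zeta) = \lambda \in \T$, not $\infty$), the map $\pt$ is rational and analytic at $0$, so admits a genuine Taylor expansion $\pt(z) = \sum_{k \geq 0} a_k z^k$. The assumption that $\p$ has order of contact $n$ with $\T$ at $\zeta$ translates directly to $\pt$ having order of contact $n$ with $\R$ at $0$, so by the discussion in \cref{order of contact subsection} one has $n$ even, $a_0, \dots, a_{n-1} \in \R$, and $\im a_n > 0$. The identity $\pet(z) = \overline{\pt(\bar z)}$ then gives the Taylor expansion $\pet(z) = \sum_{k \geq 0} \overline{a_k} z^k$, and since $\overline{a_k} = a_k$ for $k \leq n-1$ while $\overline{a_n} \neq a_n$, I would conclude
\[ D_{n-1}(\pt, 0) = D_{n-1}(\pet, 0), \qquad \pt^{(n)}(0) \neq \pet^{(n)}(0). \]

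To transfer this back to $\p$ and $\p_e$, I would use that $\p = \tau_\lambda^{-1} \circ \pt \circ \tau_\zeta$ and $\p_e = \tau_\lambda^{-1} \circ \pet \circ \tau_\zeta$ share the same outer maps, and apply \fdbf ~twice. For derivatives of order $k \leq n-1$, the data $D_k(\p, \zeta)$ and $D_k(\p_e, \zeta)$ depend only on $D_k(\tau_\zeta, \zeta)$, $D_k(\pt, 0)$ (resp.\ $D_k(\pet, 0)$), and $D_k(\tau_\lambda^{-1}, 0)$, yielding $D_{n-1}(\p, \zeta) = D_{n-1}(\p_e, \zeta)$. For the $n$-th derivative I would invoke the useful form \cref{useful Faa di Bruno} applied successively to $\pt \circ \tau_\zeta$ and then to $\tau_\lambda^{-1} \circ (\pt \circ \tau_\zeta)$: all contributions involving only order-$(n-1)$ data cancel when $\p_e^{(n)}(\zeta)$ is subtracted, and what remains is
\[ \p^{(n)}(\zeta) - \p_e^{(n)}(\zeta) = (\tau_\lambda^{-1})'(0)\,\tau_\zeta'(\zeta)^n\,\bigl(\pt^{(n)}(0) - \pet^{(n)}(0)\bigr), \]
which is nonzero since each factor is.

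The main obstacle is just the careful double application of Faà di Bruno in the last step; the key algebraic input---that conjugation-reflection of a function with real Taylor coefficients through order $n-1$ agrees with it through order $n-1$ but differs at order $n$---is essentially immediate once the half-plane identity $\pet(z) = \overline{\pt(\bar z)}$ is in hand.
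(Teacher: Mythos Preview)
Your proposal is correct and follows essentially the same route as the paper. The paper's proof is terser: it simply cites the general consequence of Fa\`a di Bruno's formula (that $D_k$ of a composite is determined by the $D_k$ data of the factors, hence the passage $\p \leftrightarrow \pt$ via the fixed conformal maps $\tau_\zeta, \tau_\lambda^{-1}$ preserves both equalities and inequalities of derivative data) to reduce immediately to the half-plane statement, and it takes the identity $\pet(z)=\overline{\pt(\bar z)}$ as already established in the section setup rather than rederiving it. Your explicit computation of $\p^{(n)}(\zeta)-\p_e^{(n)}(\zeta)$ as a nonzero multiple of $\pt^{(n)}(0)-\pet^{(n)}(0)$ is a valid alternative to the paper's invertibility argument, and in fact anticipates the kind of calculation the paper performs later in \cref{s circ p}.
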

\begin{proof} By the discussion in \cref{Fa di Bruno section}, it suffices to show that 
\[ D_{n-1}(\pt, 0) = D_{n-1}(\pet, 0) \quad \text{and} \quad \pt^{(n)}(0) \neq \pet^{(n)}(0). \]
Let $\pt(z) = \sum_0^{\infty} a_k z^k$ be the Taylor expansion of $\pt$ about $0$. Then $\pet(z) = \ol{\pt(\ol{z})} = \sum_0^{\infty}\ol{a_k}z^k $, and so we have
\[ \pt^{(k)}(0) = k! a_k \quad \text{and} \quad \pet^{(k)}(0) = k! \ol{a_k} \]
for all $k$. To complete the proof, recall that $\p$ has order of contact $n$ with the unit circle, and so $a_0, a_1,..., a_{n-1} \in \R$ and $\im a_n >0$.
\end{proof}

\begin{prop} \label{s circ p} The $n^{th}$ order data for $\s \circ \p$ and $\p \circ \s$ is given by
\begin{align*}
D_n(\s \circ \p, \zeta) &= \left( \zeta, 1, 0,...,0, \frac{c}{\p'(\zeta)} \right) \\
D_n(\p \circ \s, \lambda) &= \left( \lambda, 1, 0,...,0, \frac{c}{\p'(\zeta)^n} \right),
\end{align*}
where $c$ is the non zero constant given by $c = \p^{(n)}(\zeta) - \p_e^{(n)}(\zeta).$
\end{prop}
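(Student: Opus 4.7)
The plan is to compare $\s \circ \p$ against $\s \circ \p_e$ at $\zeta$, and $\p \circ \s$ against $\p_e \circ \s$ at $\lambda$. Since $\s$ is the branch of $\p_e^{-1}$ sending $\lambda$ to $\zeta$, both $\s \circ \p_e$ and $\p_e \circ \s$ coincide with the identity on suitable neighborhoods, so their $n$-th order data at $\zeta$ and $\lambda$ respectively are exactly $(\zeta,1,0,\ldots,0)$ and $(\lambda,1,0,\ldots,0)$. The claim will follow by isolating the discrepancy introduced when one replaces $\p_e$ by $\p$.

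First I would invoke \fdbf in its combinatorial form to observe that $D_k(h \circ g, z)$ depends only on $D_k(g,z)$ and $D_k(h, g(z))$. Combined with \cref{derivatives of p and pe}, which asserts $D_{n-1}(\p, \zeta) = D_{n-1}(\p_e, \zeta)$, this forces
\[ D_{n-1}(\s \circ \p, \zeta) = D_{n-1}(\s \circ \p_e, \zeta) = (\zeta, 1, 0, \ldots, 0), \]
and, swapping the slot, $D_{n-1}(\p \circ \s, \lambda) = D_{n-1}(\p_e \circ \s, \lambda) = (\lambda, 1, 0, \ldots, 0)$. This delivers the first $n$ entries of each advertised vector.

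For the $n$-th derivative of $\s \circ \p$ I would apply the useful form \cref{useful Faa di Bruno} with $f = \s$ and $g \in \{\p, \p_e\}$. The first term $\s^{(n)}(\lambda) g'(\zeta)^n$ and the middle term $F(D_{n-1}(\s, \lambda), D_{n-1}(g, \zeta))$ depend only on data that \cref{derivatives of p and pe} says is identical for $\p$ and $\p_e$; the last term $\s'(\lambda) g^{(n)}(\zeta)$ is the only one that sees the discrepancy $c$. Subtracting and using $(\s \circ \p_e)^{(n)}(\zeta) = 0$ (since $\s \circ \p_e = \mathrm{id}$ and $n \geq 2$) leaves
\[ (\s \circ \p)^{(n)}(\zeta) = \s'(\lambda)\bigl(\p^{(n)}(\zeta) - \p_e^{(n)}(\zeta)\bigr) = c\,\s'(\lambda), \]
and differentiating $\s \circ \p_e = \mathrm{id}$ at $\zeta$ yields $\s'(\lambda) = 1/\p_e'(\zeta) = 1/\p'(\zeta)$, giving the first identity.

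For $\p \circ \s$ at $\lambda$ the roles of inner and outer reverse: the discrepancy in \cref{useful Faa di Bruno} now lives entirely in the first term $f^{(n)}(\s(\lambda)) \s'(\lambda)^n$, since the middle term and the final term $f'(\s(\lambda))\s^{(n)}(\lambda)$ depend only on $D_{n-1}(\p,\zeta)$ and $\p'(\zeta)$, which match those of $\p_e$. Subtracting the vanishing expression for $\p_e \circ \s$ yields $(\p \circ \s)^{(n)}(\lambda) = c\, \s'(\lambda)^n = c/\p'(\zeta)^n$. There is no real obstacle here — the argument is algebraic once \cref{derivatives of p and pe} is in hand; the only care needed is to track whether the mismatch sits in the inner slot (giving a single $1/\p'(\zeta)$) or the outer slot (giving $1/\p'(\zeta)^n$), which accounts for the asymmetry between the two cases.
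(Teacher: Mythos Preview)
Your proposal is correct and follows essentially the same route as the paper: invoke \cref{derivatives of p and pe} to match the $(n-1)$-th order data, then apply \cref{useful Faa di Bruno} to both $\s\circ\p$ and $\s\circ\p_e$ (resp.\ $\p\circ\s$ and $\p_e\circ\s$), subtract, and read off the discrepancy as $\s'(\lambda)c$ (resp.\ $\s'(\lambda)^n c$). Your remark isolating whether the mismatch sits in the inner or outer slot is exactly the mechanism the paper exploits.
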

\begin{proof}
By \cref{derivatives of p and pe} $D_{n-1}(\p, \zeta) = D_{n-1}(\p_e, \zeta)$, and so by \fdbf , we get
\[ D_{n-1}(\s \circ \p, \zeta) = D_{n-1}(\s \circ \p_e, \zeta) = \left( \zeta, 1, 0,...,0 \right), \]
\[ D_{n-1}(\p \circ \s, \lambda) = D_{n-1}(\p_e \circ \s, \lambda) = \left( \zeta, 1, 0,...,0 \right). \]
We use \cref{useful Faa di Bruno} to we write $(\s \circ \p)^{(n)}(\zeta)$ and $(\s \circ \p_e)^{(n)}(\zeta)$ as:
\begin{align*}
(\s \circ \p)^{(n)}(\zeta) &= \s^{(n)}(\lambda)\p'(\zeta)^n + F(D_{n-1}(\s, \lambda), D_{n-1}(\p,\zeta)) + \s'(\lambda)\p^{(n)}(\zeta), \\
(\s \circ \p_e)^{(n)}(\zeta) &= \s^{(n)}(\lambda)\p_e'(\zeta)^n + F(D_{n-1}(\s, \lambda), D_{n-1}(\p_e,\zeta)) + \s'(\lambda)\p_e^{(n)}(\zeta). 
\end{align*}
Now note that $(\s \circ \p_e)^{(n)}(\zeta) =0$ and $D_{n-1}(\p, \zeta) = D_{n-1}(\p_e, \zeta)$, so that subtracting the above equations yields
\[(\s \circ \p)^{(n)}(\zeta) = \s'(\lambda) \cdot (\p^{(n)}(\zeta) - \p_e^{(n)}(\zeta)). \]
To complete the proof of the first statement, note that \cref{derivatives of p and pe} implies that $\s'(\lambda) = 1/ \p_e'(\zeta) = 1/ \p'(\zeta)$ and that $c = \p^{(n)}(\zeta) - \p_e^{(n)}(\zeta)$ is non zero.

Similarly, we apply \cref{useful Faa di Bruno} to $(\p \circ \s)^{(n)}(\lambda)$ and $(\p_e \circ \s)^{(n)}(\lambda)$, to get
\begin{align*}
(\p \circ \s)^{(n)}(\zeta) &= \p^{(n)}(\zeta)\s'(\lambda)^n + F(D_{n-1}(\p, \zeta), D_{n-1}(\s,\lambda)) + \p'(\zeta)\s^{(n)}(\lambda), \\
(\p_e \circ \s)^{(n)}(\zeta) &= \p_e^{(n)}(\zeta)\s'(\lambda)^n + F(D_{n-1}(\p_e, \zeta), D_{n-1}(\s,\lambda)) + \p_e'(\zeta)\s^{(n)}(\lambda).
\end{align*}
We subtract the above equations and substitute $1/ \p'(\zeta)$ for $\s'(\lambda)$ to get
\[(\p \circ \s)^{(n)}(\zeta) = 1/ \p'(\zeta)^n \cdot (\p^{(n)}(\zeta) - \p_e^{(n)}(\zeta)). \]
\end{proof}

As an additional application of Fa\`a di Bruno's Formula, we calculate the order of contact of $\s$ with $\T$ at $\lambda$.

\begin{prop}\label{order of contact of sigma} The map $\s$ has order of contact $n$ with $\T$ at $\lambda$.
\end{prop}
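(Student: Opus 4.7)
The plan is to transfer the problem to the upper half-plane and apply Fa\`a di Bruno's formula inductively to the local identity $\pet \circ \st = \mathrm{id}$. Using the notation of the almost-inverse subsection, $\pt = \tau_\lambda \circ \p \circ \tau_\zeta^{-1}$ and $\st = \tau_\zeta \circ \s \circ \tau_\lambda^{-1}$ both lie in $\Pz$ and fix $0$, while $\pet(z) = \ol{\pt(\ol z)}$. Let $a_k$ and $b_k$ denote the respective Taylor coefficients of $\pt$ and $\st$ at $0$. Because $\pt$ has order of contact $n$ with $\R$ at $0$, we have $a_1 > 0$, $a_2, \ldots, a_{n-1} \in \R$, and $\im a_n > 0$, so $\pet$ has the same Taylor coefficients through order $n-1$ and $\im \ol{a_n} < 0$ at order $n$. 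By the characterization of order of contact from \cref{order of contact subsection}, the goal is to verify $b_1 > 0$, $b_2, \ldots, b_{n-1} \in \R$, and $\im b_n > 0$.

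First, $(\pet \circ \st)'(0) = 1$ gives immediately $b_1 = 1/a_1 > 0$. Next, for each $k = 2, \ldots, n-1$, I would substitute $(\pet \circ \st)^{(k)}(0) = 0$ into \cref{useful Faa di Bruno}, obtaining
\[
0 = \pet^{(k)}(0)\, \st'(0)^k + F(D_{k-1}(\pet, 0), D_{k-1}(\st, 0)) + \pet'(0)\, \st^{(k)}(0),
\]
and solve for $\st^{(k)}(0)$. By induction, all of $\st'(0), \ldots, \st^{(k-1)}(0)$ are real, so the $F$-term is real as a polynomial in real arguments, and $\pet^{(k)}(0) = k!\, a_k$ is real; hence $b_k \in \R$.

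Finally, the $k = n$ case of the same expansion is
\[
0 = \pet^{(n)}(0)\, \st'(0)^n + F(D_{n-1}(\pet, 0), D_{n-1}(\st, 0)) + \pet'(0)\, \st^{(n)}(0).
\]
Taking imaginary parts kills the $F$-term (its arguments are now all real), and since $\pet^{(n)}(0) = n!\, \ol{a_n}$ satisfies $\im \ol{a_n} = -\im a_n < 0$ while $\st'(0), \pet'(0) > 0$, solving for $\im \st^{(n)}(0)$ yields $\im b_n > 0$. Transferring back to the disk via the Cayley maps yields the desired order of contact $n$ for $\s$ at $\lambda$.

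The main obstacle is purely bookkeeping: tracking reality and signs across the inductive Fa\`a di Bruno expansion. No new analytic tools are required, and the argument is a direct analogue of the proof of \cref{contact for composite}, using the fact that $\st$ is the local inverse of $\pet$ rather than a general composition.
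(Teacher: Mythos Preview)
Your proposal is correct and follows essentially the same approach as the paper. The only inessential difference is that you expand $(\pet \circ \st)^{(k)}(0) = 0$ via Fa\`a di Bruno, whereas the paper expands $(\st \circ \pet)^{(k)}(0) = 0$; both identities hold locally and either yields the inductive reality of $b_1,\ldots,b_{n-1}$ and the sign $\im b_n > 0$ in the same way.
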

\begin{proof} Let $\pt(z) = \sum_0^{\infty} a_k z^k$ and $\st(z) = \sum_0^{\infty} b_k z^k$ be the Taylor expansions of $\pt$ and $\st$ about $0$ respectively, and recall that $\pet(z) = \ol{\pt(\ol{z})} = \sum_0^{\infty} \ol{a_k} z^k$. Note that since $\p$ has order of contact $n$ with $\D$, we have that $a_0,...,a_{n-1} \in \R$ and $\im a_n >0$.

It suffices to show that $b_0 = 0, b_1, ..., b_{n-1} \in \R$ and $\im b_n >0$. By induction on $k = 2,...,n-1$ and using \fdbf ~for $(\st \circ \pet)^{(k)}(0)$, we see that $b_0,...,b_k$ are real valued. To see that $\im b_n >0$, we use \cref{useful Faa di Bruno} and write
\[ 0 = (\st \circ \pet)^{(n)}(0) = \st^{(n)}(0)\pet'(0)^n + F(D_{n-1}(\st, 0), D_{n-1}(\pet, 0)) + \st'(0)\pet^{(n)}(0).\]
Taking imaginary parts we get $\im (b_n) = \dfrac{b_1}{a_1^n} \im(a_n)$, and since $b_1 = 1/a_1 > 0$ we get that $\im (b_n) > 0$.
\end{proof}

\section{Essential normality} \label{Essential normality}

We begin by characterizing essential normality for the basic case, then prove our main theorem characterizing essential normality for $C_\p$ for all $\p \in \A$. Finally, we construct essentially normal composition operators which have arbitrary even order of contact with the unit circle at one point.

\begin{definition}
For $\epsilon > 0$ we define $\psi_{\lambda, \epsilon}$ to be the Riemann mapping from $\D$ onto $\{ |z - \lambda| < \epsilon \} \cap \D$, which fixes $\lambda$. Note that $\psi = \psi_{\lambda, \epsilon}$ extends continuously to $\T$ and analytically across $\T$ in a neighborhood of $\lambda$.
\end{definition}

\subsection{Essential Normality For Basic Composition Operators}
$ $

\begin{prop}\label{basic ess normality - part 1} Let $\p$ be a basic function with contact at $\zeta$ which fixes $\zeta$. Let $n$ be the order of contact of $\p$ with $\T$ at $\zeta$, and let $\s$ be the unique branch of $\p_e^{-1}$ defined on a neighborhood of $\zeta$ which fixes $\zeta$. There exists $\epsilon > 0$ such that for $\psi = \psi_{\zeta, \epsilon}$, the following conditions are equivalent.
\begin{enumerate}
\item \label{BEN1} $C_\psi[C_\p^{*}, C_\p]$ is compact.
\item \label{BEN2} $D_n (\s \circ \p, \zeta) = D_n (\p \circ \s, \zeta)$.
\item \label{BEN3} $\p'(\zeta) = 1$.
\end{enumerate}
\end{prop}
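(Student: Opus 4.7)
The plan is to expand the commutator, apply Proposition~\ref{super nice adjoint formula mod compacts} to both $C_\psi C_\p^*$ and $C_{\p \circ \psi} C_\p^*$, and then decide compactness of the resulting difference of composition operators via Theorem~\ref{first relation in K}.

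First, I would choose $\epsilon > 0$ small enough that $\psi(\D)$ and $(\p \circ \psi)(\D)$ both sit in $W(\p)$ and that both $\psi$ and $\p \circ \psi$ meet the hypotheses of Proposition~\ref{super nice adjoint formula mod compacts} with $\eta = \zeta$ (using $\p(\zeta) = \zeta$). Uniqueness of the boundary preimage of $\zeta$ under $\psi$ comes from the geometry of the Riemann map onto $\{|z-\zeta|<\epsilon\}\cap\D$, and uniqueness of the boundary preimage of $\zeta$ under $\p \circ \psi$ then follows because $\p$ is basic, so $\zeta$ is the only point of $\T$ mapping to $\zeta$ under $\p$. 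Writing $C_\psi[C_\p^*, C_\p] = C_\psi C_\p^* C_\p - C_{\p \circ \psi} C_\p^*$, applying Proposition~\ref{super nice adjoint formula mod compacts} to each factor, and using that $\p$ fixes $\zeta$ (so $\lambda = \zeta$ and the branch of $\p_e^{-1}$ in play is $\s$), I obtain
\[ C_\psi[C_\p^*, C_\p] \equiv \tfrac{1}{\p'(\zeta)}\bigl(C_{\p \circ \s \circ \psi} - C_{\s \circ \p \circ \psi}\bigr) \modK. \]

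Next, I would verify that both composite maps $\p \circ \s \circ \psi$ and $\s \circ \p \circ \psi$ lie in $\A$ with $F(\cdot)=\{\zeta\}$ and order of contact $n$ there. The identity $\p_e^{-1}(\T)=\{\zeta\}$ (used in the proof of Proposition~\ref{super nice adjoint formula mod compacts}) together with the maximum principle forces the unique boundary point of modulus $1$ for either composite to be $\zeta$; by Propositions~\ref{contact for composite}(i) and~\ref{order of contact of sigma}, $\s \circ \p$ and $\p \circ \s$ each have order of contact $n$ at $\zeta$, and composing with $\psi$ preserves this via Proposition~\ref{contact for composite}(ii), since $\psi$ maps a $\T$-arc through $\zeta$ into $\T$ (this arc is part of the boundary of $\psi(\D)$). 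Theorem~\ref{first relation in K} applied with coefficients $+1,-1$ then shows (\ref{BEN1}) is equivalent to $D_n(\p \circ \s \circ \psi, \zeta) = D_n(\s \circ \p \circ \psi, \zeta)$; since $\psi'(\zeta)\neq 0$, the form of Fa\`a di Bruno's formula~\cref{useful Faa di Bruno} (a step-by-step argument on the smallest index of disagreement) shows this is in turn equivalent to $D_n(\p \circ \s, \zeta) = D_n(\s \circ \p, \zeta)$, yielding (\ref{BEN2}). Finally, Proposition~\ref{s circ p} provides explicit formulas: the two data vectors agree in their first $n$ entries, and their last entries are $c/\p'(\zeta)$ and $c/\p'(\zeta)^n$ with $c\neq 0$; these agree precisely when $\p'(\zeta)^{n-1}=1$, and since $\p'(\zeta)>0$ at the fixed boundary point, this forces $\p'(\zeta)=1$, proving (\ref{BEN2})$\Leftrightarrow$(\ref{BEN3}).

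The main obstacle is the setup work in the first paragraph: simultaneously verifying all hypotheses of Proposition~\ref{super nice adjoint formula mod compacts} for both $\psi$ and $\p \circ \psi$, and confirming that $\p \circ \s \circ \psi$ and $\s \circ \p \circ \psi$ have their only $\T$-contact at $\zeta$ with order of contact exactly $n$, so that Theorem~\ref{first relation in K} can be invoked cleanly. After that, the equivalences fall out of the explicit data computations already in hand.
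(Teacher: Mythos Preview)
Your proposal is correct and follows essentially the same route as the paper's proof: choose $\epsilon$ so that both $\psi$ and $\p\circ\psi$ satisfy the hypotheses of Proposition~\ref{super nice adjoint formula mod compacts}, reduce $C_\psi[C_\p^*,C_\p]$ modulo $\K$ to $\frac{1}{|\p'(\zeta)|}(C_{\p\circ\s\circ\psi}-C_{\s\circ\p\circ\psi})$, invoke Theorem~\ref{first relation in K} and Fa\`a di Bruno to strip off $\psi$, and then use Proposition~\ref{s circ p} for the equivalence with $\p'(\zeta)=1$. The only cosmetic difference is that the paper writes $1/|\p'(\zeta)|$ where you write $1/\p'(\zeta)$, which is harmless since $\p$ fixes $\zeta$ and hence $\p'(\zeta)>0$.
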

\begin{proof} \ref{BEN1} $\iff$ \ref{BEN2}: For $W = W(\p)$, note that $\p^{-1}(W) \cap W$ is open and contains $\zeta$, and choose $\epsilon >0$ such that $ \{ |z - \zeta| \leq \epsilon \} \cap \D$ is contained in $\p^{-1}(W) \cap W$. Denote $\psi = \psi_{\zeta, \epsilon}$ and note that both $\psi$ and $\p \circ \psi$ map $\D$ into $W$, are analytic at $\zeta$ and fix $\zeta$ and satisfy $\psi^{-1}( \{ \zeta \} ) = (\p \circ \psi)^{-1}( \{ \zeta \} )= \{ \zeta \}$, and so by \cref{super nice adjoint formula mod compacts} we have
\[ C_{\psi} C_\p^* \equiv \frac{1}{|\p'(\zeta)|}C_{\s \circ \psi} \modK , \qquad
C_{\p \circ \psi} C_\p^* \equiv \frac{1}{|\p'(\zeta)|}C_{\s \circ \p \circ \psi} \modK. \]
Thus, we can express $C_\psi[C_\p^{*}, C_\p]$ in the Calkin algebra by
\begin{align*}
C_\psi[C_\p^{*}, C_\p] \equiv C_\psi C_\p^* C_\p - C_{\p \circ \psi} C_\p^*
\equiv \frac{1}{|\p'(\zeta)|}(C_{\p \circ \s \circ \psi} - C_{\s \circ \p \circ \psi}) \modK.
\end{align*}

Note that $\psi$ maps an arc of $\T$ containing $\zeta$ into $\T$, and recall that by \cref{order of contact of sigma} $\s$ has order of contact $n$ at $\zeta$. Thus, by \cref{contact for composite} both $\s \circ \p \circ \psi$ and $\p \circ \s \circ \psi$ have order of contact $n$ at $\zeta$. Note also that $\s \circ \p \circ \psi$ and $\p \circ \s \circ \psi$ are is in the class $\A$ with $F (\s \circ \p \circ \psi) = F(\p \circ \s \circ \psi) = \{ \zeta \}$. Therefore, by \cref{first relation in K}, $C_{\s \circ \p \circ \psi}-C_{\p \circ \s \circ \psi}$ is compact if and only if $D_n (\s \circ \p \circ \psi, \zeta) = D_n (\p \circ \s \circ \psi, \zeta)$.
Since $\psi$ is invertible in a neighborhood of $\zeta$ and as a consequence of \fdbf, we get that $D_n (\s \circ \p \circ \psi, \zeta) = D_n (\p \circ \s \circ \psi, \zeta)$ if and only if $D_n (\s \circ \p, \zeta) = D_n (\p \circ \s, \zeta)$.

\ref{BEN2} $\iff$ \ref{BEN3}: By \cref{s circ p}, the $n^{th}$ order data for $\s \circ \p$ and $\p \circ \s$ is equal if and only if $\p'(\zeta) = \p'(\zeta)^n$. Note that $\p'(\zeta) > 0$ since $\p$ fixes $\zeta$, and so $\p'(\zeta) = \p'(\zeta)^n$ if and only if $\p'(\zeta) = 1$.
\end{proof}

If $C_\psi$ were bounded below, compactness of $C_\psi A$ would imply compactness of $A$ for any operator $A$. We prove a weaker result using a technique from \cite[section~4]{MR0350487}. We denote the characteristic function on $\T$ for the arc $\Gamma_\delta = \{ |z-\zeta| < \delta \} \cap \T$ by $\chi_\delta$.

\begin{lem} \label{basic ess normality - part 2} Let $\epsilon >0$ and $\psi = \psi_{\zeta, \epsilon}$. Then for any $\chi = \chi_\delta$ with $0< \delta < \epsilon$, and any operator $A$ we have
\[ \text{ $C_\psi A$ is compact $\Longrightarrow$ $T_\chi A$ is compact.} \]
\end{lem}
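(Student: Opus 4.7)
The plan is to exploit the conformal geometry of $\psi = \psi_{\zeta,\epsilon}$ to convert the multiplier $\chi_\delta$ on $\T$ into a multiplier on a preimage arc where composition with $\psi$ is easy to control. Because the lens $\{|z-\zeta|<\epsilon\}\cap\D$ is a Jordan domain whose boundary meets $\T$ in the open arc $\Gamma_\epsilon$, the Schwarz reflection principle (applied to $\T$) implies that $\psi$ extends analytically across the open arc $J_\epsilon := \psi^{-1}(\Gamma_\epsilon) \subset \T$, mapping it bijectively onto $\Gamma_\epsilon$ with $\psi'$ continuous and nowhere zero on $J_\epsilon$. Since $\delta < \epsilon$, the preimage $J_\delta := \psi^{-1}(\Gamma_\delta)$ has closure compactly contained in $J_\epsilon$, so $M := \sup_{\overline{J_\delta}}|\psi'|$ is finite. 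Let $\chi_{J_\delta}$ denote the characteristic function of $J_\delta$, a bounded multiplication symbol.

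Changing variables along the boundary via $e^{i\eta}=\psi(e^{i\theta})$ on $J_\delta$ yields, for every $g\in L^2(\T)$,
\[
\int_{\Gamma_\delta}|g(e^{i\eta})|^2\,d\eta
=\int_{J_\delta}|g(\psi(e^{i\theta}))|^2|\psi'(e^{i\theta})|\,d\theta
\leq M\int_{J_\delta}|g(\psi(e^{i\theta}))|^2\,d\theta,
\]
which rewrites as the key inequality
\[
\|M_\chi g\|_{L^2}^2 \leq M\,\|M_{\chi_{J_\delta}} C_\psi g\|_{L^2}^2.
\]
This uses that on $J_\epsilon$ the boundary function of $C_\psi g$ agrees with $g\circ\psi$ a.e., which is valid because $\psi$ is a genuine conformal map in a neighborhood of $J_\epsilon$ and thus respects non-tangential limits.

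Now suppose $C_\psi A$ is compact, and let $\{f_n\}\subset H^2$ be bounded with $f_n\to 0$ weakly. Compactness gives $\|C_\psi A f_n\|_{H^2}\to 0$, and the boundedness of $M_{\chi_{J_\delta}}\colon H^2\to L^2$ then gives $\|M_{\chi_{J_\delta}}C_\psi A f_n\|_{L^2}\to 0$. Applying the key inequality with $g=A f_n$ produces $\|M_\chi A f_n\|_{L^2}\to 0$, so $M_\chi A\colon H^2\to L^2$ is compact by the weak-to-norm characterization of compactness. Finally, $T_\chi A = P M_\chi A$ with $P\colon L^2\to H^2$ bounded, so $T_\chi A$ is compact on $H^2$. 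The only delicate step is verifying the analytic extension of $\psi$ across $J_\epsilon$ with non-vanishing derivative via Schwarz reflection; once that is in hand, the change of variables and the compactness passage are routine.
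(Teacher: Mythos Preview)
Your proof is correct and follows essentially the same approach as the paper: both use the analytic extension of $\psi$ across the arc $\psi^{-1}(\Gamma_\delta)$ to perform a change of variables yielding $\|M_\chi g\|_{L^2}^2 \le M\|C_\psi g\|_{L^2}^2$ (the paper drops your harmless intermediate factor $M_{\chi_{J_\delta}}$), and then deduce compactness of $M_\chi A$, hence of $T_\chi A = P M_\chi A$. Your write-up is slightly more explicit about the Schwarz reflection justification and the agreement of boundary values with $g\circ\psi$, but the argument is the same.
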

\begin{proof} Suppose that $C_\psi A$ is compact. It suffices to show that the $H^2$ to $L^2$ operator $M_\chi A$ is compact. Let $t_1, t_2$ be such that $e^{it}$ parametrizes the curve $\Gamma_\delta$ for $t_1 \leq t \leq t_2$, and let $\alpha_1, \alpha_2$ be such that $\psi(e^{it})$ parametrizes the curve $\Gamma_\delta$ for $\alpha_1 \leq t \leq \alpha_2$. Then
\begin{align*}
\| M_{\chi} f \|_{L_2}^{2} &= \int_{t_1}^{t_2} |f(e^{i\theta})|^2 \frac{d\theta}{2\pi}
= \int_{\alpha_1}^{\alpha_2} |f(\psi(e^{it}))|^2 |\psi'(e^{it})| \frac{dt}{2\pi} \\
&\leq m \int_{\alpha_1}^{\alpha_2} |f(\psi(e^{it}))|^2 \frac{dt}{2\pi} \leq m \| C_\psi f \|^2_{L^2},
\end{align*}
where $m = \max \{ \psi'(e^{it}) \colon \alpha_1 \leq t \leq \alpha_2 \}$ is finite since $\psi$ can be analytically extended to a neighborhood of $\Gamma$.

Note that $C_\psi A$ is compact from $H^2$ to $H^2$ and so it is compact from $H^2$ to $L^2$, and that for all $f \in H^2$ we have that $\| M_{\chi} A f \|_{L_2} \leq  \sqrt{m} \| C_\psi A f\|_{L_2}$. Thus $M_{\chi} A$ is compact from $H^2$ to $L^2$.
\end{proof}

\begin{prop} \label{basic essential normality} Suppose $\p$ is a basic function with contact at $\zeta$ which fixes $\zeta$. Then $C_\p$ is essentially normal if and only if $\p'(\zeta)=1$.
\end{prop}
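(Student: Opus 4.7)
The plan is to deduce both directions from \cref{basic ess normality - part 1}. For the ``only if'' direction, if $C_\p$ is essentially normal then $[C_\p^*, C_\p]$ is compact, and post-composing with the bounded operator $C_\psi$ (for $\psi = \psi_{\zeta,\epsilon}$ as provided by \cref{basic ess normality - part 1}) preserves compactness, so $C_\psi[C_\p^*, C_\p]$ is compact; by \cref{basic ess normality - part 1} this forces $\p'(\zeta) = 1$.

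The substantive part is the ``if'' direction. Assuming $\p'(\zeta) = 1$, \cref{basic ess normality - part 1} yields $\psi = \psi_{\zeta,\epsilon}$ with $C_\psi[C_\p^*, C_\p]$ compact, and then \cref{basic ess normality - part 2} gives that $T_{\chi_\delta}[C_\p^*, C_\p]$ is compact for every $0 < \delta < \epsilon$. Since $\chi_\delta + (1 - \chi_\delta) \equiv 1$ on $\T$, the corresponding Toeplitz operators satisfy $T_{\chi_\delta} + T_{1-\chi_\delta} = I$ on $H^2$, so it will suffice to prove that $T_{1-\chi_\delta}[C_\p^*, C_\p]$ is compact as well; then the commutator decomposes as the sum of two compact operators.

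The remaining task is to show that $T_{1-\chi_\delta} C_\p^* C_\p$ and $T_{1-\chi_\delta} C_\p C_\p^*$ are both compact, for which the reductions of Section 4 suffice. The function $1-\chi_\delta$ vanishes identically on the open arc $\Gamma_\delta$, which contains $\lambda = \zeta$; in particular it is smooth on $\Gamma_\delta$ and continuous at $\zeta$ with value $0$. Applying \cref{M_w C_p star formula} with the arc $I = \Gamma_\delta$ gives $T_{1-\chi_\delta} C_\p^* \equiv 0 \modK$, so the first product is compact; applying \cref{M_w C_p formula} gives $M_{1-\chi_\delta} C_\p \equiv 0 \modK$ as an $H^2 \to L^2$ operator, hence $T_{1-\chi_\delta} C_\p = P M_{1-\chi_\delta} C_\p$ is compact and so is the second product. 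The main obstacle is precisely this bridging step: propagating compactness from the localized statement about $T_{\chi_\delta}[C_\p^*,C_\p]$ to compactness of the full commutator. The Toeplitz partition of unity combined with the Calkin-algebra reductions of Section 4 is the key machinery that makes this possible.
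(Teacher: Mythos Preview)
Your proof is correct and follows essentially the same route as the paper. The only cosmetic difference is that the paper applies \cref{M_w C_p formula} and \cref{M_w C_p star formula} with the weight $\chi_\delta$ (which equals $1$ at $\zeta$) to conclude directly that $T_{\chi_\delta}[C_\p^*,C_\p]\equiv[C_\p^*,C_\p]\modK$, whereas you apply the same two results with the complementary weight $1-\chi_\delta$ (which equals $0$ at $\zeta$) and then invoke $T_{\chi_\delta}+T_{1-\chi_\delta}=I$; these are two phrasings of the same computation.
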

\begin{proof} By \cref{basic ess normality - part 1}, there exists $\epsilon >0$ such that for $\psi = \psi_{\zeta, \epsilon}$, we have $C_\psi[C_\p^*, C_\p]$ is compact if and only if $\p'(\zeta) = 1$. Thus, it suffices to show that compactness of $C_\psi[C_\p^*, C_\p]$ implies compactness of $[C_\p^*, C_\p]$. 

Suppose that $C_\psi[C_\p^*, C_\p]$ is compact. Then by \cref{basic ess normality - part 2} with $\delta = \epsilon / 2$ and $\chi = \chi_\delta$, we get that $T_\chi [C_\p^*, C_\p]$ is compact.
Now since $\p \in \A$ with $F(\p) =\{ \zeta \}$ and $\chi$ is continuously differentiable at $\zeta$, using  \cref{M_w C_p formula} and \cref{M_w C_p star formula} we get
\[ T_\chi [C_\p^*, C_\p]
\equiv \chi(\p(\zeta)) C_\p^* C_\p - \chi(\zeta) C_\p C_\p^*
\equiv [C_\p^*, C_\p] \modK, \]
and so $[C_\p^*, C_\p]$ is compact as well.
\end{proof}

\subsection{General Essential Normality} \label{General Essential Normality}
$ $

In this section we prove our main theorem identifying the essentially normal composition operators induced by a general function in the class $\A$. The first statement in the following lemma is due to Clifford and Zheng \cite{MR1757084} in the case where $\p_1$ and $\p_2$ are linear fractional maps.

\begin{lem}\label{combinations of basic and adjoint} Let $\p_1$ and $\p_2$ be basic functions with contact at $\zeta_1$ and $\zeta_2$ respectively, and denote $\lambda_1 =\p_1(\zeta_1)$ and $\lambda_2 = \p_2(\zeta_2)$. Then the following hold.
\begin{enumerate}
\item If $\zeta_1 \neq \zeta_2$ then $ C_{\p_1}^*C_{\p_2} \equiv 0 \modK$.
\item If $\lambda_1 \neq \lambda_2$ then $ C_{\p_2}C_{\p_1}^* \equiv 0 \modK$. 
\end{enumerate}
\end{lem}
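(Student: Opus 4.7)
The plan is to exploit the two localization results just proved---\cref{M_w C_p formula} and \cref{M_w C_p star formula}---together with a suitably chosen smooth cutoff on $\T$. The guiding principle is that for a basic $\p$ with $F(\p) = \{\zeta\}$ and $\lambda = \p(\zeta)$, the operator $C_\p$ is \emph{localized at $\zeta$} (a continuous multiplier acts on it modulo $\K$ through its value at $\zeta$), while $C_\p^*$ is \emph{localized at $\lambda$}. The two hypotheses $\zeta_1 \neq \zeta_2$ and $\lambda_1 \neq \lambda_2$ are then handled by the same strategy applied to the appropriate pair of points.

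For part (i), I would choose a $C^\infty$ function $\chi$ on $\T$ with $\chi \equiv 1$ near $\zeta_2$ and $\chi \equiv 0$ near $\zeta_1$. By \cref{M_w C_p formula} applied to $\p_2$ with weight $\chi$, $M_\chi C_{\p_2} \equiv \chi(\zeta_2) C_{\p_2} = C_{\p_2} \modK$ as $H^2 \to L^2$ operators, hence $T_\chi C_{\p_2} = P M_\chi C_{\p_2} \equiv C_{\p_2} \modK$ on $H^2$. Multiplying on the left by $C_{\p_1}^*$ yields
\[ C_{\p_1}^* C_{\p_2} \equiv C_{\p_1}^* T_\chi C_{\p_2} \modK. \]
Applying \cref{M_w C_p formula} a second time, now to $\p_1$ with the continuous weight $\bar\chi$ (which vanishes at $\zeta_1$), gives $T_{\bar\chi} C_{\p_1} \equiv 0 \modK$. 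Taking adjoints, $C_{\p_1}^* T_\chi = (T_{\bar\chi} C_{\p_1})^*$ is compact, and composing with the bounded operator $C_{\p_2}$ preserves compactness.

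For part (ii), the strategy is dual. Choose $\chi \in C^\infty(\T)$ with $\chi \equiv 1$ near $\lambda_1$ and $\chi \equiv 0$ near $\lambda_2$. By \cref{M_w C_p star formula} applied to $\p_1$, $T_\chi C_{\p_1}^* \equiv \chi(\lambda_1) C_{\p_1}^* = C_{\p_1}^* \modK$, so
\[ C_{\p_2} C_{\p_1}^* \equiv C_{\p_2} T_\chi C_{\p_1}^* \modK. \]
Applying \cref{M_w C_p star formula} once more, this time to $\p_2$ with the smooth weight $\bar\chi$ (which vanishes at $\lambda_2$), gives $T_{\bar\chi} C_{\p_2}^* \equiv 0 \modK$. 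Its adjoint $C_{\p_2} T_\chi$ is therefore compact, and composing on the right with the bounded $C_{\p_1}^*$ finishes the proof.

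I do not anticipate a real obstacle, since the two earlier localization results are tailored precisely for this situation. The only mild subtlety is recognizing the symmetry between the two parts---contact points versus image points, \cref{M_w C_p formula} versus \cref{M_w C_p star formula}---and keeping track of complex conjugation when transferring the cutoff through an adjoint.
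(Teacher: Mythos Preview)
Your proposal is correct and follows essentially the same argument as the paper: localize via \cref{M_w C_p formula} and \cref{M_w C_p star formula} with a weight equal to $1$ at one distinguished point and $0$ at the other, then pass through the adjoint using $T_w^* = T_{\bar w}$. The only cosmetic difference is that the paper uses the explicit linear functions $b(z)=\dfrac{z-\zeta_1}{\zeta_2-\zeta_1}$ and $c(z)=\dfrac{z-\lambda_2}{\lambda_1-\lambda_2}$ in place of your smooth cutoffs.
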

\begin{proof} For the first part, suppose that $\zeta_1 \neq \zeta_2$ and note that $b(z) = \frac{z-\zeta_1}{\zeta_2-\zeta_1}$ is continuous at $\zeta_1$ and $\zeta_2$ and satisfies $b(\zeta_1)=0$ and $b(\zeta_2)=1$. Then by \cref{M_w C_p formula}, $T_b C_{\p_2} \equiv C_{\p_2}$ and $T_{\ol{b}} C_{\p_1} \equiv 0$ (mod $\K$), and so,
\[ C_{\p_1}^*C_{\p_2} \equiv  C_{\p_1}^* (T_b C_{\p_2}) 
\equiv  C_{\p_1}^* T_{\ol{b}}^* C_{\p_2} \equiv (T_{\ol{b}} C_{\p_1})^* C_{\p_2} \equiv 0 \modK. \]
For the second part, suppose $\lambda_1 \neq \lambda_2$ and note that $c(z) = \frac{z-\lambda_2}{\lambda_1-\lambda_2}$ is continuously differentiable in neighborhoods of $\lambda_1$ and $\lambda_2$ and satisfies $c(\lambda_1) = 1$ and $c(\lambda_2)=0$. Then by \cref{M_w C_p star formula},
$T_c C_{\p_1}^* \equiv C_{\p_1}^*$ and $T_{\ol{c}} C_{\p_2}^* \equiv 0$ (mod $\K$), and so
\[ C_{\p_2}C_{\p_1}^* \equiv  C_{\p_2}(T_c C_{\p_1}^*) 
\equiv  (C_{\p_2}^*)^* (T_{\ol{c}})^* C_{\p_1}^* \equiv (T_{\ol{c}} C_{\p_2}^*)^* C_{\p_1}^* \equiv 0 \modK. \]
\end{proof}

\begin{prop}\label{unique j}
Let $\p$ in $\A$ be such that $C_\p$ is essentially normal. Then $\p$ permutes the elements of $F(\p)$.
Furthermore, if $\p(\zeta) = \zeta'$ for some $\zeta, \zeta' \in F(\p)$, then $\p$ has equal order of contact, say $n$, with $\T$ at $\zeta$ and $\zeta'$, and we have
\[\dfrac{\p'(\zeta)^n}{\p'(\zeta')} = \dfrac{c}{c'},\]
where $c$ and $c'$ are non zero constants uniquely determined by $D_{n}(\p, \zeta)$ and $D_{n}(\p, \zeta')$ respectively. 
\end{prop}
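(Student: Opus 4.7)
The plan is to use the sum decomposition \cref{sum decomposition into basics} to write $C_\p$ as a sum of basic-composition operators modulo $\K$, to extract local information about the commutator by sandwiching with Toeplitz operators, and then to invoke the adjoint formula \cref{super nice adjoint formula mod compacts} and \cref{first relation in K} to reduce everything to Taylor-data matching via \fdbf. Setting $\lambda_i := \p(\zeta_i)$ and writing $C_\p \equiv \sum_{i=1}^{r} C_{\p_i} \pmod{\K}$ where $\p_i$ is basic at $\zeta_i$ with $D_{n_i}(\p_i,\zeta_i) = D_{n_i}(\p, \zeta_i)$, an expansion of $[C_\p^*, C_\p]$ combined with \cref{combinations of basic and adjoint} (which kills $C_{\p_i}^* C_{\p_j}$ for $i \neq j$ and $C_{\p_j} C_{\p_i}^*$ when $\lambda_i \neq \lambda_j$) gives
\[
[C_\p^*, C_\p] \equiv \sum_i C_{\p_i}^* C_{\p_i} - \sum_{\substack{i,j:\, \lambda_i = \lambda_j}} C_{\p_j} C_{\p_i}^* \pmod{\K}.
\]

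For each $\alpha \in \T$ let $\chi$ be a real smooth bump with $\chi(\alpha) = 1$ supported in a small arc about $\alpha$, and sandwich the identity above by $T_\chi$. Using \cref{M_w C_p formula,M_w C_p star formula} (and their adjoints, since $\chi$ is real) to collapse each factor, this becomes
\[
\sum_i \chi(\lambda_i)^2\, C_{\p_i}^* C_{\p_i} \equiv \sum_{\substack{i,j:\, \lambda_i = \lambda_j}} \chi(\zeta_i)\,\chi(\zeta_j)\, C_{\p_j} C_{\p_i}^* \pmod{\K}.
\]
Specializing $\alpha$ to a hypothetical $\lambda_l \notin F(\p)$ makes the right-hand side vanish while leaving $\sum_{i:\, \lambda_i = \alpha} C_{\p_i}^* C_{\p_i} \equiv 0 \pmod{\K}$; since a sum of positive operators is compact iff each summand is, and no $C_{\p_i}^* C_{\p_i}$ is compact, this is impossible, so $\p(F(\p)) \subseteq F(\p)$. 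Specializing $\alpha = \zeta_k$ then collapses the identity to $\sum_{i:\, \lambda_i = \zeta_k} C_{\p_i}^* C_{\p_i} \equiv C_{\p_k} C_{\p_k}^* \pmod{\K}$; non-compactness of the right side forces some $\zeta_i$ to map to $\zeta_k$, yielding surjectivity and hence bijectivity of $\p|_{F(\p)}$. In particular, each $k$ admits a unique $i = i(k)$ with $\lambda_{i(k)} = \zeta_k$, and we isolate $C_{\p_{i(k)}}^* C_{\p_{i(k)}} \equiv C_{\p_k} C_{\p_k}^* \pmod{\K}$.

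Setting $\zeta = \zeta_{i(k)}$ and $\zeta' = \zeta_k$, pre-multiply both sides by $C_{\psi_k}$ with $\psi_k := \psi_{\zeta', \epsilon}$ and $\epsilon$ small, and apply \cref{super nice adjoint formula mod compacts} to each side (on the right, verifying that $\p_k \circ \psi_k$ is analytic at $\zeta'$, maps $\D$ into $W(\p_k)$, and has $\zeta'$ as its unique boundary preimage of $\lambda_k$ since $\p_k$ is basic); this yields
\[
\frac{1}{|\p'(\zeta)|}\, C_{\p_{i(k)} \circ \s_{i(k)} \circ \psi_k} \equiv \frac{1}{|\p'(\zeta')|}\, C_{\s_k \circ \p_k \circ \psi_k} \pmod{\K},
\]
where $\s_j$ is the branch of $\p_{j,e}^{-1}$ mapping $\lambda_j$ to $\zeta_j$. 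Both inducing maps lie in $\A$ with unique contact point at $\zeta'$, and their orders of contact there equal $n_{i(k)}$ and $n_k$ by \cref{contact for composite} (since composition with the biholomorphisms $\s_j$ and $\psi_k$ preserves order). Applying \cref{first relation in K}, compactness of the difference forces the orders to coincide---giving (2) with $n := n_{i(k)} = n_k$---and forces the $n$th-order derivative data at $\zeta'$ to match. \cref{s circ p} shows that the first $n-1$ derivatives of both $\p_{i(k)} \circ \s_{i(k)}$ and $\s_k \circ \p_k$ at $\zeta'$ equal $(\zeta', 1, 0, \ldots, 0)$, while their $n$th derivatives are $c/\p'(\zeta)^n$ and $c'/\p'(\zeta')$ respectively (here $c = \p^{(n)}(\zeta) - \p_e^{(n)}(\zeta)$ is determined by $D_n(\p, \zeta)$ via \fdbf, and similarly for $c'$). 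Composing with $\psi_k$ contributes a common extra term by \fdbf, so the matching condition collapses to $c/\p'(\zeta)^n = c'/\p'(\zeta')$, which rearranges to (3). The chief obstacle is the careful verification of the three hypotheses of \cref{super nice adjoint formula mod compacts} on the right-hand side and the pairing of the two different denominator forms---namely $\p'(\zeta)^n$ versus $\p'(\zeta)$---that emerge from the two cases of \cref{s circ p}.
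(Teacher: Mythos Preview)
Your proof is correct and follows the same overall architecture as the paper's: decompose $C_\p$ via \cref{sum decomposition into basics}, localize the commutator using Toeplitz-type multiplications and \cref{M_w C_p formula,M_w C_p star formula}, pass to composition operators induced by maps in $\A$ via \cref{super nice adjoint formula mod compacts}, and read off the conclusions from \cref{first relation in K} and \cref{s circ p}.

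The one genuine difference is in how you establish that $\p$ permutes $F(\p)$. The paper multiplies on one side by $M_b$ for a polynomial $b$ picking out $\zeta_1$, then by $C_\psi$, and only then invokes \cref{first relation in K}: the resulting compact combination \cref{compact combination} contains terms $C_{\s_k \circ \p_1 \circ \psi}$ whose $0$th-order data at $\zeta_1$ are the distinct points $\zeta_k$, so \cref{first relation in K} forces all but one coefficient to vanish, yielding distinctness of the $\lambda_k$ and existence of a unique preimage simultaneously. You instead sandwich by $T_\chi$ on both sides and exploit positivity: if some $\lambda_l \notin F(\p)$ the sandwich isolates a compact sum $\sum C_{\p_i}^* C_{\p_i}$ of non-compact positive operators, a contradiction; then with $\alpha = \zeta_k$ you isolate $\sum_{\lambda_i = \zeta_k} C_{\p_i}^* C_{\p_i} \equiv C_{\p_k} C_{\p_k}^*$ and use non-compactness of the right side for surjectivity. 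Your route separates the qualitative permutation statement from the quantitative derivative relation and avoids one application of \cref{first relation in K}; the paper's route handles both in a single pass but requires tracking a larger linear combination. Both are clean, and they reconverge for the final data-matching step.
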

\begin{proof} 
Let $F(\p) = \{ \zeta_1 = \zeta, ..., \zeta_r \}$ and let $\p_1, ..., \p_r$ be the basic functions with contact of order $n_1, ..., n_r$ with $\T$ at $\zeta_1, ..., \zeta_r$ respectively, guaranteed by \cref{sum decomposition into basics}, so that
\[ C_{\p} \equiv C_{\p_1} + ... + C_{\p_r} \modK. \]
For each $j$, we denote $\lambda_j = \p(\zeta_j)$, and let $\s_j$ be the unique branch of $(\p_j)_e^{-1}$ defined in some neighborhood of $\lambda_j$ which maps $\lambda_j$ to $\zeta_j$.

Let $b$ be a polynomial such that $b(\zeta_1) = 1$ and $b$ is $0$ at all the points in $\{ \zeta_2,... \zeta_r, \lambda_1,....,\lambda_r \} \setminus \{ \zeta_1\}$. Then by \cref{combinations of basic and adjoint}, \cref{M_w C_p formula} and \cref{M_w C_p star formula} we get that

\begin{align}\label{first equivalence}
M_b[C_\p, C_\p^*] &\equiv \sum_{j,k=1,...,r} M_b C_{\p_k}C_{\p_j}^* - M_b C_{\p_j}^*C_{\p_k} \nonumber \\
&\equiv \sum_{\substack{{k=1...r} \\ \lambda_k=\lambda_1 }} C_{\p_1}C_{\p_k}^* - \sum_{\substack{{j=1...r} \\ \lambda_j = \zeta_1 }} C_{\p_j}^*C_{\p_j} \modK.
\end{align}

We define a neighborhood $W$ of $\zeta_1$ by
\[ W = ( \bigcap_{\lambda_j = \zeta_1}W_j )\cap ( \bigcap_{\lambda_k = \lambda_1}\p_1^{-1}(W_k) ), \]
where $W_j = W(\p_j)$ denotes the neighborhood of $\lambda_j$ used in \cref{super nice adjoint formula mod compacts}, and let $\psi = \psi_{\zeta_1, \epsilon}$ with $\epsilon >0$ such that $\{ |z - \zeta_1| \leq \epsilon \} \cap \D$ is contained in $W$. Note that for each $j$ such that $\lambda_j = \zeta_1$, the map $\psi$ satisfies
\begin{itemize}
\item $\psi(\D) \subset W_j$;
\item $\psi$ is analytic at $\zeta_1$ and $\psi(\zeta_1) = \zeta_1$;
\item $\psi^{-1}(\{ \zeta_1 \}) = \{ \zeta_1 \}$.
\end{itemize}
Similarly, for each $k$ such that $\lambda_k = \lambda_1$, the map $\p_1 \circ \psi$ satisfies
\begin{itemize}
\item $(\p_1 \circ \psi)(\D) \subset \p_1(W) \subset W_k$;
\item $\p_1 \circ \psi$ is analytic at $\zeta_1$ and $(\p_1 \circ \psi)(\zeta_1) = \lambda_k$;
\item $(\p_1 \circ \psi)^{-1}(\{ \lambda_k \}) = \{ \zeta_1 \}$.
\end{itemize}

Using \cref{first equivalence} and applying \cref{super nice adjoint formula mod compacts} multiple times, we get
\begin{align*}
C_\psi M_b [C_\p, C_\p^*] 
&\equiv \sum_{\substack{{k=1...r} \\ \lambda_k=\lambda_1 }} C_{\p_1 \circ \psi}C_{\p_k}^* - \sum_{\substack{{j=1...r} \\ \lambda_j = \zeta_1 }} C_\psi C_{\p_j}^*C_{\p_j} \nonumber \\
&\equiv \sum_{\substack{{k=1...r} \\ \lambda_k=\lambda_1 }} \frac{1}{|\p'(\zeta_k)|}C_{\s_k \circ \p_1 \circ \psi}
- \sum_{\substack{{j=1...r} \\ \lambda_j = \zeta_1}} \frac{1}{|\p'(\zeta_j)|} C_{\p_j \circ \s_j \circ \psi}
\modK,
\end{align*}
where all the inducing maps $\s_k \circ \p_1 \circ \psi$ and $\p_j \circ \s_j \circ \psi$ above are in the class $\A$ with $F(\s_k \circ \p_1 \circ \psi) = F(\p_j \circ \s_j \circ \psi) = \{ \zeta_1 \}$. Note that the maps $\s_k \circ \p_1 \circ \psi$ above all map $\zeta_1$ to $\zeta_k$ and the maps $\p_j \circ \s_j \circ \psi$ above all fix $\zeta_1$.

Recalling that $C_\p$ is essentially normal, we see that 
\begin{align} \label{compact combination}
\sum_{\substack{{k=1...r} \\ \lambda_k=\lambda_1 }} \frac{1}{|\p'(\zeta_k)|}C_{\s_k \circ \p_1 \circ \psi}
- \sum_{\substack{{j=1...r} \\ \lambda_j = \zeta_1}} \frac{1}{|\p'(\zeta_j)|} C_{\p_j \circ \s_j \circ \psi}
\end{align}
is a compact linear combination of composition operators. Now \cref{first relation in K} combined with the properties of the inducing maps in \cref{compact combination} imply that $\lambda_k \neq \lambda_1$ for $k \neq 1$. By symmetry we conclude that $\lambda_1,...,\lambda_r$ are all distinct. Thus the first sum in \cref{compact combination} consists of exactly one term, and the second sum consists of at most one term. Again using \cref{first relation in K} we see that the second sum must also consist of exactly one term and so there is a unique $j_1$ such that $\lambda_{j_1} = \p(\zeta_{j_1}) = \zeta_1$. 
By symmetry, for any $k \in \{ 1, ..., r \}$ the exists a unique $j_k$ such that $\lambda_{j_k} = \p(\zeta_{j_k}) = \zeta_k$. Thus, $\p$ acts as a one-to-one map of $F(\p)$ onto itself. This proves the first statement.

Furthermore, \cref{compact combination} becomes
$\frac{1}{|\p'(\zeta_1)|}C_{\s_1 \circ \p_1 \circ \psi} - \frac{1}{|\p'(\zeta_{j_1})|} C_{\p_{j_1} \circ \s_{j_1} \circ \psi}$,
and so by \cref{first relation in K}, we have that $\s_1 \circ \p_1 \circ \psi$ and $\p_{j_1} \circ \s_{j_1} \circ \psi$ have equal order of contact, say $n$, at $\zeta_1$ and $D_n(\s_1 \circ \p_1 \circ \psi, \zeta_1) = D_n(\p_{j_1} \circ \s_{j_1} \circ \psi, \zeta_1 = \lambda_{j_1})$.
We use \cref{contact for composite} combined with \cref{order of contact of sigma} and the properties of $\psi$ to conclude that $\p$ has order of contact $n$ with $\T$ at both $\zeta_1$ and $\zeta_{j_1}$. We use \fdbf combined with invertiblity of $\psi$ in a neighborhood of $\zeta$ to conclude that $D_n (\s_1 \circ \p_1, \zeta_1) = D_n (\p_{j_1} \circ \s_{j_1}, \lambda_{j_1})$, and in particular $(\s_1 \circ \p_1)^{(n)}(\zeta_1) = (\p_{j_1} \circ \s_{j_1})^{(n)}(\lambda_{j_1})$. Now by \cref{s circ p} we get that
$\dfrac{c_1}{\p_1'(\zeta_1)} = \dfrac{c_{j_1}}{\p_{j_1}'(\zeta_{j_1})^n}$, where $c_j = \p_j^{(n_j)}(\zeta_j) - (\p_j)_e^{(n_j)}(\zeta_j)$ is a non zero constant determined by $D_{n_j}(\p, \zeta_j)$. By symmetry, the proof is complete.
\end{proof}

We now have the tools to prove our main theorem identifying the non trivially essentially normal composition operators induced by maps in $\A$.

\begin{thm}\label{ess normality criterion} Let $\p$ be in $\A$. Then $C_\p$ is non trivially essentially normal if and only if $F(\p) = \{ \zeta \}$ for some $\zeta \in \T$, $\p$ fixes $\zeta$, and $\p'(\zeta)=1$.
\end{thm}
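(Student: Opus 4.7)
First I would dispatch the reverse implication quickly. Assume $F(\p)=\{\zeta\}$, $\p(\zeta)=\zeta$, and $\p'(\zeta)=1$. By \cref{sum decomposition into basics}, $C_\p\equiv C_{\p_0}\modK$ for a basic function $\p_0$ satisfying $D_n(\p_0,\zeta)=D_n(\p,\zeta)$; in particular $\p_0$ fixes $\zeta$ with $\p_0'(\zeta)=1$. Then \cref{basic essential normality} produces $[C_{\p_0}^*,C_{\p_0}]\in\K$, and since $C_\p^*\equiv C_{\p_0}^*\modK$ this passes to $[C_\p^*,C_\p]$. Non-triviality is automatic for $\p\in\A$: $F(\p)\neq\emptyset$ rules out compactness, and Schwartz's normality criterion $\p(z)=\lambda z$ with $|\lambda|\le 1$ is incompatible with having $F(\p)=\{\zeta\}$.

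For the forward direction, assume $C_\p$ is non-trivially essentially normal. \cref{unique j} does most of the work: $\p$ permutes the finite set $F(\p)$, and whenever $\p(\eta)=\xi$ with $\eta,\xi\in F(\p)$, the common order of contact $n$ satisfies
\[
\frac{\p'(\eta)^n}{\p'(\xi)}=\frac{c_\eta}{c_\xi}.
\]
The plan is to run this identity around a $\p$-cycle $\zeta_1\to\zeta_2\to\cdots\to\zeta_p\to\zeta_1$ inside $F(\p)$. Multiplying the $p$ resulting equalities makes the ratios $c_{\zeta_i}/c_{\zeta_{i+1}}$ telescope to $1$. Substituting $\p'(\zeta_i)=\bar\zeta_i\zeta_{i+1}|\p'(\zeta_i)|$, the unimodular factors collapse cyclically, so $\prod_i\p'(\zeta_i)=\prod_i|\p'(\zeta_i)|>0$, leaving
\[
\Bigl(\prod_{i=1}^p|\p'(\zeta_i)|\Bigr)^{n-1}=1,\qquad\text{i.e.,}\qquad (\p^p)'(\zeta_1)=\prod_{i=1}^p|\p'(\zeta_i)|=1.
\]

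The punchline is a double application of the Denjoy-Wolff theorem. Since $\p\in\A$ excludes finite Blaschke products, neither $\p$ nor any iterate $\p^p$ is the identity or an automorphism, so each has a unique Denjoy-Wolff point and at most one boundary fixed point with angular derivative $\le 1$. If some cycle had length $p>1$, it would force $p\ge 2$ distinct boundary fixed points of $\p^p$ all sharing angular derivative $1$, violating that uniqueness; hence every cycle has length $1$ and every point of $F(\p)$ is a fixed point of $\p$. The cycle identity with $p=1$ then collapses to $\p'(\zeta)^{n-1}=1$, and together with $\p'(\zeta)>0$ at a boundary fixed point (Julia-Carath\'eodory) this forces $\p'(\zeta)=1$. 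A final pass through Denjoy-Wolff, now for $\p$ itself, caps $|F(\p)|\le 1$, and non-compactness forces $F(\p)$ to be exactly one point.

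The main technical obstacle I foresee is the cycle step. Care is needed to verify that the cyclic product of complex-valued angular derivatives genuinely telescopes to the positive-real quantity $(\p^p)'(\zeta_1)$, and that $\p^p$ legitimately falls under the hypotheses of the Denjoy-Wolff theorem -- both of which rely on $\p\in\A$ excluding finite Blaschke products and hence all automorphisms. Once that bookkeeping is settled, the Denjoy-Wolff uniqueness of boundary fixed points of small angular derivative finishes the argument.
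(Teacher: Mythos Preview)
Your proposal is correct and follows essentially the same approach as the paper: both use \cref{unique j} to multiply the derivative relations around a $\p$-cycle, obtain $(\prod_i \p'(\zeta_i))^{n-1}=1$, and then invoke Denjoy--Wolff uniqueness for the iterate to force all cycles to be trivial. The only cosmetic differences are that you verify positivity of $\prod_i \p'(\zeta_i)$ by explicitly telescoping the unimodular factors $\bar\zeta_i\zeta_{i+1}$ (whereas the paper just observes that $\p_{(k)}'(\zeta_j)>0$ since $\zeta_j$ is a boundary fixed point of $\p_{(k)}$), and you organize the forward direction as a direct argument rather than a proof by contradiction on $|F(\p)|>1$.
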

\begin{proof} First note that for the case where $F(\p)$ is empty we have that $E(\p)$ is empty.
Thus, as noted in \cref{the class A}, $C_\p$ is compact and so trivially essentially normal in this case. For the case that $F(\p)$ contains one point, i.e.\ $| F(\p)| = 1$, we see that the statement is true by using \cref{sum decomposition into basics} to reduce to the basic case considered in \cref{basic essential normality}.

Now suppose that $|F(\p)| > 1$ and, in order to obtain a contradiction, suppose that $C_\p$ is essentially normal. By \cref{unique j} $\p$ permutes the points in $F(\p)$ and so decomposes $F(\p)$ into disjoint cycles. If $\zeta \in F(\p)$ is a fixed point, we have by \cref{unique j} that $\p'(\zeta)^{n-1} = 1$ and since $\p(\zeta) > 0$ in this case, we get that $\zeta$ is the unique Denjoy-Wolff point of $\p$. Thus there is at most one $\p$-cycle of length $1$ in $F(\p)$. Since $|F(\p)| > 1$, we must therefore have a $\p$-cycle $(\zeta_1,...,\zeta_k)$ in $F(\p)$ for some $k>1$.

By \cref{unique j} we get that $\p$ has equal order of contact $n$ with $\T$ at $\zeta_1, ..., \zeta_k$ and the following equations hold
\[ \dfrac{\p'(\zeta_1)^n}{\p'(\zeta_2)} = \dfrac{c_1}{c_2} , \qquad ... \qquad
\dfrac{\p'(\zeta_{k-1})^n}{\p'(\zeta_k)} = \dfrac{c_{k-1}}{c_k} , \qquad
\dfrac{\p'(\zeta_{k})^n}{\p'(\zeta_1)} = \dfrac{c_{k}}{c_1} ,
\]
where $c_j$ is a non zero constant uniquely determined by $D_n(\p, \zeta_j)$. Taking the product of the above equations yields
\[ \dfrac{\p'(\zeta_1)^n \cdot \p'(\zeta_2)^n \cdot ... \cdot \p'(\zeta_k)^n}
         {\p'(\zeta_2) \cdot ... \cdot \p'(\zeta_k) \cdot \p'(\zeta_1)} =
\dfrac{c_1 \cdot c_2 \cdot ... \cdot c_k}{c_2 \cdot ... \cdot c_k \cdot c_1},
\]
and so we get that
\begin{align} \label{power of product is 1}
(\p'(\zeta_1) \cdot ... \cdot \p'(\zeta_k))^{n-1} = 1.
\end{align}
Since $(\zeta_1, ..., \zeta_k)$ is a cycle under $\p$, for each $j = 1, ..., k$, the $k$-fold iterate $\p_{(k)}$ of $\p$ fixes $\zeta_j$ and satisfies  $\p_{(k)}'(\zeta_j) = \p'(\zeta_1) \cdot ... \cdot \p'(\zeta_k)$ by the chain rule. It follows that $\p_{(k)}'(\zeta_j) > 0$ and by \cref{power of product is 1} $(\p_{(k)}'(\zeta_j))^{n-1} = 1$. Thus $\p_{(k)}'(\zeta_j) = 1$ and $\zeta_1, ..., \zeta_k$ all qualify as the unique Denjoy-Wolff point of $\p_{(k)}$, and we have reached a contradiction.
\end{proof}

\begin{cor}\label{ess normality cor} Let $\p$ be a self-map of $\D$ which extends analytically to a neighborhood of $\ClD$. Then $C_\p$ is non-trivially essentially normal if and only if there exists $\zeta \in \T$ such that $\p$ fixes $\zeta$, $\p'(\zeta) = 1$, and $\p$ maps $\T \setminus \{ \zeta \}$ into $\D$.
\end{cor}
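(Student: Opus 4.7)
The plan is to reduce to Theorem \ref{ess normality criterion} via Proposition \ref{A is big}. Since $\p$ is analytic on a neighborhood of $\ClD$, the function $\theta\mapsto|\p(e^{i\theta})|^2-1$ is real-analytic on $\T$, so its zero set is either all of $\T$ (equivalently, $\p$ is a finite Blaschke product) or a finite subset of $\T$. In the second case, Proposition \ref{A is big} places $\p$ in $\A$ and identifies $F(\p)$ with this finite zero set.

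The backward implication is then almost immediate: the hypotheses on $\zeta$ force $\p$ into the second case with $F(\p)=\{\zeta\}$, whereupon Theorem \ref{ess normality criterion} gives non-trivial essential normality of $C_\p$. For the forward implication, assume $C_\p$ is non-trivially essentially normal. I would first rule out that $\p$ is a finite Blaschke product. For degree one, $\p$ is a M\"obius automorphism of $\D$, and the Bourdon-Levi-Narayan-Shapiro characterization recalled in the introduction excludes this (a M\"obius automorphism is not a parabolic non-automorphism). For Blaschke products of degree $d\geq 2$, I would argue directly that $[C_\p^*,C_\p]$ cannot be compact; the model case $\p(z)=z^d$ is transparent, since there $C_\p$ is an isometry whose range has infinite codimension, so $[C_\p^*,C_\p]$ is a projection onto an infinite-dimensional subspace. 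Once $\p$ is known not to be a Blaschke product, Proposition \ref{A is big} puts $\p\in\A$; non-compactness of $C_\p$ gives $F(\p)\neq\emptyset$; Theorem \ref{ess normality criterion} supplies a unique $\zeta\in F(\p)$ with $\p(\zeta)=\zeta$ and $\p'(\zeta)=1$; and the equality $F(\p)=\{e^{i\theta}:|\p(e^{i\theta})|=1\}$ gives $\p(\T\setminus\{\zeta\})\subset\D$.

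The step I expect to require the most care is the exclusion of finite Blaschke products of degree $d\geq 2$: naively conjugating by a disk automorphism to reduce to the case $\p(0)=0$ is delicate, since essential normality is not preserved by arbitrary similarities. A cleaner route is to use the Toeplitz identity $C_\p^*C_\p=T_h$ with $h(w)=\sum_{\p(z)=w}1/|\p'(z)|$ together with the pointwise adjoint formula of \cref{rational inducing map} to analyze $C_\p C_\p^*$ and show the self-commutator is non-compact for any non-M\"obius inner $\p$. All other steps are routine applications of the machinery already developed.
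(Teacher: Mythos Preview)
Your reduction to \cref{ess normality criterion} via \cref{A is big} is exactly the paper's route, and your handling of the backward implication and of the case $F(\p)=\emptyset$ matches. The only substantive divergence is the finite Blaschke product case of degree $d\ge 2$: the paper does not argue this directly but simply cites a result of Hamada (\cite{HAMADA}) stating that such $C_\p$ are never essentially normal, whereas you propose to prove it from scratch using $C_\p^*C_\p=T_h$ and the adjoint formulas of \cref{rational inducing map}.

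Your sketch for this step is not yet a proof. The identity $C_\p^*C_\p=T_h$ is correct for finite Blaschke products, but you still need to control $C_\p C_\p^*$ well enough to show the difference is non-compact, and the pointwise adjoint machinery of \cref{rational inducing map} is set up for maps in $\A$ with isolated boundary contact, not for inner functions where every boundary point is a contact point; you would have to develop a separate analysis. Your observation about $z^d$ (isometry with infinite-codimensional range) is clean, but as you yourself note, passing from $\p(0)=0$ to general $\p$ is not automatic since essential normality is not a similarity invariant. So either supply a complete argument here or, as the paper does, invoke the literature; the rest of your proposal is correct and coincides with the paper's proof.
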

\begin{proof} We can assume that $\p$ is not linear fractional. If $\p$ is a finite Blaschke product of degree at least $2$, then $C_\p$ is not essentially normal by \cite{HAMADA}. Otherwise, by \cref{A is big}, $\p \in \A$. If $F(\p) = E(\p)$ is empty, then $C_\p$ is compact (as noted in \cref{the class A}) and so trivially essentially normal. Otherwise, the claim follows from \cref{ess normality criterion}.
\end{proof}

\subsection{Construction of Essentially Normal Composition Operators}
$ $

We combine the criterion for essential normality given in \cref{ess normality criterion} with several results from \cref{The CF Problem} to construct essentially normal composition operators which have arbitrary even order of contact with $\T$ at a point $\zeta$.

Recall from \cref{The CF Problem - section} that for a matrix $A = \begin{bmatrix} a_{11} & a_{12}\\ a_{21} & a_{22} \\ \end{bmatrix}$, we denote the corresponding linear fractional transformation by $L[A]$, so that $L[A]h = \dfrac{a_{11}h+a_{12}}{a_{21}h+a_{22}}$,
and use this notation to express the augmentation $f$ of $g$ by $a_0, a_1$ by $f(z) = L[A(a_0, a_1)(z)]g(z)$, where $A(a_0, a_1)(z)$ is defined by
\[ A(a_0, a_1)(z) = \begin{bmatrix} a_0 a_1 z &  - a_0 - a_1 z \\  a_1 z & -1 \\ \end{bmatrix}. \]
Additionally, recall from \cref{order of contact subsection} that for $\alpha \in \T$, the map $ \tau_\alpha \colon z \mapsto i\frac{\alpha-z}{\alpha+z}$ is a conformal map which we use to transfer $\D$ to $\U$.

\begin{thm} Let $\p$ be a rational self-map of $\D$ which extends analytically to a neighborhood of $\ClD$. Then $C_\p$ is non-trivially essentially normal if and only if $\p$ is of the form $\p = \tau_\zeta^{-1} \circ f \circ \tau_\zeta$ for some $\zeta \in \T$ with
\[ f(z) = L[A(0, 1)(z) A(s_1, t_1)(z) \cdot \cdot \cdot A(s_{m-1}, t_{m-1})(z)] w(z), \]
for some $n = 2m$, $s_1,..., s_{m-1} \in \R$, $t_1,...,t_{m-1} > 0$ and
$w$ a rational self-map of $\U$ which maps $\RH$ into $\U$.

Furthermore, this representation is unique, $\zeta \in \T$ is the fixed point of $\p$ and $n = 2m$ is the order of contact of $\p$ with $\T$ at $\zeta$.
\end{thm}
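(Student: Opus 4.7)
The plan is to combine \cref{ess normality cor}, which characterizes essentially normal $C_\p$ for $\p$ rational and analytic across $\ClD$ via boundary behavior, with the parametrization of order-of-contact-$n$ maps in $\Pz$ from \cref{param of contact n}, passing between $\D$ and $\U$ via the conformal map $\tau_\zeta$.

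For the forward direction, assume $C_\p$ is non-trivially essentially normal. By \cref{ess normality cor} there is $\zeta \in \T$ with $\p(\zeta)=\zeta$, $\p'(\zeta)=1$, and $\p(\T\setminus\{\zeta\})\subset\D$. Set $f := \tau_\zeta \circ \p \circ \tau_\zeta^{-1}$; a chain rule computation gives $f \in \Pz$ with $f(0) = 0$ and $f'(0) = 1$. Since \cref{A is big} places $\p$ in $\A$ with some even order of contact $n = 2m$ at $\zeta$, $f$ has order of contact $n$ with $\R$ at $0$. Applying \cref{param of contact n} then produces the matrix-product representation; because $(f(0), f'(0)) = (0, 1)$, the leftmost matrix is forced to be $A(0, 1)$, and the remaining real parameters $s_k, t_k > 0$ arise from the iterated reduction procedure.

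It remains to verify that the inner factor $w$ is a rational self-map of $\U$ sending all of $\RH$ into $\U$. Rationality of $w$ will follow from rationality of $f$ because each reduction is an algebraic manipulation of rational functions; the condition $w(0) \in \U$ is already built into \cref{param of contact n}; and for $x \in \R \setminus \{0\}$, the observation in \cref{basic functions} that reduction preserves the sign of $\im$ transfers $f(\R\setminus\{0\})\subset\U$ to $w(\R\setminus\{0\})\subset\U$. The one point not covered by that observation is $\infty$, where I would compute $f(\infty) = \tau_\zeta(\p(-\zeta)) \in \U$ and then apply the reduction formula together with the elementary fact that $-1/a \in \U$ whenever $a \in \U$ to conclude inductively that $w(\infty) \in \U$.

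For the reverse direction, given $\p$ in the displayed form, $m$ augmentations of $w$ produce a rational $f \in \Pz$ with $f(0) = 0$, $f'(0) = 1$ and order of contact exactly $2m$ at $0$ by repeated use of \cref{Taylor coeff relation}\ref{TCR2}. Because $w$ maps all of $\RH$ into $\U$ and augmentation preserves the analogous $\U$-valued behavior on $\RH \setminus \{0\}$, $f$ sends $\RH \setminus \{0\}$ into $\U$. Translating back, $\p = \tau_\zeta^{-1} \circ f \circ \tau_\zeta$ fixes $\zeta$ with derivative $1$, maps $\T \setminus \{\zeta\}$ into $\D$, has no poles on $\ClD$, and therefore extends analytically across $\T$; \cref{ess normality cor} now yields essential normality. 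Uniqueness of $\zeta$ follows from uniqueness of the Denjoy-Wolff point, uniqueness of $n$ from order of contact being a well-defined invariant, and uniqueness of $(s_k, t_k)$ and $w$ from the uniqueness clause in \cref{param of contact n}. The main obstacle I expect is the bookkeeping at $\infty$ in each direction, which is not explicitly covered by the preservation-of-boundary-behavior observation in \cref{basic functions} and requires a short direct calculation with the reduction/augmentation formulas.
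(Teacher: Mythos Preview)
Your proposal is correct and follows essentially the same route as the paper: both directions hinge on \cref{ess normality cor} to reduce to the boundary conditions at $\zeta$, then transfer to $\U$ via $\tau_\zeta$ and invoke \cref{param of contact n} together with the observation in \cref{basic functions} on how reduction/augmentation preserves rationality and the sign of $\im$ on $\RH\setminus\{0\}$. You are in fact slightly more careful than the paper about the point $\infty$, which the paper folds into its citation of that observation without separate comment; your inductive handling of $w(\infty)$ via the reduction formula is a valid way to close that small gap.
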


\begin{proof}
For the first direction, suppose that $C_\p$ is non-trivially essentially normal. By \cref{ess normality cor}, there exists $\zeta \in \T$ such that $\p$ fixes $\zeta$ and $\p'(\zeta) = 1$, and $\p$ maps $\T \setminus \{ \zeta \}$ into $\D$. 
Note that for $\zeta \neq \eta \in \T$, $f = \tau_\eta \circ \p \circ \tau_\eta^{-1}$ does not fix $0$ and so is not of the above form. The properties of $\p$ ensure that $f = \tau_\zeta \circ \p \circ \tau_\zeta^{-1}$ is a rational self-map of $\U$ which fixes $0$, satisfies $f'(0) = 1$ and maps $\RH \setminus \{ 0 \}$ into $\U$. Note that since $\p$ is analytic in a neighborhood of $\zeta$, $\p$ has order of contact $n = 2m$ with $\T$ at $\zeta$, and so $f$ has order of contact $n = 2m$ with $\R$ at $0$, for some positive integer $m$. By \cref{param of contact n} we see that $f$ has a unique representation in the form
\[ f(z) = L[A(s_0, t_0)(z) \cdots A(s_{m-1}, t_{m-1})(z)] w(z), \]
where the self-map $w$ of $\U$ satisfies $w(0) \in \U$ and $s_1,..., s_{m-1} \in \R$, $t_1,...,t_{m-1} > 0$ are uniquely determined by $f$. Note that $f(0)=0$ and $f'(0) = 1$, so that $s_0 = 0$ and $t_0 = 1$. Since $w$ can be obtained from $f$ by taking $m$ reductions, and by the observation in \cref{basic functions}, $w$ is a rational function mapping $\RH \setminus \{ 0 \}$ into $\U$.

For the second direction, let $\p$ be of the above form. By \cref{param of contact n}, $f$ has order of contact $n$ with $\R$ at $z=0$, and so $\p$ has order of contact $n$ with $\T$ at $\zeta$. Note that $f$ is obtained by taking $m$ augmentations of $w$ and so by the observation in \cref{basic functions} and our assumptions on $w$, we get that $f$ is rational and maps $\RH \setminus \{ 0 \}$ into $\U$. Since the last augmentation performed to obtain $f$ has parameters $0$ and $1$, $f$ satisfies $f(0) = 0$ and $f'(0) = 1$. The properties of $f$ ensure that $\p = \tau_\zeta^{-1} \circ f \circ \tau_\zeta$ is a rational self-map of $\D$ which extends analytically to a neighborhood of $\ClD$ and has order of contact $n$ with $\T$ at $\zeta$, fixes $\zeta$ and satisfies $\p'(\zeta)=1$. By \cref{ess normality cor}, the operator $C_\p$ is non-trivially essentially normal.
\end{proof}

Note that if $w$ is chosen to be a constant function in the formula above, then $\p$ is a degree $m$ rational self-map of $\D$ with order of contact $n = 2m$ with $\T$ at $\zeta$ which induces an essentially normal composition operator.

\paragraph{Example} Let $\zeta \in \T$, $s \in \R$, $t >0$ and $w \in \U$ and define $f(s, t, w)$ by
\begin{align*}
f(s, t, w)(z) &= L[A(0, 1)(z) A(s, t)(z)] w \\
&= L \left[ \begin{bmatrix} 0 & -z \\ z & -1 \end{bmatrix} \begin{bmatrix} s z & -s -t z \\ t z & -1 \end{bmatrix} \right] w =
\dfrac{-t z^2 w + z}{(s t z^2 - t z)w -s z - t z^2 +1}.
\end{align*}
Then for any $\zeta \in \T$, $\p(\zeta, s, t, w) = \tau_\zeta^{-1} \circ f(s, t, w) \circ \tau_\zeta$
has order of contact $4$ with $\T$ at $\zeta$ and induces a non trivially essentially normal composition operator. As a concrete example, we calculate $\p(1,0,1,i)(z) = \dfrac{z^2+2z+1}{z^2-2z+5}$.

\bibliographystyle{abbrv} \renewcommand{\bibname}{Bibliography} 
\bibliography{KatzEssNormalPreprint}

\end{document}